\def\l@subsection{\@tocline{2}{0pt}{2.5pc}{5pc}{}}
\definecolor{color1}{RGB}{255,229,204}
\definecolor{color1a}{RGB}{50,50,50}
\definecolor{color2}{RGB}{178,255,102}
\definecolor{color3}{RGB}{255,153,51}
\definecolor{color4}{RGB}{255,0,0}
\definecolor{color5}{RGB}{102,0,0}
\definecolor{color6}{RGB}{51,102,0}
\definecolor{color7}{RGB}{0,153,153}
\definecolor{color8}{RGB}{0,255,255}
\definecolor{color9}{RGB}{127,0,255}
\definecolor{color10}{RGB}{255,102,255}
\definecolor{color11}{RGB}{166,144,210}
\newcommand{\A}{\mathcal{A}}
\newcommand{\ZZ}{\mathbb{Z}}
\newcommand{\KK}{\mathbb{K}}
\newcommand{\RR}{\mathbb{R}}
\newcommand{\PP}{\mathbb{P}}
\newcommand{\CC}{\mathbb{C}}
\newcommand{\QQ}{\mathbb{Q}}
\newcommand{\dd}{\delta}
\newcommand{\cS}{\mathcal{S}}
\newcommand{\cJ}{\mathcal{J}}
\newcommand{\HH}{\mathcal{H}}
\newcommand{\defect}{\mathbf{def}}
\newcommand{\mult}{\mathbf{m}}
\newcommand{\midp}{\mathrm{mid}}
\newcommand{\HOT}{(\star\star\star)}
\DeclareMathOperator{\codim}{codim}
\DeclareMathOperator{\trop}{Trop}
\DeclareMathOperator{\vol}{Vol}
\DeclareMathOperator{\conv}{conv}
\DeclareMathOperator{\rank}{rank}
\DeclareMathOperator{\sing}{Sing}
\newtheorem{thm}{Theorem}[section]
\newtheorem{prop}[thm]{Proposition}
\newtheorem{lemma}[thm]{Lemma}
\newtheorem{cor}[thm]{Corollary}
\newtheorem{conj}[thm]{Conjecture}
\theoremstyle{definition}
\newtheorem{defn}[thm]{Definition}
\newtheorem{rem}[thm]{Remark}
\newenvironment{ex}
  {\pushQED{\qed}
  
  \oldex}
  {\popQED\endoldex}
\newcommand{\hide}[1]{}
\numberwithin{equation}{section}
\newcolumntype{g}{>{\columncolor{yellow}}c}
\title{Projective duals to algebraic and tropical hypersurfaces}
\author{Nathan Ilten}
\address{Department of Mathematics, Simon Fraser University,
8888 University Drive, Burnaby BC V5A1S6, Canada}
\email{nilten@sfu.ca}
\author{Yoav Len}
\address{School of Mathematics, Georgia Institute of Technology, Atlanta GA 30332, USA}
\email{yoav.len@math.gatech.edu}
\thanks{Both authors were supported by the Fields Institute during the \emph{Combinatorial Algebraic Geometry} special semester. The question of the relationship between tropicalization and projective dual curves was first mentioned by Kathl\'en Kohn during the introductory workshop of the semester, and was further discussed during the apprenticeship workshop. We thank Bernd Schober and Kristin Shaw for many useful conversations and comments. In particular, Schober introduced us to Kouchnirenko's result relating Milnor and Newton numbers, and Shaw gave us insight into computing tropical multiplicities. We also thank the anonymous referee for helpful suggestions.}
\begin{document}

\begin{abstract}
We study a tropical analogue of the projective dual variety of a hypersurface. When $X$ is a curve in $\PP^2$ or a surface in $\PP^3$, we  provide an explicit description of $\trop(X^*)$ in terms of $\trop(X)$, as long as $\trop(X)$ is smooth and satisfies a mild genericity condition. 
As a consequence, when $X$ is a curve we describe the transformation of Newton polygons under projective duality, and recover classical formulas for the degree of a dual plane curve.
For higher dimensional hypersurfaces $X$, we give a partial description of $\trop(X^*)$.
\end{abstract}
\maketitle

\tableofcontents 

\section{Introduction}
\subsection{Setting}
Let $X=V(f)\subset \PP^n$ be an irreducible hypersurface of degree $d>1$. The \emph{projective dual} $X^*\subset (\PP^n)^*$ is defined as the Zariski closure in $(\PP^n)^*$ of those hyperplanes $H\subset \PP^n$ such that $H$ is tangent to $X$ at a smooth point \cite[\S 1]{tevelev}. This will be also be a  hypersurface, unless $X$ is covered by lines.
The dual variety $X^*$ captures many interesting aspects of the geometry of $X$. For example, when $X$ is a plane curve with only nodes and cusps, Pl\"ucker's formula relates the degree of $X^*$ to the singularities of $X$:
\begin{equation}\label{eqn:pluecker}
\deg X^*=d(d-1)-2\alpha-3\beta,
\end{equation}
where $\alpha$ is the number of nodes and $\beta$ is the number of cusps.
Dual varieties appear in a wide array of applications, ranging from the study of hypergeometric differential equations \cite{gkz} to optimization \cite{optimization}.

In this paper, we will be working over an algebraically close field $\KK$ of characteristic zero with a non-trivial non-Archimedean valuation $\nu$ which takes trivial values on the integers. Via $\nu$, we have a tropicalization map
\begin{align*}
\trop:(\KK^*)^n&\to \RR^n\\
(x_1,\ldots,x_n)&\mapsto(-\nu(x_1),\ldots,-\nu(x_n)).
\end{align*}
Given a projective variety $Y\subset \PP^n$, its \emph{tropicalization} $\trop(Y)$ is the closure of the image of $Y\cap (\KK^*)^n$ under the tropicalization map. This is a polyhedral complex in $\RR^n$ which remembers many important features of $Y$, including its dimension and degree, see for example \cite{tropical}.

With $X\subset \PP^n$ a hypersurface as above, how does $\trop(X^*)$ depend on $\trop(X)$? We will always assume that $X$ is not a cone, that is, the defining polynomial $f$ of $X$ involves all $n+1$ variables. We make this assumption since otherwise $X^*$ does not meet the torus of $(\PP^n)^*$.
Z.~Izhakian showed that as long as the lowest valuation part of the coefficients of $f$ are sufficiently generic, then $\trop(X^*)$ can be determined directly from $\trop(X)$ \cite[\S 2.1]{izhakian}. The argument can be summarized as follows: the coefficients of the polynomial defining $X^*$ are themselves polynomials in the coefficients of $f$; genericity assumptions ensure that no cancellation occurs, and all arithmetic can be done at the ``tropical'' level. While this argument does guarantee that $\trop(X^*)$ can be recovered from $\trop(X)$, it is not so explicit. However, Izhakian then gives an explicit description in the case of quadric plane curves.

In this paper,  we give explicit combinatorial descriptions of $\trop(X^*)$ in terms of $\trop(X)$ in the cases when $\trop(X)$ is smooth, and $X$ is either a curve or a surface. More precisely, given $p\in \trop(X)$, we say that $q\in \RR^n$ is a \emph{tropical tangent} to $\trop(X)$ at $p$ if there is a smooth point $x\in X\cap (\KK^*)^n$ such that $\trop(x)=p$, and the tangent hyperplane to $X$ at $x$ tropicalizes to $q$ (see Definition \ref{def:tangent}).
The copy of $\RR^n$ containing $q$ is tropically dual to the copy of $\RR^n$ containing $\trop(X)$ in the sense that points of the former correspond to tropical hyperplanes in the latter (and vice versa).
We will show that $\trop(X)$ admits an explicit polyhedral subdivision, such that the collection of tropical tangents to points in the interior of every cell $P$  form a polyhedral complex. Taking the union of these polyhedra, we obtain $\trop(X^*)$. We will also give partial descriptions of $\trop(X^*)$ for higher dimensional hypersurfaces.

Although we will not use most of this terminology in the remainder of the paper, we now explain our approach to the above problem from a conceptual point of view. Consider the conormal variety
\[
	W_X=\overline{\{(x,H)\in\PP^n\times(\PP^n)^*\ |\ x\in X\ \textrm{smooth},\ H\ \textrm{tangent to}\ X\ \textrm{at}\ x\}},
\]
which comes equipped with  projections to $X=V(f)$ and $X^*$. Tropicalizing, we have maps
\[
	\begin{tikzcd}
		&\trop(W_X)\arrow[rd]\arrow[ld]\\
		\trop(X)& & \trop(X^*)
	\end{tikzcd}
\]
whose images are dense in the Euclidean topology.
Hence, we may understand $\trop(X^*)$ by describing $\trop(W_X)$ and then projecting.
While equations for $W_X$ may be easily derived from $f$, they do not form a tropical basis. However, when $X$ is a curve or a surface, we are able to produce a tropical basis for $W_X$ by considering appropriate linear combinations of the ``natural'' equations defining $W_X$. We show that these linear combinations form a tropical basis by using the lifting results of B.~Osserman and S.~Payne \cite{lifting}.

In the remainder of this introduction, we give precise statements for our results on plane curves (\S\ref{sec:curvesintro}) and surfaces and beyond (\S\ref{sec:surfacesintro}), and mention related work (\S\ref{sec:related}). In \S\ref{sec:tangent} we phrase the problem of finding tropical tangents for a hypersurface in terms of solving a system of polynomial equations. We manipulate this system in \S\ref{sec:consistency} to obtain necessary conditions on the set of tropical tangents, and show in \S\ref{sec:lifting} that these conditions are sometimes sufficient, in particular, for situations arising for curves and surfaces.
In \S\ref{sec:NOT}, we analyze the valuations of monomials in $f$ that are not minimal. We then describe tropical tangents at points in an edge of a tropical variety in \S\ref{sec:edges}.
In \S\ref{sec:mult}, we outline a strategy for calculating tropical multiplicities for $\trop(X)$, and do this in several examples.
In \S\ref{sec:curves} we prove our results for curves, and finally in \S\ref{sec:surfaces} we prove our results for surfaces.
We conclude in \S\ref{sec:conclusion} with some thoughts on future research directions.

\subsection{Results: plane curves}\label{sec:curvesintro}
Let $C=\trop(X)\subset \RR^2$ be a smooth tropical curve of degree at least $2$. This already comes equipped with the structure of a polyhedral complex, dual to the subdivision of the Newton polytope of $f$ induced by the valuations of the coefficients of $f$. Zero-dimensional cells in this complex will be called \emph{vertices}, and one-dimensional cells will be called \emph{edges}. Let $e_1,e_2$ be the standard basis for $\RR^2$, and $e_0=-e_1-e_2$. We are using the coordinates $x_1/x_0,x_2/x_0$ on the torus $(\KK^*)^2\subset \PP^2$.

First we give a complete description of $\trop(X^*)$:
\begin{thm}[See \S \ref{sec:curves}]\label{thm:dualcurve}
The tropicalization of the dual curve $\trop(X^*)\subset \RR^2$, with multiplicities, is the union of the following:
\begin{enumerate}
	\item $-E$, where $E$ is an edge of $C$ not parallel to $e_0$, $e_1$, or $e_2$;
	\item $-p-\RR_{\geq 0} \cdot e_i$, where $p$ is a vertex of $C$ with no adjacent edge parallel to $e_i$;
	\item $-E-\RR_{\geq 0} \cdot e_i$ with multiplicity $2$, where $E$ is an edge of $C$ parallel to $e_i$ which is bounded in direction $-e_i$.
\end{enumerate}
\end{thm}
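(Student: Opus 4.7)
My plan is to combine the tropical-tangent machinery developed earlier in the paper with a case analysis over the cells of $C$. By the conormal-variety perspective of the introduction, $\trop(X^*)$ coincides, as a weighted polyhedral set, with the closure of the set of tropical tangents to $C$; so it suffices to compute these tangents on each cell of $C$. Since $C$ is one-dimensional, the cells are vertices and relative interiors of edges, and I further split edges according to whether they are parallel to one of $e_0, e_1, e_2$.

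Fix first an edge $E$ of $C$ not parallel to any $e_i$, dual to an edge $\bar{E}$ of the Newton subdivision. At a point $p$ in the relative interior of $E$, the initial form $\mathrm{in}_p f$ is supported on $\bar{E}$, and since $\bar{E}$ is not perpendicular to any coordinate direction, each partial derivative $\partial_i f$ retains two leading monomials with nonzero coefficient. The consistency analysis of \S\ref{sec:consistency} then pins down a single tropical tangent $q = -p$; the lifting theorem of \S\ref{sec:lifting} certifies sufficiency, and the multiplicity-one count comes from \S\ref{sec:mult}. As $p$ varies over $E$, this produces $-E$, which is case~(1).

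The remaining contributions come from edges parallel to some $e_i$ and from vertices. When $E$ is parallel to $e_i$, the dual edge $\bar{E}$ is perpendicular to $e_i$ and one of the partials of $f$ loses its leading terms along $\bar{E}$; invoking the higher-order-term analysis of \S\ref{sec:NOT} together with the edge-point study of \S\ref{sec:edges}, the system of tropical-tangent equations at $p$ acquires a one-parameter family of solutions tracing out $-p - \RR_{\geq 0}\,e_i$ with multiplicity two. Sweeping $p$ over $E$ then yields $-E - \RR_{\geq 0}\,e_i$, which is a genuine ray because $E$ is bounded in direction $-e_i$; this is case~(3). At a vertex $p$ of $C$, the dual cell is a unimodular Newton triangle and $\mathrm{in}_p f$ is a trinomial; the tropical-tangent system parametrises its solutions by a tropical line (tripod) based at $-p$, with rays $-p - \RR_{\geq 0}\,e_i$ for $i = 0, 1, 2$. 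Whenever $p$ is an endpoint of an edge parallel to $e_i$, the corresponding ray is already supplied by case~(3); the remaining rays populate case~(2).

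The main obstacle is the analysis of edges parallel to a coordinate direction. There, the leading partial derivatives vanish along $\bar{E}$, so one must invoke higher-order monomial data to detect the extra ray, and appeal to the Osserman--Payne lifting theorem to promote the necessary tropical conditions to sufficient ones; the multiplicity-two count requires a separate computation via the recipe of \S\ref{sec:mult}. A secondary subtlety is that the vertex and parallel-edge contributions must glue without double-counting, which is built into the statement: case~(3) supplies its ray starting at $-p$ precisely when $E$ is bounded in direction $-e_i$ at $p$, exactly absorbing the ray the vertex tripod would otherwise contribute in that direction.
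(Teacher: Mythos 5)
Your overall plan — run the tropical‑tangent machinery of \S\ref{sec:consistency}--\S\ref{sec:mult} on each cell of $C$ and then assemble — is exactly the route the paper takes, and your treatment of non‑standard edges and of the multiplicity recipe is on the right track. But there is a genuine error in the vertex case.

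You assert that at a smooth vertex $p$ the tropical tangents form a full tropical line (tripod) based at $-p$, with rays $-p-\RR_{\geq 0}e_i$ for all $i=0,1,2$, and that rays in ``occupied'' directions are afterward absorbed by case (3). That is not what the consistency analysis gives. Writing $\A(p)=\{u,v,w\}$, the row $M_i^{(0)}=(u_i,v_i,w_i)$, together with $M_{-1}^{(0)}=(1,1,1)$, lies in the row span iff two of $u_i,v_i,w_i$ coincide — equivalently, iff some edge at $p$ is parallel to $e_i$. In that case $\{-1,i\}$ is \emph{not} consistent, so $-p-\RR_{>0}e_i$ is not a tropical tangent at $p$; this is precisely the content of Theorem~\ref{thm:vert} (via Lemma~\ref{lemma:geco}). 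So the vertex contributes only the rays $-p-\RR_{\geq 0}e_i$ for those $i$ with no adjacent parallel edge, not a tripod. The ``absorption'' picture is therefore backwards: there is nothing at the vertex to absorb, and case (3) does not depend on it.

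This is not a harmless simplification: it changes the answer. Take a smooth tropical conic whose four vertices are the central vertex $v_{\mathrm{mid}}$ and three outer vertices $v_1,v_2,v_3$, where each outer vertex has two unbounded rays in directions $-e_i, -e_j$ and one bounded edge in direction $-e_k$. Theorem~\ref{thm:dualcurve} gives $\trop(X^*)$ as the standard tripod at $-v_{\mathrm{mid}}$ (each edge with multiplicity 2), while your tripod claim would additionally produce the rays $-v_1-\RR_{\geq 0}e_i$ and $-v_1-\RR_{\geq 0}e_j$ emanating from $-v_1\neq -v_{\mathrm{mid}}$, which do not lie in $\trop(X^*)$; they are \emph{not} supplied by case~(3) because the adjacent edges are unbounded in direction $-e_i$, $-e_j$. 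It would also disturb the multiplicity computation: Lemma~\ref{lem:weight} sums over cells of $\trop(W_X)$ mapping to a given cell of $\trop(X^*)$, so a phantom vertex cell $(p,-p)-\RR_{\geq 0}(0,e_i)$ in $\trop(W_X)$ would incorrectly inflate the weight of the ray beyond $2$. Finally, the paper's proof also handles a separate loose end that your sketch omits: tangencies at boundary points of $X$ (outside the dense torus), which are disposed of by a dimension argument before concluding that the described closed set is all of $\trop(X^*)$.
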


\begin{ex}\label{ex:dualCurveExample}
In all examples, we  work over the field $\KK=\CC\{\{t^*\}\}$ of Puiseux series.
Consider the cubic plane curve
	
\[
X_1=V(x_0^2x_1+x_0x_2^2+x_0x_1x_2+tx_1^2x_2+t^2x_1x_2^2)\subset \PP^2.
\]
Its tropicalization  is the tropical curve appearing on the left side of Figure \ref{fig:dualCurveExample}, and its tropical dual is the curve on the right side of the figure. We have also included the curve $-\trop(X_1)$, since we find it easier to visually obtain $-\trop(X_1^*)$ from $\trop(X_1)$; note the sign changes in Theorem \ref{thm:dualcurve}. For convenience, every edge in the dual picture has the same color as the edge or vertex that it is dual to. Thick edges in the dual picture represent edges of multiplicity $2$.

\begin{figure}
\centering
\begin{tikzpicture}[scale=.5]

{\tiny \node [above left] at (0,0) {$(0,0)$};}
\draw[fill] (0,0) circle [radius=0.05];

\draw[color1a] (-2,-1) -- (0,0);
\draw[color2] (0,0) -- (1,1);
\draw[color3] (0,0) -- (1,0);
\draw[color4] (1,0) -- (1,1);
\draw[color5] (1,0) -- (2,-1);
\draw[color6] (1,1) -- (2,3);
\draw[color7] (2,3) -- (2,4);
\draw[] (2,3) -- (3,4);

\draw[fill, color8] (0,0) circle [radius=0.1];
\draw[fill, color9] (1,0) circle [radius=0.1];
\draw[fill, color10] (1,1) circle [radius=0.1];
\draw[fill, color11] (2,3) circle [radius=0.1];

\node at (0,-3) {$\trop(X_1)$};

\begin{scope}[shift={(10,0)}]
\draw[color1a] (-2,-1) -- (0,0);
\draw[color2, line width=.7mm] (1,1) -- (-2,-2);
\draw[color3, line width=.7mm] (0,0) -- (3,0);
\draw[color4, line width=.7mm] (1,0) -- (1,4);
\draw[color5] (1,0) -- (2,-1);
\draw[color6] (1,1) -- (2,3);
\draw[color7,line width=.7mm] (2,3) -- (2,4);

\draw[color8] (0,0) -- (0,4);
\draw[color9] (1,0) -- (-1,-2);
\draw[color10] (1,1) -- (4,1);
\draw[color11] (2,3) -- (4,3);

\node at (0,-3) {$-\trop(X_1^*)$};

\end{scope}

\begin{scope}[shift={(20,0)},xscale=-1,yscale=-1]
\draw[color1a] (-2,-1) -- (0,0);
\draw[color2, line width=.7mm] (1,1) -- (-2,-2);
\draw[color3, line width=.7mm] (0,0) -- (3,0);
\draw[color4, line width=.7mm] (1,0) -- (1,4);
\draw[color5] (1,0) -- (2,-1);
\draw[color6] (1,1) -- (2,3);
\draw[color7,line width=.7mm] (2,3) -- (2,4);

\draw[color8] (0,0) -- (0,4);
\draw[color9] (1,0) -- (-1,-2);
\draw[color10] (1,1) -- (4,1);
\draw[color11] (2,3) -- (4,3);
\node at (0,5) {$\trop(X_1^*)$};

\end{scope}
\end{tikzpicture}   

\caption{A tropical curve and its dual}
\label{fig:dualCurveExample}

\end{figure}
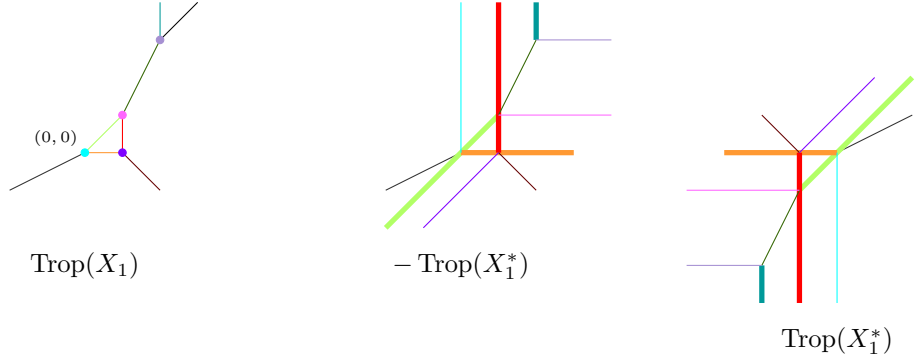

Consider instead
\[
X_2=V(x_0^2x_1+x_0x_2^2+x_0x_1x_2+t^2x_1^2x_2+tx_1x_2^2)\subset \PP^2.
\]
Its tropicalization  is the tropical curve appearing on the left side of Figure \ref{fig:dualCurveExample2}, and its tropical dual is the curve on the right side of the figure. 

Using Macaulay2 \cite{M2}, we may calculate that $X_1^*$ is cut out by the sextic
{\scriptsize{
\begin{align*}
&t^{6} {y}_{0}^{6}+(-2 t^{5}-2 t^{4}) {y}_{0}^{5} {y}_{1}+(-4 t^{6}+t^{4}+4 t^{3}+t^{2}) {y}_{0}^{4} {y}_{1}^{2}+(12 t^{4}-2 t^{2}-2 t) {y}_{0}^{3} {y}_{1}^{3}\\
&+(-12 t^{2}+1) {y}_{0}^{2} {y}_{1}^{4}+4 {y}_{0} {y}_{1}^{5}+(-2 t^{4}+4 t^{3}) {y}_{0}^{5} {y}_{2}+(4 t^{5}+4
      t^{3}-10 t^{2}) {y}_{0}^{4} {y}_{1} {y}_{2}\\
& +(8 t^{4}-26 t^{3}-2 t^{2}+8 t) {y}_{0}^{3} {y}_{1}^{2} {y}_{2}+(2 t^{2}+22 t-2) {y}_{0}^{2} {y}_{1}^{3} {y}_{2}-10 {y}_{0} {y}_{1}^{4} {y}_{2}+(2 t^{4}+t^{2}) {y}_{0}^{4} {y}_{2}^{2}\\
&+(-12 t^{3}+22 t^{2}-2 t) {y}_{0}^{3} {y}_{1}
      {y}_{2}^{2}+(-8 t^{4}-2 t^{2}-18 t+1) {y}_{0}^{2} {y}_{1}^{2} {y}_{2}^{2}+(-20 t^{2}+8) {y}_{0} {y}_{1}^{3} {y}_{2}^{2}+{y}_{1}^{4} {y}_{2}^{2}\\
&-2 t^{2} {y}_{0}^{3} {y}_{2}^{3}+(8 t^{3}+8 t) {y}_{0}^{2} {y}_{1} {y}_{2}^{3}+(8 t^{2}+22 t-2) {y}_{0} {y}_{1}^{2} {y}_{2}^{3}-2
      {y}_{1}^{3} {y}_{2}^{3}+t^{2} {y}_{0}^{2} {y}_{2}^{4}\\
 &-10 t {y}_{0} {y}_{1} {y}_{2}^{4}+(-4 t^{2}+1) {y}_{1}^{2} {y}_{2}^{4}+4 t {y}_{1} {y}_{2}^{5}
\end{align*}}}
while $X_2^*$ is cut out by the sextic
{\scriptsize{
\begin{align*}
&t^{6} {y}_{0}^{6}+(-2 t^{5}-2 t^{4}) {y}_{0}^{5} {y}_{1}+(t^{4}+t^{2}) {y}_{0}^{4} {y}_{1}^{2}+(10 t^{2}-2 t) {y}_{0}^{3} {y}_{1}^{3}+(-12 t+1) {y}_{0}^{2} {y}_{1}^{4}+4 {y}_{0} {y}_{1}^{5}\\
&+(4 t^{6}-2 t^{5}) {y}_{0}^{5} {y}_{2}+(-6 t^{4}+4 t^{3}) {y}_{0}^{4} {y}_{1}
      {y}_{2}+(-26 t^{3}+16 t^{2}-2 t) {y}_{0}^{3} {y}_{1}^{2} {y}_{2}\\
&+(22 t^{2}+2 t-2) {y}_{0}^{2} {y}_{1}^{3} {y}_{2}-10 {y}_{0} {y}_{1}^{4} {y}_{2}+(2 t^{5}+t^{4}) {y}_{0}^{4} {y}_{2}^{2}+(22 t^{4}-12 t^{3}-2 t^{2}) {y}_{0}^{3} {y}_{1} {y}_{2}^{2}\\
&+(-26 t^{2}-2 t+1){y}_{0}^{2} {y}_{1}^{2} {y}_{2}^{2}+(-20 t+8) {y}_{0} {y}_{1}^{3} {y}_{2}^{2}+{y}_{1}^{4} {y}_{2}^{2}-2 t^{4} {y}_{0}^{3} {y}_{2}^{3}\\
&+(8 t^{3}+8 t^{2}) {y}_{0}^{2} {y}_{1} {y}_{2}^{3}+(22 t^{2}+8 t-2) {y}_{0} {y}_{1}^{2} {y}_{2}^{3}-2 {y}_{1}^{3} {y}_{2}^{3}+t^{4}
{y}_{0}^{2} {y}_{2}^{4}\\
&-10 t^{2} {y}_{0} {y}_{1} {y}_{2}^{4}+(-4 t+1) {y}_{1}^{2} {y}_{2}^{4}+4 t^{2} {y}_{1} {y}_{2}^{5}.
      \end{align*}}}
Tropicalizing these two curves, we obtain the same result as provided above by the theorem.
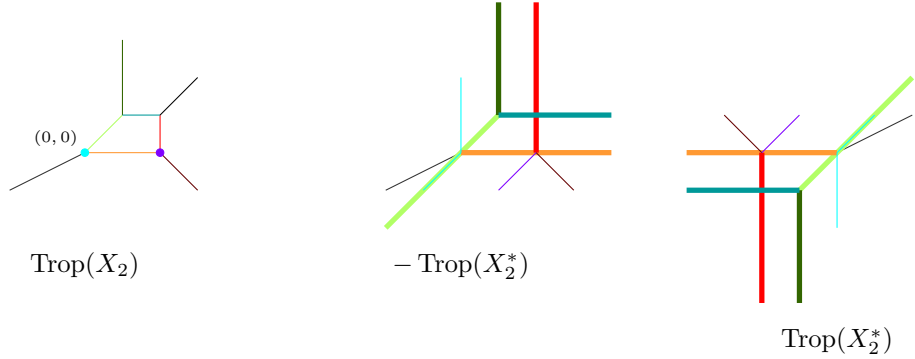
\begin{figure}
\centering
\begin{tikzpicture}[scale=.5]

{\tiny \node [above left] at (0,0) {$(0,0)$};}
\draw[fill] (0,0) circle [radius=0.05];

\draw[color1a] (-2,-1) -- (0,0);
\draw[color2] (0,0) -- (1,1);
\draw[color3] (0,0) -- (2,0);
\draw[color4] (2,0) -- (2,1);
\draw[color5] (2,0) -- (3,-1);
\draw[color6] (1,1) -- (1,3);
\draw[color7] (1,1) -- (2,1);
\draw[] (2,1) -- (3,2);

\draw[fill, color8] (0,0) circle [radius=0.1];
\draw[fill, color9] (2,0) circle [radius=0.1];

\node at (0,-3) {$\trop(X_2)$};

\begin{scope}[shift={(10,0)}]
\draw[color1a] (-2,-1) -- (0,0);
\draw[color2, line width=.7mm] (1,1) -- (-2,-2);
\draw[color3, line width=.7mm] (0,0) -- (4,0);
\draw[color4, line width=.7mm] (2,0) -- (2,4);
\draw[color5] (2,0) -- (3,-1);
\draw[color6, line width=.7mm] (1,1) -- (1,4);
\draw[color7, line width=.7mm] (1,1) -- (4,1);

\draw[color8] (0,0) -- (0,2);
\draw[color8] (0,0) -- (-1,-1);
\draw[color9] (2,0) -- (1,-1);
%\draw[] () -- ();
\node at (0,-3) {$-\trop(X_2^*)$};
\end{scope}

\begin{scope}[shift={(20,0)}, xscale=-1,yscale=-1]
\draw[color1a] (-2,-1) -- (0,0);
\draw[color2, line width=.7mm] (1,1) -- (-2,-2);
\draw[color3, line width=.7mm] (0,0) -- (4,0);
\draw[color4, line width=.7mm] (2,0) -- (2,4);
\draw[color5] (2,0) -- (3,-1);
\draw[color6, line width=.7mm] (1,1) -- (1,4);
\draw[color7, line width=.7mm] (1,1) -- (4,1);

\draw[color8] (0,0) -- (0,2);
\draw[color8] (0,0) -- (-1,-1);
\draw[color9] (2,0) -- (1,-1);
\node at (0,5) {$\trop(X_2^*)$};

\end{scope}\end{tikzpicture}   

\caption{A tropical curve and its dual}
\label{fig:dualCurveExample2}

\end{figure}
\end{ex}

While Theorem \ref{thm:dualcurve} gives a complete description of $\trop(X^*)$ in the curve case, it is interesting to know a bit more: what are the tropical tangents for any given point of $C$?
The answer is found in  \S \ref{sec:surfacesintro}, see Theorems \ref{thm:vert} and \ref{thm:edge}.

Rather than asking for the tropicalization of $X^*$, we could ask for less refined information such as the Newton polygon:
\begin{cor}[See \S\ref{sec:curves}]\label{cor:newton}
Let $\Delta_X\subset\RR^2$ be the Newton polygon of $X$, and assume that $X$ is sufficiently generic with respect to $\Delta_X$. This is in particular satisfied if $\trop(X)$ is smooth.

Label the vectors of $\Delta_X$ by $v_1,v_2,\ldots,v_m$ in counterclockwise orientation, omitting all edges parallel to $w_0=(-1,1),w_1=(0,-1),w_2=(1,0)$. For $i=0,1,2$, let $\sigma_i$ be the sum of edge vectors parallel to $w_i$. Then the edge vectors of the Newton polygon $\Delta_{X^*}$ of $X^*$ are exactly 
\[
-v_m,-v_{m-1},\ldots,-v_1
\]
along with 
\[
\vol(\Delta_X)\cdot w_i-\sigma_i
\]
where $\vol(\Delta_X)$ is the normalized lattice volume of $\Delta_X$.
\end{cor}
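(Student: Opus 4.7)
The approach exploits the standard duality between the Newton polygon of a plane curve $Y$ and the unbounded rays of $\trop(Y)$: traversing $\Delta_Y$ counterclockwise, each edge vector $v$ is dual to a collection of unbounded tropical rays whose primitive direction is the $90^\circ$ clockwise rotation of $v$ (in the $\max$ convention of this paper), with total multiplicity equal to the lattice length of $v$. Conversely, the multiset of primitive unbounded-ray directions of $\trop(Y)$ with multiplicities recovers $\Delta_Y$ up to translation. The plan is therefore to enumerate the unbounded rays of $\trop(X^*)$ using Theorem~\ref{thm:dualcurve} and read off $\Delta_{X^*}$. By the genericity hypothesis we may assume $\trop(X)$ is smooth, so Theorem~\ref{thm:dualcurve} applies, and the resulting $\Delta_{X^*}$ will depend only on $\Delta_X$.

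For any primitive direction $u$ not parallel to $e_0, e_1, e_2$, only part~(1) of Theorem~\ref{thm:dualcurve} contributes rays of $\trop(X^*)$ in direction $u$: one such ray for each unbounded edge of $C = \trop(X)$ in direction $-u$. Such unbounded edges of $C$ are in bijection with the primitive lattice segments on the boundary of $\Delta_X$ lying inside the edges $v_1,\ldots,v_m$. Thus each $v_j$ of lattice length $\ell_j$ yields $\ell_j$ rays of $\trop(X^*)$ in the common direction obtained by rotating $-v_j$ by $90^\circ$ clockwise, and these assemble into a single edge $-v_j$ of $\Delta_{X^*}$.

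The remaining rays of $\trop(X^*)$ lie in directions $-e_i$ for $i\in\{0,1,2\}$ and are dual to edges of $\Delta_{X^*}$ parallel to $w_i$. By parts~(2) and~(3) of Theorem~\ref{thm:dualcurve}, the multiplicity $N_i$ of rays in direction $-e_i$ equals the number of vertices of $C$ with no adjacent edge parallel to $e_i$, plus twice the number of edges of $C$ parallel to $e_i$ that are bounded in direction $-e_i$. To compute $N_i$, work on the unimodular subdivision of $\Delta_X$ dual to $C$: let $I_i$ denote the number of interior edges parallel to $w_i$, and let $L_i^+, L_i^-$ denote the total lattice lengths of the boundary edges of $\Delta_X$ in directions $+w_i, -w_i$ respectively, so that $\sigma_i = (L_i^+ - L_i^-)\,w_i$. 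Since each triangle has at most one side in any given direction, the triangles with a side parallel to $w_i$ number $2 I_i + L_i^+ + L_i^-$, giving $\vol(\Delta_X) - 2 I_i - L_i^+ - L_i^-$ vertices contributing via part~(2). The edges contributing via part~(3) are the bounded tropical edges dual to the interior subdivision edges parallel to $w_i$, together with the unbounded rays in direction $+e_i$ coming from boundary edges of $\Delta_X$ in direction $-w_i$, contributing $2(I_i + L_i^-)$. Summing gives $N_i = \vol(\Delta_X) - L_i^+ + L_i^-$, which is exactly the lattice length of $\vol(\Delta_X)\,w_i - \sigma_i$ in the $w_i$ direction.

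The main obstacle is the combinatorial bookkeeping of the third paragraph: distinguishing interior from boundary subdivision edges, the two orientations $\pm w_i$, and verifying that each vertex and edge of $C$ is counted correctly in parts~(2) and~(3). A final consistency check that the listed vectors sum to zero (using $w_0+w_1+w_2=0$ together with the closure relation $\sum_j v_j + \sum_i \sigma_i = 0$ for $\Delta_X$) shows they form a closed convex polygon, which by the Newton polygon--tropical ray duality must agree with $\Delta_{X^*}$.
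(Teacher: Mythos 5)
Your computation for the case when $\trop(X)$ is smooth is correct, and it follows the same strategy as the paper's Proposition~\ref{prop:newton}: read off the edges of $\Delta_{X^*}$ from the unbounded rays of $\trop(X^*)$ as supplied by Theorem~\ref{thm:dualcurve}, and count rays in each standard direction by sorting the triangles of the dual unimodular subdivision of $\Delta_X$. Your bookkeeping via $I_i$, $L_i^\pm$ is equivalent to the paper's partition $\cS=\cS'\sqcup\cS''$ and the quantity $\kappa_i=\sigma_i/w_i$, and the final count $N_i=\vol(\Delta_X)-L_i^++L_i^-$ agrees.

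There is, however, a genuine gap in passing from the smooth-tropical case to the full hypothesis ``$X$ sufficiently generic with respect to $\Delta_X$.'' Your sentence ``by the genericity hypothesis we may assume $\trop(X)$ is smooth'' does not follow: the set of hypersurfaces with smooth tropicalization is \emph{not} a Zariski open subset of the space $\HH$ of all hypersurfaces with Newton polygon $\Delta_X$ (indeed, a hypersurface with all coefficients of trivial valuation typically has a non-smooth tropicalization, yet lies in every non-empty Zariski open subset of $\HH$). What the statement actually requires is that $\Delta_{X^*}$ is constant on some non-empty Zariski open $U\subset\HH$, and that this constant value matches your formula. The paper supplies exactly this via Lemma~\ref{lemma:constant}, which proves constancy of $\Delta_{Y^*}$ on a Zariski open $U$ by a universal specialization argument (using that the defining equation of $Y^*$ specializes from a generic resultant-type polynomial $h$, and that irreducibility and non-vanishing of specializations is an open condition). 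One then observes that the set of $Y$ with $\trop(Y)$ smooth is dense enough to meet $U$, so the constant value on $U$ coincides with the formula you computed. Without some version of this specialization argument, your proof establishes the ``in particular'' sub-case but not the corollary as stated.
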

\noindent Here, the condition that $X$ is generic with respect to $\Delta_X$  means that $X$ is in a certain non-empty Zariski open subset of the space of all hypersurfaces with Newton polygon $\Delta_X$.

\begin{ex}
The Newton polygon of the curve $X_1$ from Example \ref{ex:dualCurveExample} is displayed on the left of Figure \ref{fig:NewtonPolygon}.  
Following the recipe above, or taking the Newton polygon associated to $\trop(X_1^*)$, we find that the Newton polygon of $X_1^*$ is the one on the right side of the figure.
\begin{figure}[h]
\centering
\begin{tikzpicture}[scale=.7]
\draw (1,0)--(2,1)--(1,2)--(0,2)--(1,0);
{\scriptsize
\node [below] at (1,0) {$(1,0)$};
\node [right] at (2,1) {$(2,1)$};
\node [above] at (1,2) {$(1,2)$};
\node [left] at (0,2) {$(0,2)$};
}
\begin{scope}[shift={(5,-1)}]
\draw (0,0) -- (5,0) -- (4,2) -- (1,5) -- (0,4) -- (0,0);

{\scriptsize
\node [below] at (0,0) {$(0,0)$};
\node [below] at (5,0) {$(5,0)$};
\node [right] at (4,2) {$(4,2)$};
\node [above] at (1,5) {$(1,5)$};
\node [left] at (0,4) {$(0,4)$};
}

\end{scope}
\end{tikzpicture}
\caption{The Newton polygons of $X_1$ and $X_1^*$.}
\label{fig:NewtonPolygon}
\end{figure}
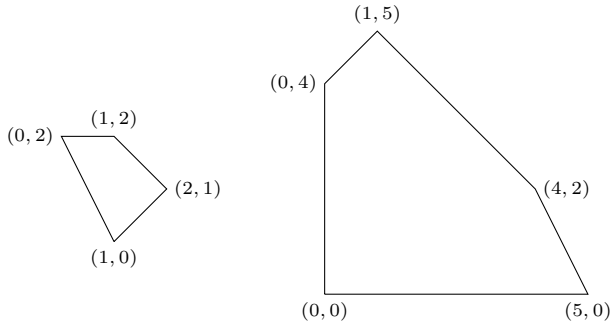
Note that even though $\trop(X_2)$ is quite different from $\trop(X_1)$, their duals have the same Newton polygons. This is to be expected since $X_1$ and $X_2$ have the same Newton polygon and have smooth tropicalizations.
\end{ex}

Using Corollary \ref{cor:newton}, we also obtain a formula for the degree of $X^*$ whenever $X$ has   singularities which are sufficiently generic with respect to its Newton polygon. We may then use this to recover Pl\"ucker's formula for the degree of the dual of a curve with nodal and cuspidal singularities \eqref{eqn:pluecker}, and more generally  to partially recover Teissier's formula for the dual degree, see Remark \ref{rem:df}.

While standard projective duality is an involution on varieties, this cannot be true in the tropical setting. However, we do show that by enhancing the dual tropical curve $\trop(X^*)$ with a bit of extra information, the original tropical curve $\trop(X)$ may be reconstructed,
as long as $\trop(X)$ was smooth. See Proposition \ref{prop:injectivity}.

\subsection{Results: surfaces and beyond}\label{sec:surfacesintro}
We now consider a smooth tropical hypersurface $\trop(X)\subset \RR^n$, where $X=V(f)\subset \PP^n$ is a hypersurface of degree at least two. As in the curve case, $\trop(X)$ comes equipped with the structure of a polyhedral complex induced by the subdivision of the Newton polytope of $f$. As above, zero-dimensional cells will be called vertices, and one-dimensional cells (possibly unbounded) will be called edges.
%We will assume that the Newton polytope of $f$ is full-dimensional, or equivalently, that $\trop(X)$ has trivial lineality space, or equivalently, that no non-trivial subtorus of $(\KK^*)^n$ acts on $X$.
We let $e_1,\ldots,e_n$ be the standard basis for $\RR^n$, with $e_0=-e_1-e_2-\ldots-e_n$. We are using the standard coordinates $x_1/x_0,\ldots,x_n/x_0$ on the torus $(\KK^*)^n$.
For any subset $J\subset \{0,\ldots,n\}$, let $\langle J \rangle$ be the span of $\{e_i\}_{i\in J}$. For any subset $S\subset \RR^n$, we denote by $\langle S\rangle$ the subspace generated by differences of elements of $S$.

In general, a description of the tropical tangents for vertices of $\trop(X)$ is straightforward. Fix a vertex $p\in\trop(X)$. Similarly to the case of curves, the tropical tangents to $p$ are shifted in precisely the standard directions that don't coincide with directions of edges emanating from $p$. More precisely,  let 
$\cJ(p)$ be the collection of maximal sets $J\subset \{0,\ldots,n\}$ such that 
 for every edge $E$ adjacent to $p$,
\[
\langle E \rangle \cap \langle J \rangle = 0.
\]

\begin{thm}[See \S \ref{sec:vert}]\label{thm:vert} Let $p\in \trop(X)$ be a vertex. Then the closure of the set of all tropical tangents to $p$ is the union over all $J\in\cJ(p)$ of 
	\[
		-p-\sum_{i\in J}\RR_{\geq 0} \cdot e_i.
	\]
\end{thm}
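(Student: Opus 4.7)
The plan is to compute a tropical tangent at $p$ via the valuations of the partial derivatives $\partial f/\partial x_i$ evaluated at a lift $x \in X$ of $p$, and then to characterize the achievable valuation patterns combinatorially.

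\textbf{Step 1 (Computing tangents).} Since $\trop(X)$ is smooth at $p$, the dual cell $\Delta_p$ is a unimodular $n$-simplex with vertices $\alpha^{(0)},\ldots,\alpha^{(n)}$, and the initial form $\operatorname{in}_p(f)$ is supported on these vertices. The tangent hyperplane at a smooth lift $x$ of $p$ has dual point $[\partial f/\partial x_0(x):\cdots:\partial f/\partial x_n(x)] \in (\PP^n)^*$, and a direct term-by-term valuation computation shows that its tropicalization takes the form
\[
q = -p - \sum_{i=0}^n \mu_i e_i,
\]
where $\mu_i \geq 0$ measures the excess valuation of $\partial f/\partial x_i(x)$ beyond its generic value. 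Moreover, $\mu_i > 0$ is possible only when the initial partial $\partial \operatorname{in}_p(f)/\partial x_i$ vanishes at the residue $\tilde x$ of $x$. Here we are using $e_0 = -e_1 - \cdots - e_n$.

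\textbf{Step 2 (Admissible index sets).} Set $J = \{i : \mu_i > 0\}$. Applying Euler's identity together with the change of variables $Y_k = \tilde u_k \tilde x^{\alpha^{(k)}}$ (where $\tilde u_k$ is the reduction of the leading coefficient of the monomial $x^{\alpha^{(k)}}$) converts the conditions $\operatorname{in}_p(f)(\tilde x) = 0$ and $\partial \operatorname{in}_p(f)/\partial x_i(\tilde x) = 0$ for $i \in J$ into a linear system in the $Y_k$'s, and realizability by a smooth lift amounts to finding a solution with every $Y_k \neq 0$. A direct linear-algebra check shows that this is possible exactly when no standard basis vector $e_k \in \RR^{n+1}$ lies in the span of $(1,\ldots,1)$ together with the rows $(\alpha^{(k')}_i)_{k' = 0,\ldots,n}$ for $i \in J$. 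Unpacking this via the duality between the facets of $\Delta_p$ and the edges of $\trop(X)$ adjacent to $p$, the condition translates exactly into $\langle E \rangle \cap \langle J \rangle = 0$ for every such edge $E$; hence the admissible sets are precisely the subsets of elements of $\cJ(p)$.

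\textbf{Step 3 (Realizing the full cone).} A single smooth lift $x$ whose residue satisfies the required partial-vanishing conditions furnishes just one point of $-p - \sum_{i \in J} \RR_{\geq 0} e_i$. To attain arbitrary $(\mu_i)_{i \in J} \in \RR^J_{\geq 0}$, we invoke the Osserman--Payne lifting results developed earlier in the paper, which allow us to match any tropically consistent valuation data by an actual point of $X$ provided the coefficients of $f$ beyond their leading parts are suitably generic. Taking the union over $J \in \cJ(p)$ and passing to closure yields the claimed description.

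The main obstacle is the combinatorial translation in Step 2: matching the linear-algebra solvability criterion to the edge-based definition of $\cJ(p)$ requires exploiting the unimodularity of $\Delta_p$ and carefully unpacking the duality between facets of $\Delta_p$ and edge directions of $\trop(X)$ at $p$. Step 3 is largely an application of machinery already developed in the paper, though uniform realizability across the entire cone requires some bookkeeping.
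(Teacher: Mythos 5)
Your proposal is correct in spirit and follows essentially the same path as the paper's proof: both reduce to (a) a linear-algebra/consistency criterion on the index set $J$ of partials with excess valuation, (b) a geometric translation of that criterion into the edge condition $\langle E\rangle\cap\langle J\rangle=0$, and (c) an application of Osserman--Payne lifting to realize the full cone. Your change of variables $Y_k=\tilde u_k\tilde x^{\alpha^{(k)}}$ and Euler identity are concrete ways of expressing what the paper encodes in the matrix $M^{(0)}$ (the all-ones row is the row $-1$, which is consistent because $z_{-1}=0$); your ``no standard basis vector $e_k$ in the row span'' criterion is exactly the paper's $M^{(0)}$-consistency of $J^+=J\cup\{-1\}$, and your duality unpacking is what Lemma \ref{lemma:geco} does.

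One correction is warranted in Step~3. You write that matching arbitrary valuation data requires ``the coefficients of $f$ beyond their leading parts [to be] suitably generic,'' but the theorem has no genericity hypothesis and needs none. The lifting criteria in Proposition \ref{prop:lifting} (with $m=0$, $L=\{0\}$, and $I_1=\emptyset$) are combinatorial conditions on the sets $\A_j$ and the matrices $M^{(j)}$, and at a vertex of a smooth tropical hypersurface they are automatically satisfied: $\A_0$ is the vertex set of a unimodular $n$-simplex whose affine span misses the origin (since $\deg f>1$), so $M^{(0)}$ has full rank $n+1$, a valid $K_0'$ exists, and the third criterion holds. The genericity you gesture at is relevant in the later, higher-dimensional-face analysis (where it appears as the ``generic valuations'' hypothesis), but not here. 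Attributing the realizability to coefficient genericity, rather than to the automatic fulfillment of the lifting criteria by unimodularity, slightly misrepresents why the argument works.
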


\begin{ex}[Quadric surface]\label{ex:surface}
We consider the smooth quadric surface 
\[X=V(tx_0^2+tx_0x_1+tx_0x_2+tx_0x_3+x_1x_2).\]
Its tropicalization has exactly two vertices: 
\[
	v_1=-e_1 \qquad\mathrm{and}\ v_2=-e_2.
\]
There are a total of seven edges (see Figure \ref{fig:tropicalSurface}):
\begin{align*}
&E_0=\overline{v_1v_2}\\
&E_{i1}=v_i-\RR_{\geq 0}\cdot e_3\qquad &i=1,2\\
&E_{i2}=v_i-\RR_{\geq 0}\cdot e_i\qquad &i=1,2\\
&E_{i3}=v_i-\RR_{\geq 0}\cdot (e_0+e_i)\qquad &i=1,2\\
\end{align*}
There are nine two-faces, which are formed by the following convex hulls of edges:
\begin{align*}
&F_j: E_0,E_{1j},E_{2j} \qquad & j=1,2,3\\
&F_{ijk}: E_{ij},E_{ik} \qquad & i=1,2,\ j,k=1,2,3,\ j\neq k. 
\end{align*}

\begin{figure}[htbp]
\begin{center}
\includegraphics[width=0.5\textwidth]{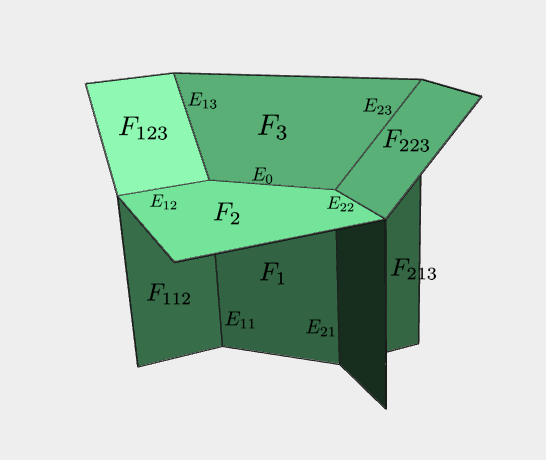}
\caption{The tropical surface $\trop(X)$.}
\label{fig:tropicalSurface}
\end{center}
\end{figure}

We use Theorem \ref{thm:vert} to determine the tropical tangents to the vertices. For $p=v_i,v_2$, $\cJ(p)$ consists of the  single set $\{0,2\}$ or $\{0,1\}$, respectively. By the theorem, we thus obtain the tropical tangents
\begin{align*}
	&-v_1-\RR_{\geq 0}\cdot e_0-\RR_{\geq 0}\cdot e_2\\
	&-v_2-\RR_{\geq 0}\cdot e_0-\RR_{\geq 0}\cdot e_1.
\end{align*}
\end{ex}

For tropical tangents of edges and higher dimensional cells in $\trop(X)$, we need a bit more notation.
For any rational polyhedron $P$ with facet $Q$, let $v_Q$ denote the primitive inward normal vector, and define the function:
\begin{align*}
\dd_Q:P&\to \RR\\
p&\mapsto \langle p-q,v_Q\rangle,
\end{align*}
where $q$ is any point of $Q$.
The quantity $\dd_Q(p)$ is sometimes called the \emph{lattice distance} from $p$ to the facet $Q$.
The function $\delta(p)=\min_Q \delta_Q(p)$ is a piecewise linear, concave function on $P$.
If $P$ has no facets (that is, $P$ equals its affine span), then $\dd(p)=\infty$.
If $P$ is any polyhedron contained in $\trop(X)$, we let $P^*$ be the Euclidean closure of all tropical tangents which are tangent at some point $p$ in the relative interior of $P$.

Returning to edges, 
fix an edge $E\subset \trop(X)$. The tropical tangents to $E$ will be shifted in standard directions that are distinct from the directions of the two-faces emanating from $E$. The shift will be unrestricted in directions that are not already contained in $E$. More precisely,
Let $\cJ(E)$ be the collection of  maximal sets $J\subset \{0,\ldots,n\}$ such that 
 for every two-face $F$ adjacent to $E$,
\[
\langle F \rangle \cap \langle J \rangle = \langle E \rangle \cap \langle J \rangle.
\]
For each $J\in \cJ(E)$, let $J'$ be the minimal subset of $J$ such that $\langle E \rangle \subset \langle J' \rangle$ if such a subset exists, and $\emptyset$ otherwise.

\begin{thm}[See \S \ref{sec:edges}]\label{thm:edge}
Let $p$ be a point in the interior of an edge $E\subset \trop(X)$. Then the tropical tangents of $p$ are the union
over all $J\in \cJ(E)$ of
\[
		-p-\sum_{j\in J}s_j e_j,
	\]
with $s_j\in \RR_{\geq 0}$, and the values $\{s_i\}_{i\in{J'}}$ satisfy either
\begin{enumerate}
\item the minimum is obtained at least twice and is at most $\delta(p)$; or
\item the  minimum equals $\delta(p)$; or
\item $p$ is the the midpoint of $E$, and the minimum is at least $\delta(p)$;
%
%\item $s_i=s_j\leq \delta(p)$ for some $i\neq j\in J'$ and $s_i\leq s_k$ for all $k\in J'$; or
%\item $s_i\geq \delta(p)$ for all $i \in J'$, and $s_i=\delta(p)$ for at least one $i\in J'$ unless $p$ is the midpoint of $E$.
\end{enumerate}
There is no condition on the $s_j$ with $j\in J\setminus J'$.
\end{thm}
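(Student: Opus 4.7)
The plan is to work with the explicit polynomial system from Section \ref{sec:tangent} whose solutions describe the conormal variety $W_X$, specialize it at a point $p$ in the interior of the edge $E$, and then combine the necessary-condition machinery of Section \ref{sec:consistency}, the non-minimal-valuation analysis of Section \ref{sec:NOT}, and the sufficiency criteria of Section \ref{sec:lifting}. Recall that the tangent hyperplane at a smooth $x \in X$ has homogeneous coordinates $[\partial_0 f(x):\cdots:\partial_n f(x)]$, so if $\trop(x)=p$ the candidate tropical tangent is determined by the valuation vector $(\nu(\partial_0 f(x)),\ldots,\nu(\partial_n f(x)))$ up to an overall shift; our job is to identify which vectors $q = -p - \sum_j s_j e_j$ with $s_j\ge 0$ can arise.

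First I would analyze the initial form $f_p$. Since $\trop(X)$ is smooth and $p$ lies in the relative interior of the edge $E$, the dual face $E^{\vee}$ in the Newton subdivision is a unimodular $(n-1)$-simplex, so $f_p$ is supported on exactly $n$ monomials. A direct inspection of the partial derivatives $\partial_i f_p$ identifies, for each $i\in\{0,\ldots,n\}$, whether the corresponding coordinate of the gradient is forced to have its naively predicted valuation $-\trop(\partial_i f_p(x))=p_i$ or is free to exceed it. The combinatorial condition $\langle F\rangle\cap\langle J\rangle=\langle E\rangle\cap\langle J\rangle$ over two-faces $F$ adjacent to $E$ translates, via the duality between a face of $\trop(X)$ and the corresponding face of the Newton subdivision, into the statement that the directions $e_i$ ($i\in J$) are those along which $f_p$ (and hence $\partial f$ in a suitable sense) is ``blind'' except for the $E$-direction itself, captured by $J'\subseteq J$. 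This gives the coarse shape of the answer: no constraint on $s_j$ for $j\in J\setminus J'$, and the interesting constraints concentrated on $\{s_j\}_{j\in J'}$.

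The heart of the proof is the analysis of $\{s_j\}_{j\in J'}$. Along the direction of $E$, the relevant components of the gradient are supported on the entire simplex $E^{\vee}$, so $\partial_i f(x)$ is a sum of several terms whose naive valuations coincide; a tropical tangent with $\min_{j\in J'} s_j$ strictly smaller than $\delta(p)$ can only occur if that minimum is attained twice, producing a genuine cancellation among minimal-valuation monomials of $\partial_i f(x)$. This yields case (1). Case (2), $\min = \delta(p)$, is the regime in which the cancellation is balanced by a monomial of $f$ whose $(0,-p)$-weight is one more than the minimum, i.e., the contribution supplied by the vertex $Q$ of $E$ achieving $\delta_Q(p)=\delta(p)$; here I invoke Section \ref{sec:NOT} to control the precise excess valuation. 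Case (3), the midpoint, is where both facets of $E$ tie for $\delta(p)$ so that the error terms from \emph{both} endpoints of $E$ are available simultaneously, giving enough freedom for $\min_{j\in J'}s_j$ to exceed $\delta(p)$ without being tied.

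Finally, sufficiency of these conditions follows from the lifting results of Section \ref{sec:lifting}: once the combinatorial/initial-form constraints above are satisfied, the specialized system on $W_X$ has a solution that lifts, producing an actual point $(x,H)\in W_X$ projecting to $(p,q)$. The main obstacle I anticipate is a clean, uniform treatment of case (3); the other cases follow from a straightforward initial-form computation on $f_p$, but the midpoint case requires genuinely balancing the contributions from two distinct vertices of $E$, and I expect the bookkeeping here to be the most delicate part of the argument.
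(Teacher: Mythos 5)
Your high-level plan coincides with the paper's: set up the system of equations from \S\ref{sec:tangent}, apply the consistency machinery of \S\ref{sec:consistency} for necessity and the lifting criteria of \S\ref{sec:lifting} for sufficiency, and identify $\nu_1-\nu_0=\delta(p)$ and the midpoint-case condition $\#\A_1>1$ via the \S\ref{sec:NOT} analysis of next-order terms. However, there is a genuine gap at the ``heart of the proof'' you describe, namely the derivation of the constraints on $\{s_j\}_{j\in J'}$.

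You attribute the ``minimum attained twice or equal to $\delta(p)$'' constraint to a cancellation \emph{within a single} $\partial_i f(x)$. That cancellation is merely the statement that $s_i>0$; it does not couple the $s_j$'s and cannot by itself produce the required constraint on $\min_{j\in J'}s_j$. The actual mechanism --- and the reason the consistent-sequence formalism is needed at all --- is the \emph{unique linear relation} $\sum_{i\in Z}\alpha_i M_i^{(0)}=0$ among the rows of $M_{J_0}^{(0)}$ indexed by $J_0=J\cup\{-1\}$. That relation yields, after one step of row reduction, the transformed equation $g_\ell^{(1)}=z_\ell^{(1)}$ with
\[
z_\ell^{(1)}=z_\ell-\sum_{i\in Z,\,i\neq\ell}\frac{\alpha_i}{\alpha_\ell}z_i,
\]
where $g_\ell^{(1)}$ has no $\A_0$-monomials and hence its minimal terms sit at valuation $\nu_1=\nu_0+\delta(p)$. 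It is the comparison of $\nu(z_\ell^{(1)})$ against $\nu_1$ (with strict inequality permitted exactly when $J_1\neq\emptyset$, i.e.\ when $\#\A_1>1$, i.e.\ at the midpoint) that forces the minimum of $\{\nu(z_i)\}_{i\in Z}$ to be attained twice or to equal $\nu_1$, and one checks $J'=Z\setminus\{-1\}$. Without this linear combination your case analysis has no source, and the same relation is also what you need for sufficiency, since it determines which $z_i$ must be chosen jointly in Proposition~\ref{prop:lifting}. You should also record the structural fact that $E$ being an edge forces $\#(J_0\setminus K_0)\le 1$ and hence $I_2=\emptyset$, so the consistent sequence terminates after $(J_0,J_1)$; this is what keeps the bookkeeping finite and is assumed silently in your sketch.
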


\begin{ex}[Quadric surface continued]
	We continue Example \ref{ex:surface}. First we consider the edge $E_0$, which is parallel to $e_1-e_2$. Here, $\cJ(E_0)$ consists only of $\{0\}$, with corresponding $J'=\emptyset$. This contributes the tropical tangents $-E_0-\RR_{\geq 0}\cdot e_0$.

	Next we consider an edge $E_{i1}$, which is parallel to $e_3$. Here we obtain that $\cJ(E_{i1})$ consists only of $\{0,3\}$, with $J'=\{3\}$. For a point $p=v_i-\lambda\cdot e_3$, its lattice distance is $\dd(p)=\lambda$, and we obtain the tropical tangents
	\[
		\{-v_i+\lambda e_3-\lambda e_3\}-\RR_{\geq 0}\cdot e_0=-v_i-\RR_{\geq 0}\cdot e_0.
	\]
In other words, the tangents coming from the edge $E_{i1}$ form a two dimensional cone in the dual space spanned by $-e_0$ and $-e_3$.

	Consider instead an edge $E_{i2}$, which is parallel to $e_i$. Here, $\cJ(E_{i2})$ consists only of $\{i\}$, with $J'=\{i\}$. For $p=v_i-\lambda\cdot e_i$ we have $\dd(p)=\lambda$, and we only obtain the tropical tangent $-v_i$. 

	Finally, consider an edge $E_{i3}$, which is parallel to $-e_0-e_i$. Here, the only element of $\cJ(E_{i2})$ is $\emptyset$, and we obtain $-E_{i3}$ in $\trop(X^*)$. 
\end{ex}

As we increase the dimension of the strata of $\trop(X)$ where we look for tropical tangents, the complexity increases. We will now assume that we are in the situation $n=3$,  $\trop(X)$ is a tropical surface, and we have fixed a two-face $F\subset \trop(X)$. 
We will also often be assuming that $X$ has \emph{generic valuations}, see Definition \ref{defn:gen}.
Roughly speaking, this requires the coefficients of $f$ to be chosen such that their valuations are sufficiently generic.
Many situations, including those of bounded faces, are then particularly straightforward:

\begin{thm}[See \S \ref{face:4}]\label{thm:face}
Let $X$ be a smooth tropical surface with generic valuations.
Let $J\subset \{0,1,2,3\}$ consist of those $i$ such that $e_i\in\langle F \rangle$. Assume that the recession cone of $F$ does not intersect $\sum_{i\in J} \RR_{\geq 0} \cdot (-e_i)$ non-trivially. Then 
\[
F^*=-F-\sum_{i\in J} \RR_{\geq 0} \cdot e_i.
\]
\end{thm}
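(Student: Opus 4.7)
The plan is to combine the consistency analysis of \S\ref{sec:consistency} with the lifting criterion from \S\ref{sec:lifting}, exploiting two special features of a 2-face $F\subset\trop(X)\subset\RR^3$: its dual cell in the Newton subdivision is an edge, so $\mathrm{in}_p(f)$ is a binomial for every $p$ in the relative interior $F^\circ$; and $F$ is top-dimensional in $\trop(X)$, so no higher-dimensional adjacent strata impose extra constraints. Fix $p\in F^\circ$ and write $\mathrm{in}_p(f) = c_\alpha x^\alpha + c_\beta x^\beta$; smoothness makes $\beta-\alpha$ primitive, and the homogeneity of $f$ shows that $e_i\in\langle F\rangle$ iff $\alpha_i=\beta_i$, so $J=\{i : \alpha_i=\beta_i\}$. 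For a smooth lift $x$ of $p$, the relation $c_\alpha x^\alpha = -c_\beta x^\beta$ makes the binomial contribution to $\partial f/\partial x_i(x)$ equal to $(\alpha_i-\beta_i)\, c_\alpha x^{\alpha-e_i}$, which vanishes precisely when $i\in J$.

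For the necessary direction, I would use the tangent-hyperplane setup of \S\ref{sec:tangent}. For $i\notin J$ the valuation $\nu(\partial f/\partial x_i(x))$ is rigidly determined by the initial binomial and contributes a forced coordinate to the tropical tangent; for $i\in J$ the initial cancellation forces $\nu(\partial f/\partial x_i(x))$ strictly upward, so the corresponding coordinate can only shift in the $-e_i$ direction. The genericity hypothesis of Definition \ref{defn:gen} prevents further accidental cancellations among non-initial monomials. Letting $p$ vary over $F^\circ$ yields the containment
\[
F^*\ \subset\ -F - \sum_{i\in J}\RR_{\geq 0}\cdot e_i.
\]

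For the sufficient direction, given a target $-p-\sum_{i\in J} s_i e_i$ with $s_i\geq 0$, I would use the non-initial monomial analysis of \S\ref{sec:NOT} to pick, for each $i\in J$, a monomial of $f$ whose contribution to $\partial f/\partial x_i$ attains the prescribed valuation after a suitable adjustment of $x$ within the fiber $\trop^{-1}(p)$. The Osserman--Payne lifting theorem \cite{lifting} as applied in \S\ref{sec:lifting} then promotes this tropical data to an actual smooth point of $X$ whose tangent hyperplane tropicalizes as required.

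The main obstacle is uniformity in this sufficiency step: one must check that the freedom in the $|J|$ flexible directions never disturbs the rigid coordinates $i\notin J$, and that the parametrization $(p,(s_i))\mapsto -p-\sum s_i e_i$ covers $F^*$ faithfully. This is exactly the content of the recession-cone hypothesis: if $\mathrm{rec}(F)$ intersected $\sum_{i\in J}\RR_{\geq 0}\cdot(-e_i)$ non-trivially, then $-F$ would already have $+e_i$ as a recession direction for some $i\in J$, forcing the Minkowski sum $-F-\sum_{i\in J}\RR_{\geq 0}\cdot e_i$ to degenerate in extra directions and the lifting argument to break at the boundary of $F$. Excluding this ensures both that the target polyhedron is the right object and that the lifting works uniformly on $F^\circ$.
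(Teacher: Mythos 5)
Your proposal captures the correct general framework (consistency plus Osserman--Payne lifting, and the correct identification of $J=\{i:\alpha_i=\beta_i\}$), and the necessary direction is essentially right in outline, but there is a genuine gap in the sufficiency step. You treat the parameters $s_i$, $i\in J$, as if they were freely choosable at a fixed $p\in F^\circ$: ``given a target $-p-\sum s_i e_i$ ... pick ... a monomial whose contribution attains the prescribed valuation.'' That is not how it works. As the paper's Propositions~\ref{prop:face2}, \ref{prop:face3}, \ref{prop:face5}, and \ref{prop:face4} show, the closure of the set of tropical tangents at a single fixed $p$ is a genuinely constrained set: a typical case for $\#J=2$ is $-p-\dd(p)(e_i+e_j)-\lambda_i e_i-\lambda_j e_j$ subject to $\lambda_i\lambda_j=0$ and/or $\lambda_\ell=0$, depending on the cell of $\cS_F$ containing $p$. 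The forced shift by $\dd(p)(e_i+e_j)$ and the condition $\lambda_i\lambda_j=0$ mean you cannot realize an arbitrary $s_i,s_j\geq 0$ from a single $p$, and the Osserman--Payne theorem will not rescue this: the tropical prevariety obtained from the transformed system $G^{(m)}$ already forbids those valuations.

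The correct sufficiency argument, and the one the paper uses, requires letting $p$ \emph{vary} across $F^\circ$ and matching each target point $-q-\lambda_ie_i-\lambda_je_j$ with some $(p,(s_i))$ lying in the constrained tangent set at that $p$. Concretely, the paper connects endpoints of the segment $(-q+\langle e_i\rangle)\cap(-F)$ by a piecewise linear path $\gamma$ in $F$ along which $-\gamma(t)-\dd(\gamma(t))(e_i+e_j)$ stays in $-q+\langle e_i\rangle$, then locates a cell $S\in\cS_F$ that $\gamma$ crosses whose tangents supply the $\RR_{\geq 0}\cdot e_i$ factor, and similarly for $e_j$; boundedness of $F$ plus the recession-cone hypothesis then supply a zero-dimensional $S\in\cS_F$ giving the full quadrant. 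This path/convexity argument is the missing idea in your sketch. Your interpretation of the recession-cone hypothesis is also slightly off: the issue is not that the Minkowski sum $-F-\sum\RR_{\geq 0}\cdot e_i$ ``degenerates,'' but that when $-e_i\in\mathrm{rec}(F)$ the distance function $\dd$ is unbounded along $(p+\langle e_i\rangle)\cap F$ and the cell $S$ has $-e_i$ in its recession cone; the structure of $\A_1$ and $\A_2$ changes, the tangent set collapses to a union of lower-dimensional strata (Proposition~\ref{prop:face2}(3), Proposition~\ref{prop:face5}(2)), and the clean Minkowski-sum formula simply fails. Finally, note that the paper proves Theorem~\ref{thm:face} by a clean case split on $\#J\in\{0,1,2\}$, deducing it entirely from the already-established Propositions~\ref{prop:face}--\ref{prop:face4} rather than redoing the consistency analysis from scratch; your proposal would need to either reproduce those propositions or invoke them.
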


To get more precise information about tropical tangents, and to deal with those situations not covered by the above theorem, we will split things up into a number of cases.

\begin{prop}[See \S \ref{face:1}]\label{prop:face}
Let $F$ be a two-face of the smooth tropical surface $\trop(X)$, such that $e_i\notin \langle F\rangle $ for all $i$. Then for any $p$ in the relative interior of $F$, the only tropical tangent to $p$ is $-p$. 
\end{prop}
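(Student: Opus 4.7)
My plan is to compute the tropical tangent at $p$ directly from the tangent hyperplane at any smooth preimage $x \in X \cap (\KK^*)^n$ with $\trop(x)=p$, and show that the computation collapses to $-p$ regardless of the choice of $x$. The tropical tangent has coordinates $q_i = \nu(\partial_0 f(x)) - \nu(\partial_i f(x))$, so it suffices to prove that $\nu(\partial_i f(x)) = \nu(c_\alpha x^\alpha) - \nu(x_i)$ for every $i = 0,\ldots,3$, where $c_\alpha x^\alpha$ and $c_\beta x^\beta$ are the two monomials of $f$ of minimal valuation at $x$. These two monomials are uniquely determined and uniquely attain the minimum precisely because $p$ is interior to the two-face $F$, which is dual to the edge $\alpha\beta$ of the Newton subdivision.

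The first step is to translate the geometric hypothesis $e_i \notin \langle F\rangle$ into the algebraic condition $\alpha_i \neq \beta_i$ for every $i \in \{0,1,2,3\}$. Under the convention $e_0 = -e_1-e_2-e_3$, the direction space $\langle F\rangle$ is the annihilator of $\alpha-\beta$ in $\RR^3$, and $e_0 \in \langle F\rangle$ corresponds via the Euler relation to $\alpha_0 = \beta_0$. Next, I would inspect $\mathrm{in}_p(\partial_i f) = \sum_\gamma \gamma_i c_\gamma x^{\gamma - e_i}$, keeping only monomials of minimal valuation: only $\gamma = \alpha$ (when $\alpha_i \neq 0$) and $\gamma = \beta$ (when $\beta_i \neq 0$) can contribute. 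Since $\alpha_i \neq \beta_i$, at least one of them is nonzero. If exactly one is, the initial form is a single nonvanishing monomial, so $\nu(\partial_i f(x))$ attains the desired value. If both are nonzero, the initial form is $\alpha_i c_\alpha x^{\alpha-e_i} + \beta_i c_\beta x^{\beta-e_i}$, and the relation $\bar c_\alpha \bar x^\alpha + \bar c_\beta \bar x^\beta = 0$ in the residue field --- forced by $f(x) = 0$ --- rewrites its leading coefficient as $(\alpha_i - \beta_i)\bar c_\alpha \bar x^{\alpha - e_i}$, nonvanishing in characteristic zero precisely because $\alpha_i \neq \beta_i$. Either way $\nu(\partial_i f(x)) = \nu(c_\alpha x^\alpha) - \nu(x_i)$, and substituting back yields $q_i = -p_i$.

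The main obstacle is the careful bookkeeping of the identifications between the homogeneous $(n+1)$-variable picture on $\PP^n$ and $(\PP^n)^*$ and the affine tropical chart in $\RR^n$, ensuring that the argument is uniform over $i=0$ as well as $i=1,2,3$. Once the equivalence of $e_i \in \langle F\rangle$ with $\alpha_i = \beta_i$ is pinned down across all four indices, the entire proof reduces to the single observation that distinct multiplicities $\alpha_i \neq \beta_i$ obstruct cancellation in the initial form of each $\partial_i f$, which forces every $\nu(\partial_i f(x))$ to its ``generic'' value and hence $q=-p$.
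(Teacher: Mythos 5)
Your proof is correct, and it takes a genuinely different route from the paper. The paper deduces this proposition from its general machinery: Lemma \ref{lemma:geco} shows that the hypothesis $e_i\notin\langle F\rangle$ for all $i$ is equivalent to $J_0=\{-1\}$ being the unique maximal consistent set, whence Proposition \ref{prop:consistency} gives necessity (any tropical tangent must be $-p$) and Proposition \ref{prop:lifting} gives sufficiency ($-p$ is actually realized). You instead do a direct valuation computation: translating $e_i\notin\langle F\rangle$ into $\alpha_i\neq\beta_i$ for the two vertices $\alpha,\beta$ of the dual edge, and then checking that the initial terms of $x_i\partial_i f$ cannot cancel, either because exactly one of $\alpha_i,\beta_i$ is nonzero, or because when both are nonzero the relation $f(x)=0$ forces the leading coefficient to $(\alpha_i-\beta_i)\,\bar c_\alpha\bar x^\alpha\neq 0$ in the residue field (here the hypothesis that $\nu$ is trivial on the integers is used to ensure characteristic zero of the residue field). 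This is more elementary and self-contained than the paper's argument, and it buys you a stronger conclusion: the valuation $\nu(x_i f_i(x))=\nu_0(p)$ holds for \emph{every} $x\in V(f)\cap(\KK^*)^3$ with $\trop(x)=p$, not just some.

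One small thing you leave implicit is the existence direction: the proposition asserts $-p$ \emph{is} a tropical tangent, which requires some smooth $x$ over $p$. But your computation actually delivers this for free --- since $\nu(x_if_i(x))=\nu_0(p)<\infty$, all $f_i(x)$ are nonzero, so any $x\in V(f)\cap(\KK^*)^3$ tropicalizing to $p$ is automatically a smooth point of $X$; existence of such an $x$ then follows from Kapranov's theorem. You should state this explicitly rather than assuming a smooth preimage at the outset. With that addendum, your approach completely replaces the appeal to the lifting machinery for this particular proposition (though of course the lifting machinery is needed for the harder cases in the rest of \S\ref{sec:surfaces}).
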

\begin{ex}[Quadric surface continued]
	We continue Example \ref{ex:surface}. 
	The only face of $\trop(X)$ to which Proposition \ref{prop:face} applies is $F_3$. We thus obtain $-F_3$ as tropical tangents.
\end{ex}

We next deal with faces that contain a single standard direction. 
Consider the piecewise linear concave function $\dd:p\mapsto \min_{E} \dd_E(p)$, where $E$ are the edges of $F$.
This induces a polyhedral subdivision $\cS_F$ of $F$. 
For any $S\in \cS_F$, let $E_S$ denote the  edges $E$ of $F$ for which $\dd_E$ is minimal on $S$. 
In the following propositions, we will always assume that $\trop(X)$ has generic valuations.
Note that in this case, $E_S$ consists of $3-\dim(S)$ edges.

\begin{prop}[See \S \ref{face:2}]\label{prop:face2}
	Let $F$ be a face of the smooth tropical surface $\trop(X)$ such that $e_i\in \langle F \rangle$, but $e_j\notin \langle F \rangle$ for $j\neq i$. Fix $S\in \cS_F$ intersecting the relative interior of $F$. Assume that $\trop(X)$ has generic valuations. We distinguish several (not mutually exclusive) cases.

\begin{enumerate}
	\item Not every $E\in E_S$ is parallel to $e_i$. For $p$ in the relative interior of $S$, the tropical tangents  are
\[-p-s_i e_i,\]
where $s_i=\dd(p)$ if exactly one element of $E_S$ is not parallel to $e_i$, and $s_i\geq\dd(p)$ otherwise. 

\item Every $E\in E_S$ is parallel to $e_i$. If  $-e_i$ is not in the recession cone of $S$, then \[
		S^*=\{-p-\dd(p)e_i\ |\ p\in S\}-\RR_{\geq 0}\cdot e_i.\]
	\item If $-e_i$ is in the recession cone of $F$, then 
	\[
	F^*=\bigcup_{E} -E \cup \bigcup_{v} \left(-v-\RR_{\geq 0}e_i\right),
	\]
	where  $E$ varies over edges of $F$ not parallel to $e_i$, and  $v$ varies over vertices of $F$ not adjacent to edges parallel to $e_i$.
\end{enumerate}	
\end{prop}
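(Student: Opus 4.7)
The plan is to stratify the analysis across the cells $S\in\cS_F$ and apply the tropical tangent machinery of \S\ref{sec:tangent}--\S\ref{sec:lifting} cell by cell. Fix $p$ in the relative interior of $S$. By the hypothesis that $\langle F\rangle$ meets the coordinate axes only in $e_i$, together with the setup of \S\ref{sec:tangent}, a tropical tangent at $p$ has the form $-p-s_ie_i$ for some $s_i\geq 0$, and we must determine which values of $s_i$ are realized. The initial degeneration $\mathrm{in}_p f$ is the binomial dual to $F$, and the additional monomials that come into play as we perturb $p$ toward the boundary of $F$ are exactly those indexed by the edges $E\in E_S$, each contributing at excess valuation $\dd_E(p)=\dd(p)$.

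In Case (1), at least one $E\in E_S$ is not parallel to $e_i$, so the associated monomial has an $e_i$-exponent different from those of the two monomials of $\mathrm{in}_p f$. Applying the consistency analysis of \S\ref{sec:consistency} to the partial derivative $\partial f/\partial x_i$, the valuation equation takes the form of a tropical sum in which this monomial enters at the value $\dd(p)$. When exactly one non-parallel edge is present, the tropical balancing forces $s_i=\dd(p)$; when two or more non-parallel edges are present, cancellation among their contributions yields only the weaker condition $s_i\geq\dd(p)$. In Case (2), every $E\in E_S$ contributes a monomial whose $e_i$-exponent matches that of $\mathrm{in}_p f$, so the equation coming from $\partial f/\partial x_i$ becomes vacuous and only $s_i\geq\dd(p)$ remains, forced by the other partial derivatives. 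Sufficiency in both cases is established by invoking the lifting result of \S\ref{sec:lifting}, with the generic-valuations hypothesis ruling out spurious cancellations so that tropical solutions genuinely lift to $\KK$. The recession-cone hypothesis in Case (2) permits $p$ to vary over all of $S$ while the candidate tangent $-p-s_ie_i$ remains inside a single cell of $\cS_F$, producing the stated formula for $S^*$.

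Case (3) then follows by taking closures and assembling the pieces. When $-e_i$ is in the recession cone of $F$, the union $\bigcup_{S\in\cS_F} S^*$ collapses: along the recession direction $\dd(p)\to\infty$, the rays $-p-\RR_{\geq 0}e_i$ sweep out full strips, which combine with the edge descriptions from Theorem \ref{thm:edge} and the vertex descriptions from Theorem \ref{thm:vert} to give exactly $\bigcup_E -E\cup\bigcup_v(-v-\RR_{\geq 0}e_i)$, the interior contributions from edges parallel to $e_i$ and from vertices adjacent to such edges being absorbed into the neighboring strips. The main obstacle is Case (1) with a single non-parallel edge in $E_S$: to conclude the equality $s_i=\dd(p)$ rather than merely the inequality requires showing that the relevant tropical sum attains its minimum uniquely, which must be argued by combining the non-cancellation afforded by generic valuations with a careful application of the Osserman--Payne lifting to realize the candidate tangent over $\KK$.
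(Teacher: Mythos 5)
Your proposal captures the general flavor of the paper's approach (consistent sequences plus Osserman--Payne lifting, monomials indexed by edges of $F$ entering at lattice distance $\dd(p)$), but it misses the single observation the entire proof turns on, and this causes Cases (2) and (3) to go wrong.

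The crux is that not every $u\in\A$ corresponding to an edge $E\in E_S$ enters the ``next-order'' set $\A_1$: precisely those $u$ whose edge $E$ is parallel to $e_i$ have $u_i^{(1)}=0$ and are therefore excluded from $\A_1$, even though they realize the minimal lattice distance $\dd(p)$. Consequently, the quantity the lifting argument actually controls is $\phi(p)=\nu_1(p)-\nu_0(p)$, which is the distance to the nearest facet \emph{not} parallel to $e_i$, and this satisfies $\phi(p)\ge\dd(p)$ with strict inequality in the interior of any $S$ whose $E_S$ consists only of parallel edges. The paper isolates this cleanly in a preparatory lemma (its Lemma~\ref{lemma:prep}), and then the heart of Cases (2) and (3) is the translation from $\phi$ to $\dd$. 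Your proposal does not make this distinction, so the assertion in Case (2) that ``the equation from $\partial f/\partial x_i$ becomes vacuous'' and that $s_i\ge\dd(p)$ pointwise is incorrect: at a point $p$ in the interior of $S$ with $\#\A_1(p)=1$ the tangent value $s_i$ is \emph{pinned} to $\phi(p)$, which is strictly larger than $\dd(p)$. The stated formula for $S^*$ only emerges after one slices $S$ into segments $(q+\langle e_i\rangle)\cap S$, observes that $\phi$ is concave, agrees with $\dd$ at the endpoints, and picks up a free ray $-\RR_{\ge 0}e_i$ at the interior point of the segment where $\A_1$ changes (and hence $\#\A_1>1$). Your argument never produces that free ray.

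Case (3) is also not correctly argued. Your claim that ``along the recession direction $\dd(p)\to\infty$'' is false: when $-e_i$ lies in the recession cone of $F$, the lattice distance $\dd$ to the facets parallel to $e_i$ is bounded along a ray in direction $-e_i$, so $\dd$ does not diverge. What actually happens is much more delicate: $\A_0$ has $i$-th coordinate $0$, every $u$ indexing an edge of $F$ not parallel to $e_i$ has $u_i=1$, and every $u\in\A_1$ has $u_i\ge 1$; this implies $\A_1(q-\lambda e_i)$ is \emph{constant} for $\lambda>0$ and that $-p-\phi(p)e_i=-q$ is independent of $\lambda$, so the tangent collapses to the boundary point $q$. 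The free ray appears only when $q$ is a vertex not adjacent to any $e_i$-parallel edge of $F$ (the point where $\#\A_1>1$). Invoking Theorems~\ref{thm:vert} and~\ref{thm:edge} here is a category error: those give tangents from points \emph{on} edges or vertices of $\trop(X)$, while $F^*$ consists of tangents from the relative interior of $F$.

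Finally, your identified ``main obstacle'' is misplaced: Case (1) with a single non-parallel edge is the easiest case (there $\#\A_1=1$ so equality $s_i=\phi(p)=\dd(p)$ is automatic from the lemma). The difficult part of the proof is the concavity argument for Case (2) and the analysis of $\A_1$ along rays for Case (3).
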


\begin{ex}[Quadric surface continued]
	We continue Example \ref{ex:surface}. 
	Face $F_1$ falls under the third case of Proposition  \ref{prop:face2}, and we obtain $-E_0$ as its tropical tangents. 
%	Alternatively, we may conclude the same by considering the first case of the proposition.  Indeed, for every point in the interior of $F_3$ that is close enough to $E_0$, the set $E_S$ consists only of $E_0$, and in particular has a single element that is not parallel to $e_0$. 
\end{ex}

The most tedious case is when $\langle F \rangle=\langle e_i,e_j\rangle$. We split this up into several propositions:

\begin{prop}[See \S \ref{face:3}]\label{prop:face3}
Let $F$ be a face of the smooth tropical surface $\trop(X)$ such that $\langle F \rangle=\langle e_i,e_j\rangle$,  $i\neq j$. Fix $S\in \cS_F$ intersecting the relative interior of $F$. 
	Assume that $\trop(X)$ has generic valuations.
 
Assume that either $\dim S=0$, $\dim S=1$ and the edges in $E_S$ are not parallel, or $\dim S=2$ and the single edge in $E_S$ is not parallel to $e_i$ or $e_j$. Then the closure of the set of tropical tangents for $p$ in the interior of $S$ is 
		\begin{align*}
			-p-\dd(p)(e_i+e_j)-\lambda_ie_i-\lambda_je_j,
		\end{align*}
		subject to the additional conditions on $\lambda_i,\lambda_j\in\RR_{\geq 0}$:
%		\begin{center}
%		\begin{tabular}{|l|l|l|}
%			\hline
%			$\dim $S & edges of $E_S$ & conditions\\
%			\hline
%2 & (always) & $\lambda_i=\lambda_j=0$\\ 
%\hline
%\multirow{3}{*}{1} & (always) & $\lambda_i\lambda_j=0$\\ 
% & one edge parallel to $e_i$ & $\lambda_i=0$\\ 
% & one edge parallel to $e_j$ & $\lambda_j=0$\\ 
%\hline
% \multirow{3}{*}{0} & two edges parallel  & $\lambda_i\lambda_j=0$\\ 
% & two edges parallel to $e_i$ & $\lambda_i=0$\\ 
% & two edges parallel to $e_j$ & $\lambda_j=0$\\ 
%\hline
%\end{tabular}
%\end{center}
		\begin{align*}
			\lambda_i\lambda_j=0&\qquad&\textrm{if edges in $E_S$ are  in exactly two directions}\\
			\lambda_i=0&\qquad&\textrm{if all but exactly one edge in $E_S$ are parallel to $e_i$}\\
			\lambda_j=0&\qquad&\textrm{if all but exactly one edge in $E_S$ are parallel to $e_j$}.
		\end{align*}

\noindent In particular, $\lambda_i=\lambda_j=0$ whenever $\dim(S)=2$, and we always have $\lambda_i\lambda_j =0$ when  $\dim(S)=1$ (where, in addition, $\lambda_i$ or $\lambda_j$ is $0$ if  an edge in $E_S$ is parallel to $e_i$ or $e_j$ respectively).
\end{prop}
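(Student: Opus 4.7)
The plan is to follow the strategy used throughout \S\ref{sec:tangent}--\ref{sec:edges} and in Propositions \ref{prop:face}--\ref{prop:face2}: a tropical tangent $q$ at a lift $x\in X$ of $p$ is determined by the recipe $q_k = \val(\partial_0 f(x)) - \val(\partial_k f(x))$, so everything reduces to computing the valuations of partial derivatives at $x$ with $\trop(x)=p$. The genericity assumption of Definition \ref{defn:gen} lets us evaluate these valuations from the initial form $\mathrm{in}_p f$ plus the next-order monomials controlled by the lattice distances $\dd_E(p)$ to the edges $E$ of $F$.

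First I would analyze the combinatorics of $\mathrm{in}_p f$. Since $F$ is a two-face of a smooth tropical surface with $\langle F \rangle = \langle e_i,e_j\rangle$, the dual cell of the Newton subdivision is a lattice edge $[u,u']$ with $u'-u$ orthogonal to $\langle F \rangle$; thus $\mathrm{in}_p f$ is the binomial $c_u x^u + c_{u'} x^{u'}$ for every $p$ in the relative interior of $F$. The edges $E$ of $F$ correspond to triangles of the Newton subdivision containing $[u,u']$, i.e.\ to additional lattice points $u_E$, and the direction of $E$ records whether $u_E-u$ (or $u_E-u'$) has a nonzero $e_i$ or $e_j$ component. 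For $p$ in the relative interior of $S\in\cS_F$, the edges $E\in E_S$ are precisely those whose associated $u_E$ realizes the minimum lattice distance $\dd(p)$, and generic valuations guarantee no accidental coincidences beyond those.

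Next I would compute $\val(\partial_k f(x))$ in each direction. For $k\notin\{i,j\}$, differentiation by $x_k$ annihilates the binomial $\mathrm{in}_p f$ (since $u,u'$ agree in coordinate $k$), but the next-order monomials from the $u_E$ supply a single non-cancelling leading term of valuation $\val(c_{u_E}) + \langle u_E,-p\rangle + p_k$, giving $q_k=-p_k$ and hence the $-p$ contribution. For $k\in\{i,j\}$, the binomial $\mathrm{in}_p f$ itself differentiates to a nontrivial expression, but the contribution of the $u_E$ at distance exactly $\dd(p)$ can raise the valuation by $\dd(p)$, producing the mandatory $-\dd(p)(e_i+e_j)$ shift. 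The extra freedom $\lambda_i e_i$ and $\lambda_j e_j$ then arises exactly when one can choose the lift $x$ to force additional cancellation in $\partial_i f(x)$ or $\partial_j f(x)$ among the leading terms produced by the $u_E$ for $E\in E_S$. In the hypotheses of the proposition (one edge in $E_S$ of dimension 2, two non-parallel edges in dimension 1, or three edges in dimension 0) there is always at least one $u_E$ whose exponents in both $x_i$ and $x_j$ are nonzero, which is what allows the $\dd(p)(e_i+e_j)$ shift to be realized simultaneously in both coordinates. The conditions on $\lambda_i,\lambda_j$ then fall out of a direct case check: if the edges of $E_S$ point in two distinct non-parallel directions, cancelling the leading terms in both $\partial_i f$ and $\partial_j f$ simultaneously would force a rank drop on a system we can solve only by fixing at least one of $x_i,x_j$, giving $\lambda_i\lambda_j=0$; if all but one edge is parallel to $e_i$ (resp.\ $e_j$), the unique non-parallel edge pins down the leading term of $\partial_j f$ (resp.\ $\partial_i f$) and forces $\lambda_j=0$ (resp.\ $\lambda_i=0$). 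Realizability of the candidate tangents as actual images of lifts is then obtained by appealing to the Osserman--Payne lifting result applied to the tropical basis for $W_X$ built in \S\ref{sec:lifting}.

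The main obstacle is the bookkeeping: one must carefully track for each stratum $S$ and each $k\in\{i,j\}$ which monomials $u_E$ contribute to the leading term of $\partial_k f(x)$, and verify that generic valuations exclude precisely those cancellations not forced by the directions of the edges in $E_S$. Once this combinatorial analysis is done, the necessary conditions on tropical tangents read off directly from the valuation formulas, and sufficiency is a routine invocation of the lifting machinery already established earlier in the paper.
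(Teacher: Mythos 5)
Your overall strategy matches the paper's: fix the binomial initial form $c_u x^u + c_{u'}x^{u'}$ dual to the two-face $F$, track which next-order monomials $u_E$ (indexed by the edges $E$ of $F$) control the leading terms of the partial derivatives, and invoke Osserman--Payne lifting via the machinery of \S\ref{sec:lifting}. However, the argument contains an internal contradiction that undermines the valuation analysis. You correctly observe that the Newton cell dual to $F$ is an edge $[u,u']$ with $u'-u$ orthogonal to $\langle F\rangle=\langle e_i,e_j\rangle$; this says precisely that $u_i=u'_i$ and $u_j=u'_j$, i.e.\ $u$ and $u'$ agree in coordinates $i$ and $j$. But two sentences later you assert that ``$u,u'$ agree in coordinate $k$'' for $k\notin\{i,j\}$, which is the opposite claim (and together with the first statement would force $u=u'$). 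The correct picture, which is the content of Lemma~\ref{lemma:geco}, is the reverse: for $k\in\{i,j\}$ the row $M^{(0)}_k=(u_k,u'_k)$ is proportional to $(1,1)$, so the lowest-order binomial part of $x_kf_k$ is killed by $f=0$ and $\nu(z_k)$ jumps to at least $\nu_1$, whereas for $k\notin\{i,j\}$ the difference $u_k-u'_k$ is nonzero, the leading term $(u_k-u'_k)c_ux^u$ survives, and $\nu(z_k)=\nu_0$.

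Because the mechanism is reversed, the subsequent accounting of which $u_E$ contribute and how the constraints on $\lambda_i,\lambda_j$ arise is not reliable. The relevant data is not whether the exponents of $u_E$ in $x_i$ and $x_j$ are nonzero, but whether $(u_E-u)_i$ or $(u_E-u)_j$ vanish — equivalently, whether the corresponding edge of $F$ is parallel to $e_i$ or $e_j$ — and this is exactly what the columns of $M^{(1)}$ record. The case analysis in the proposition ($\dim S=0,1,2$; which columns are zero or proportional) is precisely the $M^{(1)}$-consistency computation carried out in \S\ref{face:3}. Your sketch would need the combinatorial direction flipped throughout before that case check can be made rigorous; as written, the intermediate steps contradict each other, and the fact that the final formula $q_k=-p_k$ for $k\notin\{i,j\}$ still appears is an artifact of the valuation shift hitting all coordinates symmetrically rather than of a sound derivation.
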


\begin{prop}[See \S \ref{face:3}]\label{prop:face5}
Let $F$ be a face of the smooth tropical surface $\trop(X)$ such that $\langle F \rangle=\langle e_i,e_j\rangle$,  $i\neq j$. Fix $S\in \cS_F$ intersecting the relative interior of $F$. 
	Assume that $\trop(X)$ has generic valuations.
 
Assume that $\dim S=2$ and the edge in $E_S$ is parallel to $e_j$.

\begin{enumerate}
\item If $-e_j$ is not in the recession cone of $F$, then 

\[S^*=\{-p-\dd(p)(e_i+e_j)\ |\ p\in S\}-\RR_{\geq 0}\cdot e_j.
		\]

	\item  If $-e_j$ is in the recession cone of $F$,
then 
\[
	S^*=\{-p-\dd(p)(e_i+e_j)-\lambda_je_j\ |\ p\in S'\}
\]
where $S'$ consists of the edges of $S$ not parallel to $e_j$, and $\lambda_j\geq 0$ is equal to zero except at the interior vertices of $S'$.
\end{enumerate}
\end{prop}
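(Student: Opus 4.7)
The plan is to adapt the approach used for Proposition \ref{prop:face3}, which treats the case where the minimizing edge in $E_S$ is not parallel to $e_i$ or $e_j$. The new feature here is that the unique edge of $E_S$ is parallel to $e_j$, and that the recession behavior of $F$ in the $-e_j$ direction may be non-trivial, necessitating the split into two sub-cases.

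First, I would set up the polynomial system characterizing tropical tangents via \S\ref{sec:tangent}, and extract the necessary conditions on the shift vector $s=(s_0,\ldots,s_n)$ using \S\ref{sec:consistency}. Because $\langle F \rangle = \langle e_i, e_j \rangle$, only $s_i$ and $s_j$ are genuinely free; the other coordinates are pinned down and contribute the $-p$ translation. The minimizing edge $E \in E_S$ has primitive inward normal $v_E$ in $\langle F \rangle$ proportional to $e_i$, since $E$ is parallel to $e_j$. The corresponding tropical equation thus forces $s_i = \dd(p)$, while imposing only the weaker inequality $s_j \geq \dd(p)$. Writing $s_j = \dd(p) + \lambda_j$ with $\lambda_j \geq 0$ gives the candidate tangent $-p - \dd(p)(e_i + e_j) - \lambda_j e_j$. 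For case (1), since $-e_j$ is not in the recession cone of $F$, no pathological behavior from unbounded ends intervenes, and the Osserman--Payne lifting results of \S\ref{sec:lifting} allow every such candidate to lift. Closing over $p \in S$ yields the stated description of $S^*$.

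For case (2), where $-e_j$ lies in the recession cone of $F$, the analysis of non-minimal monomials from \S\ref{sec:NOT} becomes essential: the unbounded ends of $F$ contribute additional monomials to the tangent system which typically rigidify the inequality $s_j \geq \dd(p)$ into an equality, eliminating the $\lambda_j$ freedom at generic interior points of $S$. A local accounting then shows that the only points where $\lambda_j > 0$ still lifts are the interior vertices of $S'$, i.e.\ vertices of $S$ at which two non-$e_j$-parallel edges meet, since at such vertices the obstructing non-minimal contributions cancel combinatorially. The main obstacle will be this final step: one must verify precisely where the non-minimal contributions of \S\ref{sec:NOT} obstruct lifting and where they cancel, and check that the lifting criteria of \S\ref{sec:lifting} are compatible with $\lambda_j > 0$ exactly at these isolated vertices and nowhere else on $S$ or on the rest of $S'$.
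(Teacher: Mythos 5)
Your proposal correctly identifies the broad framework (consistency conditions from \S\ref{sec:consistency} plus Osserman--Payne lifting from \S\ref{sec:lifting}) and arrives at the right candidate form $-p - \dd(p)(e_i+e_j) - \lambda_j e_j$. However, there is a genuine gap in the middle that makes your treatment of case (1) wrong as stated, and leaves case (2) essentially unresolved.

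The key point you miss is that in this situation the consistent sequence cannot stop at $(J_0, J_1)$: since $\dim S = 2$ and $E_S$ consists of a single edge parallel to $e_j$, one finds $J_1 = \{j\}$, $K_1 = \emptyset$, and $I_2 = \{j\}$, so the sequence necessarily carries a $J_2$ term governed by the set $\A_2$ and the valuation level $\nu_2$. This imposes, for a \emph{fixed} $p$ in the interior of $S$, the constraint $\lambda_j \geq \nu_2 - \nu_1$ with equality forced unless $\#\A_2(p) = 2$. Your claim that ``the Osserman--Payne lifting results allow every such candidate to lift'' --- meaning all $\lambda_j \geq 0$ for each fixed $p$ --- is therefore false: for generic $p$ in $S$ the tangent set is a single point, not a ray. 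The union over all $p \in S$ does in the end give the stated $S^*$ in case (1), but that requires a convexity argument tracking how $\A_2(p)$ and $\nu_2 - \nu_1$ vary as $p$ ranges over $S$ (in particular $\nu_2 - \nu_1 \to 0$ toward the boundary, and $\#\A_2 = 2$ at special intermediate points), not a per-$p$ lifting of arbitrary $\lambda_j$. Indeed, if your per-$p$ claim were true it would equally yield $\{-p - \dd(p)(e_i+e_j) \mid p \in S\} - \RR_{\geq 0}\cdot e_j$ in case (2), contradicting the statement you are proving.

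For case (2), you correctly sense that the recession cone hypothesis rigidifies $\lambda_j$, but your proposed mechanism is off target and the details are deferred as ``the main obstacle.'' The analysis in \S\ref{sec:NOT} concerns $\A_1$, not $\A_2$, and so is not the right tool here. The actual argument for case (2) proceeds by showing (as in the end of the proof of Proposition \ref{prop:face2}) that the exponents satisfy $u_j = 0$ for $u \in \A_0$ and for those $u$ corresponding to edges of $F$ parallel to $e_j$, while $u_j = 1$ for edges not parallel to $e_j$; one then deduces that $\A_2(q - \lambda e_j)$ is constant as $\lambda$ grows, so $-p - \dd(p)(e_i+e_j) - (\nu_2-\nu_1)e_j$ collapses to a point of $-S'$, and $\#\A_2 > 1$ exactly at the interior vertices of $S'$. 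That identification of where the ``extra'' $\lambda_j$-freedom survives is the content you would still need to supply.
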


The remaining case is where $S\in \cS_F$ is an edge, and $E_S$ consists of two parallel edges. For this, we need additional notation. 
%For the final case, we need one more bit of notation. 
%Let $F$ be a face of $\trop(X)$ with generic valuations, and consider a one-dimensional $S\in \cS_F$ such that the two edges in $E_S$ are parallel. 
If $S$ is a ray, we may write it as $S=q+\RR_{\geq 0}\cdot v$ for some unique primitive lattice vector $v$. Let $E$ be the unique element of $E_q\setminus E_S$. Then $S$ is \emph{purely primitive} if $\dd_E(q+v)-\dd_E(q)=1$.

\begin{prop}[See \S \ref{face:3}]\label{prop:face4}
Let $F$ be a face of the smooth tropical surface $\trop(X)$ such that $\langle F \rangle=\langle e_i,e_j\rangle$,  $i\neq j$. Fix $S\in \cS_F$ intersecting the relative interior of $F$. 
	Assume that $\trop(X)$ has generic valuations.
 
Assume that $\dim S=1$ and the edges in $E_S$ are parallel.
Then the set $S^*$ consists of all 
		\begin{align*}
			-p-\dd(p)(e_i+e_j)-\lambda_ie_i-\lambda_je_j,
		\end{align*}
for $p\in S$, $\lambda_i,\lambda_j\in\RR_{\geq 0}$ subject to the following conditions:
\begin{enumerate}
\item If $S$ is bounded: no conditions.
\item If $S$ is a line: $\lambda_i=\lambda_j$.
\item If $S$ is a ray of the form $q+\RR_{\geq 0}\cdot (\alpha_ie_i+\alpha_je_j)$:

\vspace{.2cm}

\begin{center}
\begin{tabular}{|l|l|l|l|}
\hline
$\alpha_i$ & $\alpha_j$ & $S$ purely primitive & conditions\\
\hline
\multirow{2}{*}{$-1$}&
\multirow{2}{*}{$-1$}& yes & $\lambda_i=\lambda_j=0$ unless $p=q$, and $\lambda_i\lambda_j=0$\\
&& no & $\lambda_i=\lambda_j=0$ unless $p=q$\\
\hline
\multirow{2}{*}{$-1$}&
\multirow{2}{*}{$0$}& 
 yes & $p=q$ and $\lambda_i=0$\\
&& no & $p=q$\\
\hline
\multirow{2}{*}{$0$}&
\multirow{2}{*}{$-1$}& 
 yes & $p=q$ and $\lambda_j=0$\\
&& no & $p=q$\\
\hline
$-1$ & $<-1$ & no & $\lambda_j=0$ unless $p=q$\\
\hline
$<-1$ & $-1$ & no & $\lambda_i=0$ unless $p=q$\\
\hline
\multicolumn{3}{|l|}{all other cases}& $\lambda_i\lambda_j=0$\\
\hline
\end{tabular}
\end{center}
\vspace{.2cm}

\end{enumerate}

\end{prop}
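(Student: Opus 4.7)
The plan is to combine the tropical tangent framework developed in earlier sections (in particular the necessary conditions from \S\ref{sec:consistency} and \S\ref{sec:NOT}, and the sufficient/lifting criteria from \S\ref{sec:lifting}) with a careful case analysis based on the geometry of $S$ and the two parallel closest edges of $F$ in $E_S$. The global shape of the argument mirrors that of Propositions \ref{prop:face3} and \ref{prop:face5}: one first deduces the form $-p-\dd(p)(e_i+e_j)-\lambda_ie_i-\lambda_je_j$ as a necessary shape for any tangent, and then determines which pairs $(\lambda_i,\lambda_j)\in\RR_{\geq 0}^2$ are realizable by varying $p$ within $S$ and using the lifting theorem.

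First I would set up the local computation. Since $\langle F\rangle=\langle e_i,e_j\rangle$, the initial form $\mathrm{in}_p(f)$ at any $p$ in the relative interior of $F$ is a binomial whose exponent difference is (up to rescaling) a multiple of $e_i+e_j$. The tangent hyperplane at any lift $x\in X$ of $p$ is read from the gradient of $f$. Using the genericity hypothesis and the analysis of non-minimal monomials from \S\ref{sec:NOT}, the valuations of the partial derivatives $\partial f/\partial x_k$ at $x$ are controlled by $\dd(p)$ plus contributions from the monomials attached to the edges in $E_p$. Because $E_S$ consists of two \emph{parallel} edges, these two dominant contributions compete symmetrically, and the analysis of \S\ref{sec:edges} shows that any tropical tangent at $p$ must have the stated form, with the sign constraints $\lambda_i,\lambda_j\geq 0$ coming from the fact that the shift is driven by higher-order monomials that can only inflate valuations in the $e_i$ and $e_j$ directions.

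Next I would dispose of each geometric case for $S$. When $S$ is bounded, moving $p$ between its two endpoints provides enough one-parameter freedom to realize, via Osserman--Payne lifting, any $(\lambda_i,\lambda_j)\in\RR_{\geq 0}^2$, so no extra conditions appear. When $S$ is a full line, the two parallel closest edges dominate from both sides simultaneously and their contributions balance: equating valuations of the two competing minimal terms in each relevant partial derivative forces $\lambda_i=\lambda_j$. For a ray $S=q+\RR_{\geq 0}\cdot(\alpha_ie_i+\alpha_je_j)$, the behavior as $p\to\infty$ along $S$ governs the extra conditions. The direction $(\alpha_i,\alpha_j)$ determines the rate at which $\dd_E(p)$ for $E\in E_q\setminus E_S$ grows (or fails to grow) relative to $\dd(p)$, and whether $S$ is \emph{purely primitive} dictates whether the critical monomial attached to $E$ appears at lattice distance exactly $1$ beyond the subdivision edge. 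Each row of the table is then obtained by a direct valuation computation: for $(\alpha_i,\alpha_j)=(-1,-1)$ the third edge forces both $\lambda_i,\lambda_j$ to vanish except at the endpoint $p=q$, with purely primitive contributing the extra constraint $\lambda_i\lambda_j=0$; for $(-1,0)$ or $(0,-1)$ the third edge is "aligned" with one standard direction, forcing $p=q$ outright; the mixed cases $(-1,<-1)$ and $(<-1,-1)$ allow one of $\lambda_i,\lambda_j$ to vary freely off the vertex; and the remaining cases are handled by the generic $\lambda_i\lambda_j=0$ constraint coming from the competition between the two edges of $E_S$ and the third edge.

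The main obstacle will be keeping the bookkeeping straight in the ray case, since one must simultaneously track: the two parallel minimal-valuation monomials dual to $E_S$; the sub-minimal monomial(s) dual to the remaining edge(s) of $F$ meeting $S$ at its finite endpoint; and any further monomials whose valuation becomes competitive at lattice distance $1$ (the purely primitive case). My approach would be to reduce, by a monomial change of coordinates, to the standard situation where $\langle F \rangle=\langle e_1,e_2\rangle$ and $S$ is of a normalized form, compute the initial forms of the partial derivatives along the ray, and then apply the necessary/sufficient criteria for lifting established in \S\ref{sec:lifting}. Genericity of valuations ensures that no unexpected cancellation occurs among the monomials isolated by this analysis, so the conditions listed in the table are simultaneously necessary and sufficient.
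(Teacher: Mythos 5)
Your high-level plan — deduce the shape $-p-\dd(p)(e_i+e_j)-\lambda_ie_i-\lambda_je_j$ as necessary, then vary $p$ along $S$ and use Osserman--Payne lifting for sufficiency, with the case split driven by boundedness, direction of the ray, and pure primitivity — matches the paper's strategy, and your row-by-row reading of the table is correct. However, there is a concrete technical gap in how you propose to establish the constraints. You invoke the analysis of \S\ref{sec:edges}, but that section applies to points on \emph{edges} of $\trop(X)$, where consistent sequences terminate after $(J_0,J_1)$. Here $p$ lies in a two-face $F$ with $\langle F\rangle=\langle e_i,e_j\rangle$, so $I_1=\{i,j\}$, and because $\dim S=1$ with $E_S$ consisting of two parallel edges, the matrix $M^{(1)}$ has rank one and the consistent sequence continues to a third term $J_2$. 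The constraints on $(\lambda_i,\lambda_j)$ are not produced by the ``symmetric competition of the two dominant monomials'' you describe, but by the second-order data: $\A_2$, the gap $\nu_2-\nu_1$, and the single linear relation $z_j^{(2)}=z_j-\alpha z_i$ that it imposes. In particular, the dichotomy in the ray case (whether one obtains $\lambda_i\lambda_j=0$ or no additional constraint at the interior point) hinges on whether $\#\A_2(p)$ ever reaches $2$, which in turn is exactly what the purely primitive hypothesis controls; and the $\lambda_i=\lambda_j$ constraint when $S$ is a line comes from $\A_2=\emptyset$ forcing $z_j^{(2)}=0$, not from balancing partial derivative valuations.

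A second, smaller confusion: you attribute the $e_i+e_j$ shift to the exponent difference of the initial form binomial, but that difference $u-v$ for $u,v\in\A_0$ is \emph{transverse} to $\langle F\rangle$; the $\dd(p)(e_i+e_j)$ term instead arises from Proposition~\ref{prop:a1} (which identifies $\nu_1-\nu_0$ with the lattice distance $\dd(p)$) feeding into the length-three consistent sequence analysis, not from the initial form alone. Likewise, the nonnegativity of $\lambda_i,\lambda_j$ comes from $s^{(k)}_i\geq 0$ in Proposition~\ref{prop:consistency}, not from a heuristic about higher-order monomials inflating valuations. To make the proof go through you would need to explicitly work out, for each geometric case, which maximal consistent $J_1$ and $J_2$ arise, what $\A_2$ and $\nu_2-\nu_1$ look like along $S$, and then verify that the resulting lifting criteria (Proposition~\ref{prop:lifting}) hold with an appropriate choice of $L$ — this is the bookkeeping the paper's case table (cases A.0, A.1, A.2, B.1, B.2) is doing.
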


\begin{ex}[Quadric surface continued]
	We continue Example \ref{ex:surface}. 
	The faces we will consider are $F_2$, and $F_{i12}$, $F_{i13}$, $F_{i23}$ for $i=1,2$.

	We begin with $F_2$. The subdivision $\cS_{F_2}$ is pictured in Figure \ref{fig:face}, as are the contributions of each strata $S$ to the tropical tangents (where the colour of a cell in $\cS_{F_2}$ matches the colour of the corresponding cell in the dual picture). These are obtained by applying Propositions \ref{prop:face3} and \ref{prop:face5}. Combining them, we obtain 
	\[
		\conv\{e_1,e_2\}-\RR_{\geq 0}\cdot e_1-\RR_{\geq0}\cdot  e_2.
	\]

	A similar calculation shows that for $F_{i12}$, we only obtain $-v_i$ as a tropical tangent. For $F_{i13}$, we obtain
	\[
		-v_i-\RR_{\geq 0}\cdot (-e_0-e_i)-\RR_{\geq 0}\cdot (-e_0-e_i-e_3).
	\]
Finally, for $F_{i23}$ we obtain
\[
-v_i-\RR_{\geq 0}\cdot (-e_0-e_i)-\RR_{\geq 0}\cdot e_0.
\]
\end{ex}

\begin{figure}
	\begin{tikzpicture}
\begin{scope}
	\draw [color1,fill=color1] (-1,0) -- (-4,0) -- (-4,-4) -- (-1,-1) -- (-1,0);
	\draw [color2,fill=color2] (0,-1) -- (0,-4) -- (-4,-4) -- (-1,-1) -- (0,-1);
	\draw [color3,fill=color3] (-1,0) -- (0,-1) -- (-1,-1) -- (-1,0);
	\draw [line width=.5mm, color4] (0,-1) -- (-1,-1);
	\draw [line width=.5mm, color5] (-1,0) -- (-1,-1);
	\draw [line width=.5mm, color6] (-4,-4) -- (-1,-1);
	\draw[fill, black] (-1,0) circle [radius=0.05];
	\draw[fill, black] (0,-1) circle [radius=0.05];
	\draw[fill, color7] (-1,-1) circle [radius=0.1];
	\node [right] at (0,-1) {$-e_2$};
	\node [above] at (-1,0) {$-e_1$};
	\node at (-2,-4.5) {$\cS_F$};
\end{scope}
\begin{scope}[shift={(5,-2)},xscale=-1,yscale=-1]
	\draw [color3,fill=color3] (-1,0) -- (0,-1) -- (0,0) -- (-1,0);
	\draw [color1, line width=1mm] (2,0) -- (-1,0);
	\draw [color2, line width=1mm] (0,2) -- (0,-1);
	\draw[fill, black] (-1,0) circle [radius=0.05];
	\draw[fill, black] (0,0) circle [radius=0.05];
	\draw[fill, black] (0,-1) circle [radius=0.05];
	\node [above left]  at (0,0) {$0$};
	\node [right] at (0,-1) {$e_2$};
	\node [above right] at (-1,0) {$e_1$};
	\node at (0,2.5) {Tangents for $\dim S=2$};

\end{scope}

\begin{scope}[shift={(-2,-6.5)},xscale=-1,yscale=-1]
	\draw [fill, color4] (0,-1) -- (0,0) -- (2,0) -- (2,-1) -- (0,-1);
	\draw [fill, color5] (-1,0) -- (0,0) -- (0,2) -- (-1,2) -- (-1,0);
	\draw [line width=1mm, color6] (2,0) -- (0,0) -- (0,2);
	\draw[fill, black] (-1,0) circle [radius=0.05];
	\draw[fill, black] (0,0) circle [radius=0.05];
	\draw[fill, black] (0,-1) circle [radius=0.05];
	\node [above right] at (0,0) {$0$};
	\node [right] at (0,-1) {$e_2$};
	\node [above] at (-1,0) {$e_1$};
	\node at (0,3) {Tangents for $\dim S=1$};
\end{scope}
\begin{scope}[shift={(5,-6.5)},xscale=-1,yscale=-1]
	\draw [fill, color7] (0,0) -- (2,0) -- (2,2) -- (0,2) --  (0,0);
	\draw[fill, black] (0,0) circle [radius=0.05];
	\node [above right] at (0,0) {$0$};
	\node at (0,3) {Tangents for $\dim S=0$};
\end{scope}
	\end{tikzpicture}
	\caption{The subdivision $\cS_{F_2}$ and tropical tangents for the face $F_2$}\label{fig:face}
	
\end{figure}
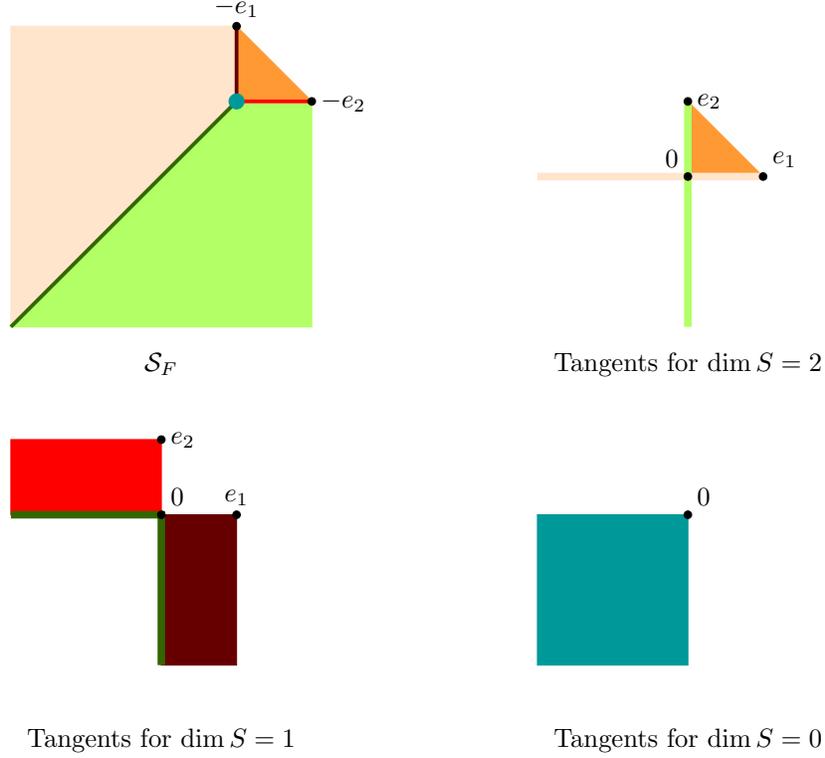

Putting this all together, we are able to describe the tropical variety $\trop(X^*)$ when $X$ is a surface:
\begin{thm}[See \S \ref{face:4}]\label{thm:surface}
	Let $X\subset \PP^3$ be a surface with $\trop(X)$ smooth with generic valuations. Then $\trop(X^*)$ is the union of the polyhedra in $\RR^3$ resulting from Theorems \ref{thm:vert} and \ref{thm:edge} and Propositions \ref{prop:face}, \ref{prop:face2}, \ref{prop:face3}, \ref{prop:face5}, and \ref{prop:face4}.
\end{thm}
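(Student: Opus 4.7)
The plan is to combine the stratum-by-stratum descriptions of tropical tangents already established in the preceding results. As explained in the introduction via the conormal variety $W_X$ and its projections to $X$ and $X^*$, the tropical dual $\trop(X^*) \subset \RR^3$ is the Euclidean closure of the set of tropical tangents to smooth points of $X \cap (\KK^*)^3$. Since each such smooth point of $X$ tropicalizes into the relative interior of a unique cell of the polyhedral complex $\trop(X)$, one may compute the contribution to $\trop(X^*)$ stratum by stratum and take a finite union.

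The three stratum types of a tropical surface are vertices, edges, and 2-faces. Theorem \ref{thm:vert} describes tropical tangents at vertices, and Theorem \ref{thm:edge} describes tropical tangents at edge-interior points. For a 2-face $F$, I would classify by the set $J_F = \{i \in \{0,1,2,3\} : e_i \in \langle F \rangle\}$: using the relation $e_0 = -e_1-e_2-e_3$, any three of $e_0, e_1, e_2, e_3$ are linearly independent, and since $\langle F \rangle$ is 2-dimensional we get $|J_F| \in \{0,1,2\}$. Proposition \ref{prop:face} covers $|J_F|=0$, Proposition \ref{prop:face2} covers $|J_F|=1$, and Propositions \ref{prop:face3}, \ref{prop:face5}, \ref{prop:face4} together cover $|J_F|=2$. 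In the last case one must further stratify according to the subdivision $\cS_F$ of $F$, since the propositions describe tropical tangents over each $S \in \cS_F$ separately, with a case analysis on $\dim(S)$ and on the mutual directions of the edges in $E_S$. Theorem \ref{thm:face} is then a direct corollary that packages together a clean subfamily of these contributions when the recession cone of $F$ avoids $\sum_{i \in J_F} \RR_{\geq 0}\cdot(-e_i)$.

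Assembling the contributions, the union of the finitely many polyhedra produced by these results gives a polyhedral complex in $\RR^3$ which already equals its own Euclidean closure, and hence equals $\trop(X^*)$. A small additional point to verify is that tropical tangents at a point on the boundary between two strata lie in the closure of the tangent sets at interior points of both adjacent strata, so restricting the cited propositions to relative interiors loses no information.

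The main obstacle, and the part requiring the most care, is verifying exhaustiveness of the case analysis for 2-faces when $|J_F|=2$: one must enumerate the possible values of $\dim(S)$ for $S \in \cS_F$, together with the possible parallelism relations among the edges in $E_S$, and check that Propositions \ref{prop:face3}, \ref{prop:face5}, \ref{prop:face4} really cover every configuration. The generic-valuations hypothesis is essential here, as it guarantees $|E_S| = 3 - \dim(S)$ for each $S$, which is precisely what makes the subdivision $\cS_F$ and the ensuing case split well-defined.
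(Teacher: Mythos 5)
Your proposal is correct and follows essentially the same route as the paper: combine the stratum-wise descriptions of tropical tangents (vertices via Theorem \ref{thm:vert}, edges via Theorem \ref{thm:edge}, and $2$-faces via the relevant proposition according to the cardinality of $J_F$), and observe that $\trop(X^*)$ must equal the closed union of these pieces. The one place where your justification differs slightly from the paper's is in arguing that $\trop(X^*)$ equals the Euclidean closure of the set of tropical tangents: you appeal to the introduction's density discussion for the conormal variety $W_X$, whereas the paper (mirroring the argument in Theorem \ref{thm:dualcurve}) instead notes directly that the only possible omissions are tangent hyperplanes at points of $X$ lying outside the torus $(\KK^*)^3$, that these form a subset of $\trop(X^*)$ of codimension at least one, and that removing a lower-dimensional set from the pure-dimensional polyhedral complex $\trop(X^*)$ and taking the closure recovers all of $\trop(X^*)$. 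Both justifications are valid; the paper's is slightly more self-contained since the density claim for $\trop(W_X)\to\trop(X^*)$ is only sketched in the introduction, so if you take that route you should at least note it rests on the general fact that tropicalization of a dominant map has dense image. Your extra care about stratum-boundary points and about exhaustiveness of the $|J_F|=2$ case analysis is welcome but not the crux of the argument.
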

\begin{ex}[Quadric surface continued]\label{ex:surfacea}
	We continue Example \ref{ex:surface}. 
Putting together the tropical tangents we have determined, we conclude that $\trop(X^*)$ has vertices $v_1'=e_1$ and $v_2'=e_2$, edges
\begin{align*}
&E_0'=\overline{v_1'v_2'}\\
&E_{i1}'=v_i'-\RR_{\geq 0}\cdot e_0\qquad &i=1,2\\
&E_{i2}'=v_i'-\RR_{\geq 0}\cdot e_j\qquad &i,j=1,2,\ j\neq i\\
&E_{i3}'=v_i'+\RR_{\geq 0}\cdot (e_0+e_i)\qquad &i=1,2\\
\end{align*}
There are nine two-faces, which are formed by the following convex hulls of edges:
\begin{align*}
&F_j': E_0',E_{1j}',E_{2j}' \qquad & j=1,2,3\\
&F_{ijk}': E_{ij}',E_{ik}' \qquad & i=1,2,\ j,k=1,2,3,\ j\neq k. 
\end{align*}
The contribution of each face of $\trop(X)$ is summarized in the following table:

\begin{align*}
\begin{array}{l @{\qquad\rightsquigarrow\qquad} l}
v_i & F_{i12}'\\
E_0 & F_1'\\
E_{i1} & E_{i1}'\\
E_{i2} & v_i'\\
E_{i3} & E_{i3}'\\
\end{array}
\qquad\qquad
\begin{array}{l @{\qquad\rightsquigarrow\qquad} l}
F_{1} & E_0'\\
F_{2} & F_{2}'\\
F_{3} & F_{3}'\\
F_{i12} & v_i'\\
F_{i13} & F_{i23}'\\
F_{i23} & F_{i13}'\\
\end{array}.
\end{align*}

Alternatively, one could compute using Macaulay2 \cite{M2} that $X^*$ is cut out by 
\[
ty_1y_2-ty_1y_3-ty_2y_3+y_0y_3+(t-1)y_3^2.
\]
The tropicalization of this variety agrees with the description of $\trop(X^*)$ above.
\end{ex}

\begin{rem}[Dual Defects]
Given an irreducible variety $X\subset \PP^n$, it is an important problem to determine the dimension of $X^*$. This is measured by the \emph{dual defect}:
\[
\defect(X)=n-\dim X^*-1.
\]
Typically, the defect is zero, that is, $X^*$ is a hypersurface. In fact, if $\defect(X)=k$, then $X$ is covered by $k$-planes \cite[\S 1]{tevelev}.

Any plane curve of degree larger than one will automatically have defect zero. The situation for surfaces in $\PP^3$ is more interesting. Since the dimesion of $\trop(X^*)$ equals that of $X^*$, we may use our Theorem \ref{thm:surface} to determine the dual defect of any surface $X\subset \PP^3$ for which $\trop(X)$ is smooth and has generic valuations. We illustrate this with two examples.
\end{rem}
\begin{ex}[Smooth quadric]\label{ex:smoothq}
The smooth quadric surface $X$ from Example \ref{ex:surface} is isomorphic to $\PP^1\times \PP^1$, and is ruled by lines, so it has the potential for being defective. However, we have seen in Example \ref{ex:surfacea} that its dual variety is also a surface, so it has defect zero.

Alternatively, after a change of coordinates, we might consider the smooth quadric $X=V(x_0x_1-x_2x_3)\subset\PP^3$.
The tropical variety $\trop(X)$ is the linear subspace $F$ of $\RR^3$ spanned by 
$e_1+e_2$ and $e_1+e_3$. This face $F$ satisfies $e_i\notin\langle F \rangle$ for all $i$, so Proposition \ref{prop:face} applies, and we conclude that $\trop(X^*)=-F$. In particular, $\dim \trop(X^*)=2$, so $\defect(X)=0$.

Note that  $X$ is an example of a toric variety. In \cite{sturmfels}, A.~Dickenstein, E.~Feichtner, and B.~Sturmfels show how to describe $\trop(X^*)$ for any toric variety.
\end{ex}
\begin{ex}[Quadric cone]\label{ex:cone}
Consider the singular quadric \[X=V(x_0^2+x_1x_2+x_1x_3)\] in $\PP^3$. Then $\trop(X)$ is the fan with lineality space $E$ generated by $e_1-e_2-e_3$, and three two-dimensional cones generated by $-e_2$, $-e_3$, and $-e_0$. 
It is smooth with generic valuations.

For the unique edge $E$ of $\trop(X)$, Theorem \ref{thm:edge} yields $-E$ in the tropical dual. For each two-face, the first case of Proposition \ref{prop:face2} applies, yielding $-E$ in the tropical dual: if $p\in F$ is of the form $q-\lambda e_i$ for some $\lambda>0$, then $\dd(p)=\lambda$, and $-p-\dd(p)e_i=-q$.
We conclude that $\dim(X^*)=1$, so $\defect(X)=1$. In fact, $X$ is a cone over a singular quadric, so $X^*$ is properly contained in a hyperplane (and thus defective).
\end{ex}

\subsection{Related work}\label{sec:related}
We have already mentioned the work of Z.~Izhakian \cite[\S 2.1]{izhakian}, which shows that for a hypersurface $X=V(f)$ with the lowest valuation parts of the coefficients of $f$ sufficiently generic, $\trop(X^*)$ may be determined from the tropicalization of $f$, although this is not made explicit except for the case of quadric curves. Groundbreaking work on tropical dual varieties was done by A.~Dickenstein, E.~Feichtner, and B.~Sturmfels, in which $\trop(X^*)$ was explicitly described for any projective \emph{toric} variety $X$, not limited to the case of $X$ being a hypersurface \cite{sturmfels}. 

In a slightly different setting, B.~Bertrand, E.~Brugall\'e, and G.~Mikhalkin study \emph{pretangencies} of tropical cycles in the plane to tropical morphisms \cite{brugalle}. \emph{Bitangents} of tropical plane curves have been studied by the second author together with M.~Baker, R.~Morrison, N.~Pflueger, and Q.~Ren \cite{bitangent1},
H.~Markwig \cite{LM17}, and H.~Lee \cite{bitangent3}.

The problem of describing the projective dual variety $X^*$ is one of implicitization, hence, the problem of describing the tropicalization $\trop(X^*)$ is one of \emph{tropical implicitization}. This has been studied by J.~Tevelev and B.~Sturmfels in the special case when the map being implicitized has an algebraic torus as its domain \cite[\S 5]{elimination}.

\section{Tropical Tangents}\label{sec:tangent}
\subsection{Tropical basics}\label{sec:basics}
Throughout this paper, we will consider a fixed hypersurface $X=V(f)\in\PP^n$ of degree $d$ for $f=\sum_{u\in\A} c_u x^u$. Here $\A$ is a subset of $\ZZ^{n+1}$, and for $u\in \A$, $c_u$ is a non-zero element of $\KK$. The Newton polytope of $f$ is 
\[
\Delta_X=\conv \A.
\]
We will frequently work with the affine cone $\widehat X$ of $X$ in $\KK^{n+1}$.

We have a tropicalization map
\begin{align*}
	\trop:(\KK^*)^{n+1}&\to \RR^{n+1}\\
(x_0,\ldots,x_n)&\mapsto(-\nu(x_0),\ldots,-\nu(x_n))
\end{align*}
and the tropicalization $\trop(\widehat{X})$ of $\widehat{X}$  is the Euclidean closure of the image of $\widehat X\cap (\KK^*)^{n+1}$ under the map $\trop$. 
The tropicalization $\trop(X)$ is the image of $\trop(\widehat{X})$ in 
\[\RR^{n+1}/\RR\cdot (1,\ldots,1)\cong\RR^n\]
where we identify $\RR^{n+1}/\RR\cdot (1,\ldots,1)$ with $\RR^n$ by projecting $\RR^{n+1}$ onto its $1$st through $n$th coordinates.
In $\RR^n$, we take $e_1,\ldots,e_n$ to be the standard basis vectors, and $e_0=\sum_{i=1}^n -e_i$. These are the images of the standard basis vectors of $\RR^{n+1}$ in our chosen identification of $\RR^{n+1}$ with $\RR^n$.

The tropical varieties $\trop(\widehat{X})$ and $\trop({X})$ are the support of pure codimension-one polyhedral complexes in $\RR^{n+1}$ and $\RR^n$, respectively. These complexes can be described quite explicitly using Kapranov's theorem, see e.g.~\cite[Theorem 3.1.3]{tropical}.\footnote{Note that we use a different sign convention than loc.~cit.}
For $p\in \RR^{n+1}$, let $\A(p)$ denote the subset of those $u\in \A$ for which 
\[ \nu(c_u)-\langle p,u \rangle\]
is minimal. 
Then 
\[
	\trop(\widehat X)=\{p\in \RR^{n+1}\ |\ \#\A(p)>1\}
\]
and the sets $\conv(\A(p))$ form a polyhedral subdivision of $\Delta_X$ which is dual to a polyhedral complex whose support is $\trop(\widehat X)$. Points $p$ and $q$ are in the same cell of this complex if $\A(p)=\A(q)$. We say that $\trop(X)$ is \emph{smooth} if the induced subdivision of $\Delta_X$ is a unimodular triangulation. 

\begin{ex}
	Consider the curve $X_2$ from Example \ref{ex:dualCurveExample}. We picture the subdivision of $\Delta_{X_2}$ by the sets $\A(p)$ in Figure \ref{fig:np}. For example, for $p=(0,3/2,1)$, we have $\A(p)=\{(1,1,1),(0,1,2)\}$. The subset of those $q\in \trop(\widehat{X_2})$ for which $\A(p)=\A(q)$ consists of the interior of
	\[
		\conv \{(0,1,1),(0,2,1)\}+\RR\cdot (1,1,1).
	\]
\end{ex}

\begin{figure}
	\begin{tikzpicture}[scale=1.6]
\draw (1,0)--(2,1)--(1,2)--(0,2)--(1,0);
\draw (1,0) -- (1,2);
\draw (0,2) -- (1,1) -- (2,1);
\node [below] at (1,0) {$x_0^2x_1$};
\node [right] at (2,1) {$t^2x_1^2x_2$};
\node [above right] at (1,1) {$x_0x_1x_2$};
\node [above] at (1,2) {$tx_1x_2^2$};
\node [left] at (0,2) {$x_0x_2^2$};
\end{tikzpicture}
\caption{The induced triangulation of $\Delta_{X_1}$}\label{fig:np}
\end{figure}
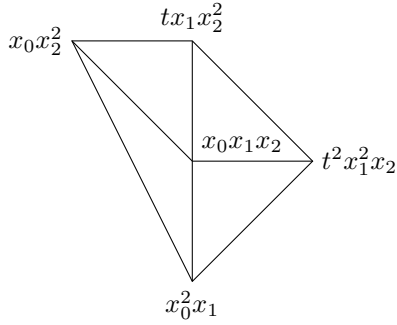

\begin{defn}\label{defn:gen}
We say that $\trop(X)$ has \emph{generic valuations} if for all $k\geq 1$, the set of all 
$p\in \trop(\widehat X)$ satisfying
\[
	\#\{\nu(c_u)-\langle p,u\rangle \ |\ u\in \A\}\leq \#\A-k
\]
has dimension at most $n+1-k$.
\end{defn}
\begin{rem}For any regular unimodular triangulation of $\Delta_X$, it is possible to find to find a hypersurface $Y$ with $\Delta_Y=\Delta_X$ which induces the same triangulation, and such that $\trop(Y)$ has generic valuations.  
\end{rem}

\subsection{Equations for tangents}
We are interested in determining the tropical tangents of $\trop(X)$:
\begin{defn}\label{def:tangent}
	A point $q\in \RR^n$ is a \emph{tropical tangent} at $p$ if there exists a smooth point $x\in X\cap (\KK^*)^n$ such that $\trop(x)=p$,  and the tangent hyperplane to $X$ at $x$ (viewed as a point of $(\PP^n)^*$) tropicalizes to $q$.
\end{defn}

For a smooth point $x\in X$, the tangent hyperplane has homogeneous coordinates $(f_0:f_1:\cdots:f_n)$, where $f_i$ denotes the partial derivative of $f$ with respect to $i$. Hence, a point $(y_0:\cdots:y_n)\in (\PP^n)^{*}$ is tangent to $X$ at a smooth point $x=(x_0:\cdots:x_n)\in \PP^n$ if and only if the system of equations
\begin{align*}
f(x)&=0\\
f_0(x)&=\lambda y_0\\
&\vdots\\
f_n(x)&=\lambda y_n
\end{align*}
is satisfied for some $\lambda\neq 0$. The point $x$ being smooth simply means that some $f_i$ is non-vanishing. In particular, if we restrict our attention to points $(y_0:\cdots:y_n)\in (\PP^n)^{*}\cap (\KK^*)^n$, a point $x$ satisfying the above equations is automatically a smooth point of $X$.

We now fix a point $p\in\trop(\widehat X)$, and will restrict our attention to those $x\in \widehat X$ with $\trop(x)=p$. 
Introducing new variables $z_i$, our previous system is equivalent to solving
\begin{align}
\begin{split}\label{eqn:main}
f(x)&=0\\
x_0f_0(x)&=z_0\\
&\vdots\\
x_nf_n(x)&=z_n
\end{split}
\end{align}
with the additional equations $z_i=x_iy_i\lambda$. 
We can rewrite \eqref{eqn:main} as 
\begin{equation}\label{matrixForma}
	\left(\begin{array}{c c c c c}
		1 &\cdots& 1& \cdots &1\\
		& \cdots & u_0 & \cdots &\\
		& & \vdots &  &\\
		& \cdots & u_n & \cdots &
	\end{array}\right)
	\left(\begin{array}{c}
		\vdots\\
		a_u\\
		\vdots
	\end{array}\right)+\HOT
	=\left(
	\begin{array}{c}
		0\\
		z_0\\
		\vdots\\
		z_n
	\end{array}\right)
\end{equation}
where the columns on the left are indexed by $u\in \A(p)$
and $\HOT$ denotes terms of valuation higher than
\[
\nu_0(p):=\nu(c_ux^u)=\nu(c_u)-\langle p,u\rangle
\]
for any $u\in \A(p)$.

Given a solution to \eqref{eqn:main} with $x=(x_0,\ldots,x_n)$ tropicalizing to $p$, set
\begin{equation}\label{eqn:s}
s_i=\nu(z_i)-\nu_0(p)
\end{equation}
for $i=0,\ldots,n$.
Considering \eqref{matrixForma}, we observe that $s_i\geq 0$.
By choosing $\lambda=a_u$ for any $u\in\A(p)$, we obtain that the solution $y$ tropicalizes to 
\[
\trop(y)=\trop(z_i)-\trop(\lambda)-\trop(x_i)=-p-s.
\]
Our question has now become: for what $s_i\geq 0$ is there a solution of \eqref{eqn:main} satisfying \eqref{eqn:s}?
It will be convenient to use the convention that $s_{-1}=\infty$.

\begin{rem}
When discussing tropical tangents of $\trop(X)$, it is convenient to fix a point in $\trop(X)$. On the other hand, when solving the above system of equations, it is more convenient to fix a point in $\trop(\widehat{X})$ which projects to the former point. We will occasionally abuse notation and use $p$ to denote both such points. In other words, we are fixing an arbitrary section of the map $\trop(\widehat{X})\to \trop(X)$.
\end{rem}

\section{Consistency}\label{sec:consistency}
To determine which $s_i$ are possible for solutions of \eqref{eqn:main}, we will transform this system of equations by inductively performing row operations on the matrix in the left hand side of \eqref{matrixForma}. 
\subsection{Motivating example}

Before showing how we will transform \eqref{eqn:main} in general, we present an example:
\begin{ex}\label{ex:mot}
Consider the cubic
	\[
		f=x_1x_2x_3+x_0x_1x_2+t(x_0x_1^2+x_0x_2^2+x_0^2x_1)
		\]
		where $t\in\KK$ is any element with $\nu(t)=1$, and set $\widehat X=V(f)\subset \KK^4$. Consider the image in $\trop(X)$ of those points $x\in \widehat X$ for which 
	\begin{align*}
\nu(x_1x_2x_3)=\nu(x_0x_1x_2)< \nu(tx_0x_1^2)= \nu(tx_0x_2^2)< \nu(tx_0^2x_1).
	\end{align*}
This is the set
	\[
		P=\{p=(p_0,p_1,p_2,p_3)\in\RR^4\ |\ p_3=p_0,p_1=p_2>p_0\}.
		\]
	We would like to determine the tropical tangents to any point in the image of $P$ in $\trop(X)$. We begin with the system \eqref{eqn:main}:
	\begin{align*}
		\begin{array}{l l  l@{}  l l l@{} l l l@{} l l l@{} l l l@{} l l}
			f	&	=	&&	x_1x_2x_3	&+&&	x_0x_1x_2	&+&&	tx_0x_1^2	&+&&	tx_0x_2^2	&+&&	tx_0^2x_1	&=0\\
			x_0f_0	&	=	&&			&&&	x_0x_1x_2	&+&&	tx_0x_1^2	&+&&	tx_0x_2^2	&+&2&	tx_0^2x_1	&=z_0\\
			x_1f_1	&	=	&&	x_1x_2x_3	&+&&	x_0x_1x_2	&+&2&	tx_0x_1^2	&&&			&+&&	tx_0^2x_1	&=z_1\\
			x_2f_2	&	=	&&	x_1x_2x_3	&+&&	x_0x_1x_2	&&&			&+&2&	tx_0x_2^2	&&&			&=z_2\\
			x_3f_3	&	=	&&	x_1x_2x_3	&&&			&&&			&&&			&&&			&=z_3\\
		\end{array}.
	\end{align*}
Subtracting the first row from the third and fourth, we obtain the equivalent system
	\begin{align*}
		\begin{array}{l l  l@{}  l l l@{} l l l@{} l l l@{} l l l@{} l l}
			f	&		=	&&	x_1x_2x_3	&+&&	x_0x_1x_2	&+&&	tx_0x_1^2	&+&&	tx_0x_2^2	&+&&	tx_0^2x_1	&=0\\
			x_0f_0	&	=		&&			&&&	x_0x_1x_2	&+&&	tx_0x_1^2	&+&&	tx_0x_2^2	&+&2&	tx_0^2x_1	&=z_0\\
			g_{1}^{(1)}	&	:=	&&			&&&			&&&	tx_0x_1^2	&+&-&	tx_0x_2^2	&&&			&=z_1\\
			g_{2}^{(1)}	&	:=	&&			&&&			&&-&	tx_0x_1^2	&+&&	tx_0x_2^2	&+&-&	tx_0^2x_1	&=z_2\\
			x_3f_3		&	=	&&	x_1x_2x_3	&&&			&&&			&&&			&&&			&=z_3\\
		\end{array}.
	\end{align*}
We modify this system once more, adding the third row to the fourth:
	\begin{equation*}
		\begin{split}\label{eqn:mot}	\begin{array}{l l  l@{}  l l l@{} l l l@{} l l l@{} l l l@{} l l}
			f	&		=	&&	x_1x_2x_3	&+&&	x_0x_1x_2	&+&&	tx_0x_1^2	&+&&	tx_0x_2^2	&+&&	tx_0^2x_1	&=0\\
			x_0f_0	&	=		&&			&&&	x_0x_1x_2	&+&&	tx_0x_1^2	&+&&	tx_0x_2^2	&+&2&	tx_0^2x_1	&=z_0\\
			g_{1}^{(1)}	&	=	&&			&&&			&&&	tx_0x_1^2	&+&-&	tx_0x_2^2	&&&			&=z_1\\
			g_{2}^{(2)}	&	:=	&&			&&&			&&&			&&&			&&-&	tx_0^2x_1	&=z_1+z_2\\
			x_3f_3		&	=	&&	x_1x_2x_3	&&&			&&&			&&&			&&&			&=z_3\\
		\end{array}.
	\end{split}\end{equation*}
By looking at the lowest order terms in this system of equations, we obtain that
\begin{align*}
\nu(z_0)=\nu(z_3)=\nu_0(p)\\
	\nu(z_1)\geq \nu(tx_0x_1^2):=\nu_1(p)\\
	\nu(z_1+z_2)= \nu(tx_0^2x_1):=\nu_2(p)
\end{align*}
which in turn implies
\begin{align*}
	&	\nu_1(p)\leq  \nu(z_1)=\nu(z_2) \leq \nu_2(p);& \textrm{or}\\
	&\nu_2(p)  \leq  \nu(z_1),\nu(z_2)\ \textrm{and}\ \nu_2(p)=\nu(z_i) \ \textrm{for either}\ i=1,2.
\end{align*}
	From our discussion of lifting in \S \ref{sec:lifting}, it will follow that these necessary conditions for tropical tangency are also sufficient after replacing $\leq$ by $<$, see Example \ref{ex:mot2}.
\end{ex}

\subsection{Consistent sequences}
We now describe how to transform our system \eqref{eqn:main} in general.
The following definition is key:
\begin{defn}
Consider a matrix $M$.
 A set $J$ indexing a subset of the rows of $M$ is $M$-\emph{consistent} if
\begin{enumerate}
\item the rowspan of $M_J$ contains no standard basis vector;
\item for $i\notin J$, the row vector $M_i$ is not in the rowspan of $M_J$.
\end{enumerate}
\end{defn}

We next set up notation:

\begin{align*}
g_i^{(0)}&=x_if_i\qquad &i=0,\ldots,n;\qquad g_{-1}^{(0)}=f\\
z_i^{(0)}&=z_i\qquad &i=0,\ldots,n;\qquad z_{-1}^{(0)}=0\\
u_i^{(0)}&=u_i\qquad &i=0,\ldots,n,\ u\in \A;\qquad u_{-1}^{(0)}=1\\
\A_{0}&=\A(p)\\
\nu_0&=\nu(a_u) \ \textrm{for any}\ u\in \A_0\\
I_{0}&=\{-1,\ldots,n\}
\end{align*}
We then let $M^{(0)}$ be the matrix with rows indexed by $I_0$ and columns indexed by $\A_0$, whose $iu$th entry is $u_i^{(0)}=u_i$.
Equation \eqref{eqn:main} now takes the form
\[
	g_i^{(0)}=z_i^{(0)}\qquad i=-1,\ldots,n
\]
and
\eqref{matrixForma} can be written as
\[
	M^{(0)}\cdot \left(\begin{array}{c} \vdots\\ c_ux^u\\\vdots\end{array}\right)+\HOT=\left(\begin{array}{c}\vdots\\ z_i^{(0)}\\\vdots\end{array}\right).
\]

We now proceed inductively on $m$.
Suppose that $J_{m}$ is an $M^{(m)}$-consistent set. In the special case $m=0$, we require that $-1\in J_{0}$. Let $K_{m}$ be the lexicographic first subset of $J_{m}$ for which 
\[\rank M_{K_{m}}^{(m)}=\rank M_{J_{m}}^{(m)}=\#K_{m}.\] 
We set
\[
I_{m+1}=J_m\setminus K_m.
\]
For each element $i\in I_{m+1}$, there are unique rational numbers $\alpha_{ij}^{(m)},j\in K_m$ such that
\[
	u_i^{(m)}=\sum_{j\in K_m} \alpha_{ij}^{(m)} u_j^{(m)}
\]
for all $u\in\A_m(p)$.
As long as $I_{m+1}\neq \emptyset$,
set
\[
	\A_{m+1}=\{u\in \A\ |\ 	u_i^{(m)}-\sum_{j\in K_m} \alpha_{ij}^{(m)} u_j^{(m)}\neq 0\ \textrm{for some}\ i\in I_{m+1}, \ \textrm{and}\ \nu(c_ux^u)\ \textrm{minimal}\}.
\]
Using this, we define
\begin{align*}
	\nu_{m+1}&=\nu(c_ux^u) \ \textrm{for any}\ u\in \A_{m+1}\\
g_i^{(m+1)}&=g_i^{(m)}-\sum_{j\in K_m}\alpha_{ij}^{(m)} g_j^{(m)}\qquad i\in I_{m+1}\\
z_i^{(m+1)}&=z_i^{(m)}-\sum_{j\in K_m}\alpha_{ij}^{(m)} z_j^{(m)}\qquad i\in I_{m+1}\\
u_i^{(m+1)}&=u_i^{(m)}-\sum_{j\in K_m}\alpha_{ij}^{(m)} u_j^{(m)}\qquad i\in I_{m+1}\\
M^{(m+1)}&=(M_{iu}^{(m+1)}),\qquad M_{iu}^{(m+1)}=u_i^{(m+1)}\qquad&i\in I_{m+1};\qquad u\in\A_{m+1}.
\end{align*}
\begin{defn}
We call a sequence $(J_0,J_1,\ldots,J_m)$ arising in this fashion a \emph{consistent sequence}.
A consistent sequence $(J_0,\ldots,J_m)$ is \emph{maximal} if for all $0\leq i \leq m$, $J_i$ is a maximal $M^{(i)}$-consistent subset of $I_i$.
\end{defn}
\begin{ex}\label{ex:mot2}
	In Example \ref{ex:mot}, the set $J_0=\{-1,1,2\}$ is the unique maximal consistent set for 
	\[
		M^{(0)}=\left(\begin{array}{c c}
			1 & 1\\
			0 & 1\\
			1 & 1\\
			1 & 1\\
			1 & 0		
		\end{array}\right).
		\]
	We obtain $K_0=\{-1\}$, $I_1=\{1,2\}$, $\A_1=\{(1,2,0,0),(1,0,2,0)\}$,  and 
	\[		M^{(1)}=\left(\begin{array}{c c}
			1 & -1\\
			-1 & 1\\
		\end{array}\right).
	\]
	The set $J_1=\{1,2\}$ is the unique maximal consistent set for $M^{(1)}$, leading to 
	$K_1=\{1\}$, $I_2=\{2\}$, $\A_2=\{(2,1,0,0)\}$, $z_2^{(2)}=z_1+z_2$, and 
	\[		M^{(2)}=\left(\begin{array}{c}
			 -1\\
		\end{array}\right).
	\]
	The unique maximal consistent set for $M^{(2)}$ is $J_2=\emptyset$, so the sequence $(J_0,J_1,J_2)$ is a maximal consistent sequence.
\end{ex}

For a consistent sequence $(J_0,\ldots,J_{m})$, we will consider the new system of equations
\begin{align*}
\begin{split}
	g_i^{(k)}&=z_i^{(k)}\qquad k\leq m\ \textrm{and}\ i\in (I_k\setminus I_{k+1})\\
	g_i^{(m+1)}&=z_i^{(m+1)}\qquad i\in I_{m+1}.\\
\end{split}\tag{$G^{(m+1)}$}
\end{align*}
Notice that $G^{(0)}$ is our original system of equations \eqref{eqn:main}, and each system $G^{(m+1)}$ is equivalent to the previous one $G^{(m)}$.
If we omit the equations 
\begin{align*}
	g_i^{(k)}&=z_i^{(k)}\qquad k\leq m\ \textrm{and}\ i\in I_k\setminus J_k\\
	g_i^{(m+1)}&=z_i^{(m+1)}\qquad i\in I_{m+1}
\end{align*}
	the system  $G^{(m+1)}$ can be written as 
\begin{equation}\label{eqn:soematrix}
	\left(\begin{array}{ c | c | c | c }
		M_{K_0}^{(0)}& * & \cdots&\cdots \\
		 0 & M_{K_1}^{(1)}&* &\cdots \\
		 0& 0 & \ddots& \\
		 0& 0 &0 & M_{K_m}^{(m)}\\
	\end{array}\right)
	\left(\begin{array}{c}
		\vdots\\
		c_ux^u\\
		\vdots
	\end{array}\right)+\HOT
	=\left(
	\begin{array}{c}
		z_i^{(0)}\\
\hline
		z_i^{(1)}\\
\hline
\vdots		\\
\hline
z_i^{(m)}\\
	\end{array}\right)
\end{equation}
where the rows are indexed by $K_0\cup K_1\cup \ldots \cup K_m$ and $\HOT$ denotes terms in the $K_i$th rows whose valuation are strictly larger than $\nu_i$.
We set $s_i^{(k)}=\nu(z_i^{(k)})-\nu_k$.

\begin{ex}
	The system of equations that we obtain from the consistent sequence $(J_0,J_1,J_2)$ in Example \ref{ex:mot2} is exactly the final system of equations in Example \ref{ex:mot}.
\end{ex}

\begin{prop}\label{prop:consistency}
	Suppose that the system $G^{(0)}$ has a solution, and let
	$J_0$ consist of those $j\in I_0$ such that $s_j^{(0)}\neq 0$. Then $J_0$ is consistent for $M^{(0)}$. More generally, consider a consistent sequence $(J_0,\ldots,J_{m-1})$ and suppose that the system $G^{(m)}$ has a solution.
Let
\[
	J_m=\{j\in I_m\ |\ s_j^{(m)}\neq 0\}.
\]
Then $J_m$ is consistent.
\end{prop}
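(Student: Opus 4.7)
The plan is to prove the general statement by induction on $m$; the base case $m=0$ is structurally identical to the inductive step, so I would treat them simultaneously. The whole argument will be a direct comparison of valuations in the linear system \eqref{eqn:soematrix}, turning on a single preliminary observation: for every $i\in I_m$,
\[
g_i^{(m)} \;=\; \sum_{u \in \A} c_u x^u \cdot u_i^{(m)},
\]
which I would verify by a short induction on $m$ from the recursive definitions of $g_i^{(m)}$ and $u_i^{(m)}$. Coupled with the definition of $\A_m$, this gives the dichotomy I need: for $u \in \A_m$ one has $\nu(c_u x^u) = \nu_m$, while for $u \in \A\setminus \A_m$ either $u_i^{(m)}$ vanishes for every $i \in I_m$ (contributing nothing to any $g_i^{(m)}$) or $\nu(c_u x^u) > \nu_m$.

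I would then verify the two consistency conditions separately, in each case by contradiction. For condition (1), suppose $e_{u_0} = \sum_{j \in J_m} \gamma_j M^{(m)}_j$ for some $u_0\in\A_m$, with $\gamma_j\in\QQ$. Taking the corresponding $\KK$-linear combination of the equations $g_j^{(m)} = z_j^{(m)}$ and invoking the dichotomy, the left-hand side collapses to $c_{u_0} x^{u_0}$ plus a tail all of whose nonzero terms have valuation strictly greater than $\nu_m$; hence $\nu(\mathrm{LHS}) = \nu_m$. The right-hand side, by the very definition of $J_m$, is a $\KK$-combination of elements $z_j^{(m)}$ each of valuation $> \nu_m$, so $\nu(\mathrm{RHS}) > \nu_m$, contradicting the equality. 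Condition (2) follows from an analogous cancellation: if $M_i^{(m)} = \sum_{j\in J_m}\gamma_j M_j^{(m)}$ with $i \in I_m\setminus J_m$, then
\[
g_i^{(m)} - \sum_{j \in J_m} \gamma_j\, g_j^{(m)} \;=\; z_i^{(m)} - \sum_{j \in J_m} \gamma_j\, z_j^{(m)},
\]
the left-hand side has valuation $> \nu_m$ (all $\A_m$-contributions cancel, leaving only terms from $\A\setminus\A_m$), whereas the right-hand side has valuation exactly $\nu_m$ because $\nu(z_i^{(m)}) = \nu_m$ dominates the remaining sum.

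The main obstacle I anticipate is not analytic but bookkeeping: I need to check carefully that the identity $g_i^{(m)} = \sum_u c_u x^u \cdot u_i^{(m)}$ passes cleanly from level $m$ to level $m+1$, and that the definition of $\A_{m+1}$ as the minimal-valuation support of the residual column vectors is exactly what keeps the dichotomy alive at the next level. Once that induction is pinned down, the rest is the two-line valuation comparison sketched above.
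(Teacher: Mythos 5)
Your proof is correct and takes essentially the same route as the paper's: both arguments work with the subsystem $g_i^{(m)}=z_i^{(m)}$ for $i\in I_m$, form the $\KK$-linear combination prescribed by the hypothesized linear dependence among rows of $M^{(m)}$, and reach a contradiction by comparing valuations of the two sides. The one thing you make explicit that the paper leaves implicit is the identity $g_i^{(m)}=\sum_{u\in\A}c_ux^u\,u_i^{(m)}$ and the attendant dichotomy on $\nu(c_ux^u)$; this is a worthwhile clarification (it is exactly what justifies the ``$\HOT$'' bookkeeping behind \eqref{eqn:soematrix}), but it does not change the shape of the argument.
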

\begin{proof}
	We consider the subsystem of $G^{(m)}$ given by $g_i^{(m)}=z_i^{(m)}$ for $i\in I_m$. 
Let $e_v$ be the row vector in $\RR^{\A_m}$ with $v$-th coordinate $1$, and all other coordinates $0$. 
If $e_v$ is a combination $\sum_{j\in J_m} a_jM^{(m)}_j$ of the rows of $M_{J_m}^{(m)}$,
then the valuation of the only monomial $c_vx_v$ in 
\[
	\sum_{j \in J_m} a_jg_j^{(m)} 
\]
	is $\nu_m$. On the other hand, 
\[
	\sum_{j \in J_m} a_jz_j^{(m)} 
\]
has valuation strictly larger than $\nu_m$, contradicting
\[
	\sum_{j \in J_m} a_jg_j^{(m)} =	\sum_{j \in J_m} a_jz_j^{(m)}.
\]
Hence, the rowspan of $M_{J_m}^{(m)}$ contains no standard basis vector.

On the other hand, suppose that for some $i\in I_m$, 
\[
	M_{i}^{(m)}=\sum_{j\in J_m} a_jM_{j}^{(m)}.
\]
Then the valuation of 
\[
	\sum_{j\in J_m} a_jg_j^{(m)}
\]
is strictly larger than $\nu_m$, so $i\in J_m$.
\end{proof}

\begin{rem}
The above proposition gives us necessary conditions that the $s_i^{(k)}$ must satisfy.
We may obtain inequalities on the original $s_i$ as follows. Fix a consistent sequence $(J_0,\ldots,J_m)$.
Consider the linear system of equations
\[
z_i^{(k+1)}=z_i^{(k)}-\sum_{j\in K_k} \alpha_{ij}^{(k)}z_j^{(k)}
\]
for $k\leq m$ and $i\in I_{k+1}$. Let $Z$ be the corresponding linear space.
 We obtain inequalities on $s_i=s_i^{(0)}$ by intersecting $\trop(Z)$ with $s_j^{(k)}\geq 0$, and $s_j^{(k)}=0$ for $j\in I_k\setminus J_k$, and projecting. 
\end{rem}

\subsection{Consistency and geometry of $\trop(X)$}
We now interpret consistency geometrically. Assume that $\trop(X)$ is smooth.
Fix any $k$-face $P\subset \trop(X)$. Let $\cJ(P)$ be the set consisting of the maximal $J\subset \{0,\ldots,n\}$ such that 
 for every $(k+1)$-face $Q$ adjacent to $P$,
\[
\langle Q \rangle \cap \langle J \rangle = \langle P \rangle \cap \langle J \rangle.
\]

\begin{lemma}\label{lemma:geco}
Let $p\in\trop(\widehat{X})$ map to the relative interior of $P$. Then for $J\in \cJ(P)$, $J^{+}=J\cup\{-1\}$ is a maximal $M^{(0)}$-consistent subset, and all maximal consistent subsets are of this form.
\end{lemma}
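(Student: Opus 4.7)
My plan is to build a dictionary between the linear-algebraic conditions defining $M^{(0)}$-consistency and the geometric conditions defining $\cJ(P)$, and then match maximality across. First, I would identify the left nullspace of $M^{(0)}$ with $\langle \widehat{P}\rangle$, where $\widehat{P}\subset\trop(\widehat{X})$ is the cell lying over $P$: a row covector $(\alpha_{-1},\alpha_0,\ldots,\alpha_n)$ annihilates $M^{(0)}$ iff $\alpha_{-1}+\sum_i \alpha_i u_i = 0$ for every $u\in\A(p)$, which exactly says $(\alpha_0,\ldots,\alpha_n)\in (\A(p)-\A(p))^{\perp}=\langle\widehat{P}\rangle$ (with $\alpha_{-1}$ then determined). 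Restricting to the submatrix indexed by $J^+$ picks out the subspace $\langle\widehat{P}\rangle\cap\mathrm{span}\{\tilde e_j : j\in J\}$, where $\tilde e_0,\ldots,\tilde e_n$ are the standard basis of $\RR^{n+1}$. Passing through the quotient $\iota\colon\RR^{n+1}\to\RR^n$ whose kernel $\RR\cdot(1,\ldots,1)$ lies inside $\langle\widehat{P}\rangle$, this identifies canonically with $\langle P\rangle\cap\langle J\rangle$.

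Next I would translate the two consistency conditions. A standard basis vector $e_v\in\RR^{\A(p)}$ lies in the rowspan of $M^{(0)}_{J^+}$ precisely when the barycentric function $\phi_v$ on $\A(p)$ factors through the projection $\pi_J$ onto the $J$-coordinates, equivalently when $\pi_J(v)\notin\mathrm{aff}(\pi_J(\A(p)\setminus\{v\}))$. On the geometric side, the $(k+1)$-faces $Q$ adjacent to $P$ correspond bijectively to the vertices $u^*_Q$ of the unimodular simplex $\A(p)$ (removing one vertex to obtain $\A_Q$), with new primitive direction $\hat v_Q\in(\A_Q-\A_Q)^{\perp}\setminus\langle\widehat{P}\rangle$. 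A short duality argument shows $v_Q\in\langle P\rangle+\langle J\rangle$ iff some vector $\sum_{j\in J}c_j\tilde e_j$ lies in $(\A_Q-\A_Q)^{\perp}$ but not $(\A(p)-\A(p))^{\perp}$, which is exactly the separation statement $\pi_J(u^*_Q)\notin\mathrm{aff}(\pi_J(\A_Q))$. Consequently condition~(1) of $M^{(0)}$-consistency is equivalent, quantifier for quantifier, to the $\cJ(P)$-condition on $J$.

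For condition~(2) and maximality, the dictionary translates $M^{(0)}_i\in\mathrm{rowspan}(M^{(0)}_{J^+})$ into $\langle P\rangle\cap\langle J\cup\{i\}\rangle\supsetneq\langle P\rangle\cap\langle J\rangle$, equivalently $e_i\in\langle P\rangle+\langle J\rangle$. So condition~(2) says $J$ is saturated: it contains every index $i$ for which $e_i$ already belongs to $\langle P\rangle+\langle J\rangle$. Such saturation is automatic for any $J\in\cJ(P)$, since any such $i$ could be adjoined without enlarging $\langle P\rangle+\langle J\rangle$ and hence without breaking the $\cJ(P)$-condition, contradicting maximality. Run the other way, this shows that a maximal consistent $J^+$ is necessarily of the form $J\cup\{-1\}$ with $J\in\cJ(P)$; the identity $(1,\ldots,1)=d^{-1}\sum_{i=0}^n M^{(0)}_i$, valid because every monomial of $f$ has common degree $d$, guarantees that $-1$ must lie in every saturated consistent subset. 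The main obstacle will be the second step — making the duality between $v_Q$ entering $\langle P\rangle+\langle J\rangle$ and $\pi_J(u^*_Q)$ entering $\mathrm{aff}(\pi_J(\A_Q))$ precise; once that is established, the remaining equivalences are bookkeeping.
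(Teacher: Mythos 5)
The approach is fundamentally the same as the paper's: both arguments dualize the consistency conditions on $M^{(0)}$ to geometric conditions on the $(k+1)$-faces adjacent to $P$, using the unimodular triangulation dual to $\trop(X)$. Your route through the left nullspace of $M^{(0)}$ identified with $\langle\widehat{P}\rangle$ is an equivalent reformulation of the paper's direct manipulation of the rowspan, and the separation/duality step you flag as "the main obstacle" is precisely the step the paper dispatches in one sentence by invoking smoothness — so that is not a real difference in method.

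There is, however, a genuine gap in your Euler-relation argument for why $-1$ belongs to every maximal consistent subset. The identity $(1,\ldots,1)=d^{-1}\sum_{i=0}^n M^{(0)}_i$ places $M^{(0)}_{-1}$ in the rowspan of $M^{(0)}_K$ only when $K\supseteq\{0,\ldots,n\}$; a saturated consistent $K$ that is a proper subset of $\{0,\ldots,n\}$ escapes the argument. Such $K$ do occur. Take $n=2$ and $\A(p)=\{(2,1,0),(1,2,0)\}$, a boundary edge of the standard unimodular triangulation of the degree-$3$ simplex, so $P$ is an unbounded edge of a smooth tropical plane cubic. Then for $K=\{0,2\}$ the rowspan of $M^{(0)}_K$ is $\RR\cdot(2,1)$, which contains neither standard basis vector of $\RR^{\A(p)}$ nor $M^{(0)}_{-1}=(1,1)$ nor $M^{(0)}_1=(1,2)$; so $K$ is consistent and saturated, it is maximal (adjoining $-1$ or $1$ makes the rowspan all of $\RR^2$), and $-1\notin K$. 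What actually makes the lemma true is not the Euler relation but the convention built into the framework: the definition of consistent sequence stipulates $-1\in J_0$, and Proposition~\ref{prop:consistency} only ever produces $J_0$ containing $-1$ because $s_{-1}=\infty\neq 0$ by convention. The clause "all maximal consistent subsets are of this form" should therefore be read as "all maximal elements among $M^{(0)}$-consistent subsets containing $-1$"; under that reading your dictionary yields the lemma without the Euler step, which should simply be deleted.
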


\begin{proof}
The rowspan of $M^{(0)}_{J^{+}}$ contains a standard basis vector $e_v$ if and only if there is $w\in \RR^{J^{+}}$ such that 
\[
\sum_{j\in J^{+}} w_ju_j=0
\]
for $u\in \A(p)\setminus\{v\}$, and $\sum_j w_jv_j=1$. Since $\trop(X)$ is smooth, this is equivalent to the existence of an adjacent $(k+1)$-face $Q$ with 
\[\langle Q \rangle=\langle \sum_{j\in J} w_je_j\rangle +\langle P \rangle.\]
Hence, $\langle Q \rangle \cap \langle J \rangle= \langle F \rangle \cap \langle J \rangle$ for all adjacent $(k+1)$-faces $F$ if and only if the rowspan of $M^{(0)}_{J^{+}}$ contains no standard basis vector.

Furthermore, since the sets $J$ we consider are chosen maximally, this guarantees that for $i\notin J^{+}$, the row vector $M_{i}^{(0)}$ is not in the rowspan of $M_{J^{+}}^{(0)}$.
\end{proof}

\section{Lifting}\label{sec:lifting}
By transforming the system of equations \eqref{eqn:main} as in \S \ref{sec:consistency}, we may obtain necessary conditions that the $s_i$ must satisfy for any tropical tangent. We wish to show that in at least some situations, any $s_i$ fulfilling these conditions does in fact arise from a tropical tangent.
\subsection{Osserman-Payne lifting}
We will make use of the following lifting result of B.~Osserman and S.~Payne. Let $Y$ and $Y'$ be subvarieties of $(\KK^*)^n$. We say that $\trop(Y)$ and $\trop(Y')$ \emph{intersect properly} at a point $q\in\trop(Y)\cap\trop(Y')$ if $\trop(Y)\cap\trop(Y')$ has codimension $\codim Y+\codim Y'$ in a neighborhood of the point $q$.
\begin{thm}[{\cite[Theorem 1.1]{lifting}}]\label{thm:lifting}
If $\trop(Y)$ and $\trop(Y')$ intersect properly at $q$, then $q\in\trop(Y\cap Y')$.
\end{thm}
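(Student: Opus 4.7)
The statement is the Osserman--Payne lifting theorem, so the plan is to sketch the standard proof via initial degenerations. The idea is to reduce the global lifting problem to a nonemptiness question over the residue field, where the proper intersection hypothesis can be leveraged through a dimension count in a toric compactification.

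First I would localize at $q$. By the Fundamental Theorem of Tropical Geometry, $q \in \trop(Y \cap Y')$ if and only if the intersection of initial degenerations $\mathrm{in}_q(Y) \cap \mathrm{in}_q(Y')$ is a nonempty subscheme of the torus over the residue field $\widetilde{\KK}$. Moreover, the tropicalization of $\mathrm{in}_q(Y)$ is canonically identified (after translation) with the star of $\trop(Y)$ at $q$, and similarly for $Y'$. So one can replace the triple $(Y, Y', q)$ by $(\mathrm{in}_q Y, \mathrm{in}_q Y', 0)$, where the new tropical varieties are rational fans whose intersection at the origin is proper in the sense of the hypothesis.

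Second, I would set up the dimension count in a toric compactification. After refinement, choose a complete, simplicial toric variety $\overline{T}$ whose fan simultaneously refines both $\trop(\mathrm{in}_q Y)$ and $\trop(\mathrm{in}_q Y')$. The closures $\overline{\mathrm{in}_q Y}$ and $\overline{\mathrm{in}_q Y'}$ in $\overline{T}$ are subvarieties of complementary codimension sums, and their intersection classes can be computed on the Chow ring of $\overline{T}$ using the Fulton--Sturmfels description. The proper intersection hypothesis says exactly that the fans meet in the expected codimension, so the top intersection number picks up contributions only from cones in the common refinement that lie in the dense torus orbit. This forces the class of $\overline{\mathrm{in}_q Y} \cdot \overline{\mathrm{in}_q Y'}$ to have nonzero pushforward from the dense torus.

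The main obstacle is the final step: ruling out that every scheme-theoretic intersection point lies in the toric boundary. Concretely, one must show that the stars of the tropicalizations along boundary orbits cannot absorb the full intersection when the proper intersection happens at the origin. I would handle this by a semicontinuity or deformation argument, perturbing by a generic one-parameter subgroup and using that tropicalization is preserved under translation by elements of the torus with valuation zero, then appealing to the projection formula to verify that the intersection persists in the torus. Once nonemptiness in the dense torus over $\widetilde{\KK}$ is secured, the Fundamental Theorem lifts this back to a $\KK$-point of $Y \cap Y'$ whose tropicalization is $q$, completing the proof.
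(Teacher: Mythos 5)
The paper does not prove this statement at all: it is Theorem~1.1 of Osserman and Payne \cite{lifting}, cited as a black box and used as the engine behind Corollary~\ref{cor:lifting}. There is therefore no ``paper's own proof'' to compare against; your task here is really to reconstruct the Osserman--Payne argument.

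Your sketch does follow the broad contours of that argument correctly: reduce to the trivial-valuation case via initial degenerations and the Fundamental Theorem of Tropical Geometry; pass to a toric compactification whose fan refines both tropicalizations; compute intersection classes via Fulton--Sturmfels; and use translation by a generic torus element of trivial valuation to push the scheme-theoretic intersection off the boundary. This is the right skeleton. However, a few steps as phrased would not survive scrutiny. In your second step, saying the proper-intersection hypothesis ``forces the class \ldots to have nonzero pushforward from the dense torus'' conflates two different things: the Fulton--Sturmfels computation tells you the intersection number on the compactification is positive, but the dense torus is open, so there is no pushforward from it; what one actually needs is to show that after a suitable generic translation the \emph{cycle} $\overline{\mathrm{in}_q Y} \cdot \overline{\mathrm{in}_q Y'}$ cannot be entirely supported on the toric boundary. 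Osserman--Payne accomplish this by a dimension count: properness of the tropical intersection, combined with the fan structure of the compactification, bounds the dimension of the boundary intersection strictly below what would be needed to account for the whole intersection class, and generic translation (which preserves tropicalizations since the translating element has trivial valuation) then exhibits a torus point. Your final paragraph gestures at this with ``semicontinuity or deformation argument'' and ``projection formula'' but never states the dimension estimate on the boundary strata that actually makes the argument close; as written, the claim that the intersection ``persists in the torus'' is an assertion, not a deduction. If you intend to present this as a proof rather than a citation, you would need to make that boundary dimension bound explicit, e.g.\ along the lines of Osserman--Payne's Lemma~4.2.2 and Proposition~4.4.2 in \cite{lifting}.
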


We will use this result in the following incarnation:
\begin{cor}
Let $Y_1,\ldots,Y_k$ be hypersurfaces in $(\KK^*)^n$, and 
\[q\in \trop(Y_1)\cap \ldots\cap \trop(Y_k).\] Assume that in a neighborhood of $q$, the codimension of $\trop(Y_1)\cap \ldots\cap \trop(Y_k)$ is equal to $k$.
Then there exists $y\in Y_1\cap\ldots\cap Y_k$ with $\trop(y)=q$.
\end{cor}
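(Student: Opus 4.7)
I would prove this by induction on $k$. The base case $k=1$ is immediate from the definition of tropicalization: $q \in \trop(Y_1)$ means there exists $y \in Y_1 \cap (\KK^*)^n$ with $\trop(y) = q$.

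For the inductive step with $k \geq 2$, set $W := Y_1 \cap \cdots \cap Y_{k-1}$ and abbreviate $Z_i := \trop(Y_1) \cap \cdots \cap \trop(Y_i)$. The plan is to apply Theorem~\ref{thm:lifting} to $W$ and $Y_k$, which requires verifying that $\trop(W)$ and $\trop(Y_k)$ intersect properly at $q$.

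The key bookkeeping step is to show that $\codim Z_i = i$ near $q$ for every $1 \leq i \leq k$. Intersecting a tropical cycle with a codimension-one tropical hypersurface can increase the codimension by at most one (a local fact about balanced polyhedral complexes). Applied to the chain $\RR^n = Z_0 \supseteq Z_1 \supseteq \cdots \supseteq Z_k$ with terminal codimension $k$ by hypothesis, this forces $\codim Z_i = i$ at every step near $q$. In particular, the induction hypothesis applies to the pair $(Y_1, \ldots, Y_{k-1}, q)$ and yields a point $y \in W$ with $\trop(y) = q$.

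Next, I would argue that $\trop(W)$ has codimension exactly $k-1$ near $q$: the inclusion $\trop(W) \subseteq Z_{k-1}$ gives $\codim \trop(W) \geq k-1$, while $W$ being cut out by $k-1$ equations in the $n$-dimensional torus gives $\codim \trop(W) = \codim W \leq k-1$. Then $\trop(W) \cap \trop(Y_k) \subseteq Z_k$ yields codimension at least $k$ near $q$, while the general tropical intersection bound $\codim(V \cap V') \leq \codim V + \codim V'$ at a common point gives codimension at most $k$. Hence $\trop(W)$ and $\trop(Y_k)$ meet properly at $q$, and Theorem~\ref{thm:lifting} produces the desired $y' \in W \cap Y_k = Y_1 \cap \cdots \cap Y_k$ with $\trop(y') = q$.

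The main obstacle I anticipate is the clean invocation of the two tropical codimension facts above — that codimension rises by at most one under intersection with a tropical hypersurface, and that $\codim(V \cap V') \leq \codim V + \codim V'$ holds locally at a common point. These are standard properties of balanced polyhedral complexes but must be phrased as local statements near $q$. A minor additional subtlety is that $W$ may be reducible; this is harmless since Theorem~\ref{thm:lifting} is stated for arbitrary subvarieties, and the codimension argument works at $q$ regardless of irreducibility.
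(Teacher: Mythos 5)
Your proof is correct and follows essentially the same route as the paper: an induction that applies the Osserman--Payne lifting theorem (Theorem~\ref{thm:lifting}) one hypersurface at a time, with the proper-intersection hypothesis secured by the same codimension bookkeeping (forcing $\codim Z_i = i$ near $q$ for every $i$). The only cosmetic difference is in how the codimension of $\trop(W)\cap\trop(Y_k)$ is pinned down: you use containment in $Z_k$ for the lower bound and a tropical B\'ezout-type bound $\codim(V\cap V')\leq \codim V+\codim V'$ for the upper bound, whereas the paper instead argues that $\trop(Y_1\cap\ldots\cap Y_i)$ \emph{agrees} with $\trop(Y_1)\cap\ldots\cap\trop(Y_i)$ near $q$ and reads the codimension off directly; both routes rest on the same standard balancing facts left implicit in the paper, and your remark about handling possible reducibility of $W$ component by component is also fine.
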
\label{cor:lifting}
\begin{proof}
Since the codimension of $\trop(Y_1)\cap \ldots\cap \trop(Y_k)$ at $q$ is $k$, it follows that for any $i$,
the codimension of $\trop(Y_1)\cap \ldots\cap \trop(Y_i)$ at $q$ is $i$.

Now assume that $q\in \trop(Y_1\cap\ldots\cap Y_i)$ for some $1\leq i < k$, and the codimension of $\trop(Y_1\cap\ldots\cap Y_i)$ at $q$ is $i$. Then in a neighborhood of $q$, $\trop(Y_1\cap\ldots\cap Y_i)$ agrees with $\trop(Y_1)\cap \ldots\cap \trop(Y_i)$, so $\trop(Y_1\cap\ldots\cap Y_i)\cap\trop(Y_{i+1})$ has codimension $i+1$ at $q$. By Theorem \ref{thm:lifting}, it follows that $q\in \trop(Y_1\cap\ldots\cap Y_{i+1})$. Furthermore, the codimension at $q$ is $i+1$, since a similar argument applies to any point $q'$ in a neighborhood of $q$. Proceeding by induction, we obtain the claim of the corollary.
\end{proof}

\subsection{Lifting tropical tangents}
As in \S \ref{sec:tangent} we have fixed a point $p\in \trop(X)$ and are interested in determining the tropical tangents to $\trop(X)$ at $p$.
We continue to use the notation of \S \ref{sec:consistency}.

Let $(J_0,\ldots,J_m)$ be a maximal consistent sequence and fix $L\subset \{0,\ldots, m\}$. 
\begin{defn}
	We say that \emph{lifting criteria hold} if the following three conditions are met:
\begin{enumerate}
	\item For $j\in L$,
		there exists $K'_j\subset I_j$ containing $K_j$ with 
\[
\rank M^{(j)}_{K_j'}=\#\A_j=\#K'_j.
\]

	\item For $j\notin L$, either $\#J_j=1$ or
		\[
\rank M^{(j)}_{J_j}=\#\A_j-1.
\]
\item Each subspace of $\RR^{n+1}$ generated by
\begin{align*}
\left\{u-v\ |\ u,v\in \A_j\ \textrm{and}\ j\notin L,\#J_j>1\right\}
\cup\left\{u\ |\ u\in \A_j\ \textrm{and}\ j\in L\right\}
\end{align*}
along with, for each $j\notin L$ with $\#J_j=1$, a vector of the form $u-v$ for $u\neq v\in \A_j$,
has dimension\label{assumpt:three} 
\[
	\sum_{j\in L} \#\A_j+\sum_{\substack{j\notin L\\ \#J_j>1}} (\#\A_j-1)+\#\{j\notin L\ |\ \#J_j=1\}.
\]
\end{enumerate}
\end{defn}

\begin{prop}\label{prop:lifting}
	Suppose that lifting criteria hold and $I_{m+1}=\emptyset$.
	For $j\notin L$, set $K'_j=K_j$.
For all $j=0,\ldots,m$ and $i\in K_j$, fix arbitrary $z_i^{(j)}$ satisfying $\nu(z_i^{(j)})>\nu_j$. Likewise, for 
	$i\in K_j'\setminus K_j$, fix $z_i^{(j)}$ satisfying $\nu(z_i^{(j)})=\nu_j$ with lowest order part generic among the $z_k^{(j)}$, $k\in K_j'$.
Then for the indices $i\in I_j\setminus (J_j\cup K_j')$ there exist $z_i^{(j)}$ satisfying $\nu(z_i^{(j)})=\nu_j$ such that the system of equations $G^{(m+1)}$ has a solution.
\end{prop}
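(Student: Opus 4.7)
The strategy is to realize the equations of the transformed system $G^{(m+1)}$ as hypersurfaces in a suitable ambient torus and invoke Corollary \ref{cor:lifting}. Concretely, the ambient torus has coordinates $x_0,\dots,x_n$ together with $z_0,\dots,z_n$; the prescribed $z_i^{(j)}$ amount to fixing specific linear combinations of the $z_i$ (those corresponding to rows in $K_j$, for all $j$, and in $K_j'\setminus K_j$ when $j\in L$) to have prescribed valuations. I let $q$ denote the candidate tropical point with $\trop(x)=p$ together with $\trop(z_i^{(j)})=\nu_j$ for the free indices $i\in I_j\setminus(J_j\cup K_j')$, and seek to show $q$ lies in the tropicalization of the common vanishing locus.

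Next I identify, for each equation $g_i^{(j)}=z_i^{(j)}$ in $G^{(m+1)}$, the leading-order constraint at $q$. Three cases occur. For $i\in K_j$ the condition $\nu(z_i^{(j)})>\nu_j$ turns the tropical equation into the requirement that $\sum_{u\in\A_j}M^{(j)}_{iu}\,c_u x^u$ have valuation strictly greater than $\nu_j$. For $i\in K_j'\setminus K_j$ with $j\in L$ the generic prescribed value yields the equation $\sum_{u\in\A_j}M^{(j)}_{iu}\,c_u x^u=z_i^{(j)}$ at order $\nu_j$. For free $i$, the maximality and $M^{(j)}$-consistency of $J_j$ imply that the rowspan of $M^{(j)}_{J_j\cup\{i\}}$ contains a standard basis vector, producing a monomial of valuation exactly $\nu_j$ in $g_i^{(j)}$; this automatically forces $\nu(z_i^{(j)})=\nu_j$ on any solution, and $q$ lies on the corresponding tropical hypersurface.

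The main step is then the codimension count. After the monomial substitution $w_u:=c_u x^u$, each leading equation becomes affine-linear in the $w_u$ (for $u\in\bigcup_j\A_j$) and the free $z_i^{(j)}$, and its tropical hypersurface at $q$ is governed by the affine span of its support. The contribution of level $j$ to the codimension is $\#\A_j$ when $j\in L$ (via the full-rank block $M^{(j)}_{K_j'}$ and the $\#\A_j$ equations it indexes), and either $\#\A_j-1$ or $1$ when $j\notin L$, according to whether $\rank M^{(j)}_{J_j}=\#\A_j-1$ or $\#J_j=1$. Lifting criterion (3) asserts exactly that these contributions add up independently across levels, so that the tropical hypersurfaces meet in codimension equal to their total number at $q$. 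Hence Corollary \ref{cor:lifting} produces a genuine common solution $(x,z)$, and by the leading-order analysis the free $z_i^{(j)}$ automatically satisfy $\nu(z_i^{(j)})=\nu_j$, giving the conclusion.

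The principal difficulty is the proper-intersection check in the third paragraph: converting the purely combinatorial dimension identity in criterion (3) into the statement that the tropical intersection at $q$ has the expected codimension. The block-upper-triangular form \eqref{eqn:soematrix} suggests a level-by-level induction where one solves for the $w_u$ with $u\in\A_j$ inside the span already determined by levels $<j$; the contributions from $j\in L$ use the genericity of the chosen $z_i^{(j)}$ for $i\in K_j'\setminus K_j$ to ensure transversality, while those from $j\notin L$ are forced by the consistency and corank conditions on $J_j$. Carrying out this bookkeeping, together with verifying that the row operations producing the $g_i^{(k)}$ do not contract the relevant Newton supports, is the bulk of the technical work.
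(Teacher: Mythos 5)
Your proposal correctly identifies the overall strategy—realize the equations as hypersurfaces, tropicalize, apply Osserman–Payne via Corollary~\ref{cor:lifting}, and use criterion (3) to establish proper intersection—and this is indeed the same route the paper takes. But there is a genuine gap at exactly the point you flag at the end: you explicitly defer, as ``the bulk of the technical work,'' the conversion of criterion (3) into a proper-intersection statement. That conversion is precisely what the paper's proof carries out, and it is not mere bookkeeping. The paper first puts each block $M^{(j)}_{K_j}$ (for $j\notin L$) in reduced row echelon form, using criterion (2) to guarantee that after these row operations the lowest-order part of each equation is an explicit \emph{binomial} $c_u x^u - \mu\, c_{v(j)} x^{v(j)}$; for $j\in L$, it augments to $K_j'$, row-reduces, and uses genericity of the prescribed $z_i^{(\ell)}$ (for $i\in K_\ell'\setminus K_\ell$) together with $M^{(\ell)}$-consistency of $J_\ell$ to ensure each resulting $\hat z_i^{(\ell)}$ has valuation exactly $\nu_\ell$, so the lowest-order part is a \emph{monomial minus a unit}. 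Only then do the tropical hypersurfaces become (near $q=p$) affine hyperplanes orthogonal to specific vectors $u-v(j)$ or $u$, and criterion (3) is literally the statement that those vectors are linearly independent, which gives the expected codimension. Without the row reduction and the genericity argument identifying these vectors, your appeal to criterion (3) is not yet an argument: the lowest-order part of a general equation $g_i^{(j)}=z_i^{(j)}$ is a sum of several monomials, whose tropicalization is a nontrivial tropical hypersurface, and one cannot read the local codimension off a rank count alone.

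Two smaller remarks. First, your handling of the free indices $i\in I_j\setminus(J_j\cup K_j')$ is a valid variant: you include them as coordinates and argue directly from the rowspan of $M^{(j)}_{J_j\cup\{i\}}$ containing a standard basis vector (with the coefficient of row $i$ nonzero by consistency of $J_j$) that any solution forces $\nu(z_i^{(j)})=\nu_j$. The paper instead solves only the $K_j/K_j'$-subsystem for $x$, then sets $z_i^{(j)}:=g_i^{(j)}(x)$ and cites Proposition~\ref{prop:consistency} plus maximality of $J_j$; this keeps the ambient torus small and avoids re-deriving the consistency argument, but your version is equally correct once spelled out. Second, your phrase ``verifying that the row operations $\dots$ do not contract the relevant Newton supports'' is the right concern to have, and it is addressed in the paper precisely by criterion (2) (giving the binomial form) and the genericity clause for $j\in L$; the proof is not complete until these are used.
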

\begin{proof}
We consider the system of equations from 
\eqref{eqn:soematrix}. Note that by our assumption on the $z_i^{(j)}$, in each row the terms with minimal valuation are only appearing on the left hand side. Putting each matrix $M_{K_j}^{(j)}$ in reduced row echelon form, for $j \notin L$ and $\#J_j>1$, the rank condition guarantees that after these row operations, the lowest order part of each equation from the $M_{K_j}^{(j)}$ block has the form \[c_ux^u-\mu c_{v{(j)}}x^{v{(j)}}\] for some trivially valued constant $\mu$ and $u\in \A_{j}$, and some fixed $v{(j)}\neq u\in \A_{j}$. 
If instead $j \notin L$ and $\#J_j=1$, the $M_{K_j}^{(j)}$ block has a single equation whose lowest order term might not be a binomial.

For $\ell\in L$, we instead include all equations $g_i^{(\ell)}$ for $i\in K_\ell'$. By performing row operations on these equations, we may obtain a new system of equations whose $K_\ell'$-equations have the form
\begin{equation}
c_ux^u+\HOT=\hat{z_i}^{(\ell)}
\end{equation}
for $u\in \A_\ell$ and $i\in K_\ell'$, where 
\begin{equation}\label{eqn:hat}
\hat{z_i}^{(\ell)}=z_i^{(\ell)}+\sum_{j\in K_{\ell}'\setminus \{i\}} \beta_{ij} z_j^{(\ell)}
\end{equation}
for some trivially valued constants $\beta_{ij}$, and $\HOT$ denotes terms of valuation larger than $\nu_\ell$.
For $i\in K_\ell$, $\hat{z_i}^{(\ell)}$ must involve some $z_j^{(\ell)}$ for $j$ not in $K_\ell$, otherwise $J_\ell$ would not be consistent. Since we have fixed sufficiently generic $z_i^{(\ell)}$ for $i\in K_{\ell}'\setminus K_\ell$, it follows that $\nu(\hat{z_i}^{(\ell)})=\nu_\ell$ for all $i\in K_\ell'$.
Thus, the lowest order terms of these equations are  $c_ux^u-\mu$  for some constant $\mu$ and $u\in \A_{\ell}$. 

Tropicalizing, each equation in this new system (indexed by $i\in K_j$ or $i\in K_\ell'$) cuts out a tropical variety contained in one of the following:
\begin{enumerate}
	\item ($j\notin L$, $\#J_j>1$) an affine hyperplane orthogonal to $u-v(j)$ with $u,v(j)\in \A_j$ 
	\item ($j\notin L$, $\#J_j=1$) a union of affine hyperplanes orthogonal to $u-v$ with $u,v\in \A_j$ 
	\item ($j\notin L$, $\#J_j>1$) an affine hyperplane orthogonal to $u$ with $u\in \A_j$ 
\end{enumerate}
Any collection of the above vectors is linearly independent by assumption \ref{assumpt:three} above, so the intersection of these tropical hypersurfaces at $q=p$ satisfies the hypotheses of Corollary \ref{cor:lifting}.
	Thus, we obtain a solution $x$ to this system of equations, and hence to the subsystem of $G^{(m+1)}$ for $i\in K_j$ or $i\in K_\ell'$. For the remaining indices $i\in I_j\setminus (J_j\cup K_j')$, we define $z_i$ to be $g_i^{(j)}$. Since $J_j$ was chosen maximally, we must have $\nu(z_i^{(j)})=\nu_j$ by Proposition \ref{prop:consistency}. Since $I_{m+1}=\emptyset$, we obtain all indices in this fashion.
\end{proof}

\begin{ex}\label{ex:mot3}
	Proposition \ref{prop:lifting} shows that the necessary conditions for tropical tangents in Example \ref{ex:mot} are also sufficient after replacing $\leq$ by $<$.
	Indeed, by Example	\ref{ex:mot2}, we are considering the maximal consistent sequence $(J_0,J_1,J_2)$ with $J_0=\{-1,1,2\}$, $J_1=\{2\}$, and $J_2=\{\emptyset\}$. To apply Proposition \ref{prop:lifting}, we set $L=\{2\}$. 
	Taking $K_2'=I_2=\{2\}$, the first and second lifting criteria are clearly met. The third is as well, since the subspace of $\RR^{n+1}$ generated by 
	\begin{align*}
		&(-1,0,0,1)=(0,1,1,1)-(1,1,1,0)\\
		&(0,2,-2,0)=(1,2,0,0)-(1,0,2,0)\\
		&(2,1,0,0)
	\end{align*}
	has dimension three as desired. Since additionally $I_3=\emptyset$, we may apply the proposition.

	After fixing $z_1^{(1)}=z_1$ with valuation larger than $\nu_1$ and $z_2^{(2)}=z_1+z_2$ with valuation $\nu_2$, by the proposition we may find $z_0,z_3$ with valuation $\nu_0$ such that $G^{(3)}$ has a solution. 

	Let $\epsilon\in \KK$ be an arbitrary element with valuation $\nu_2$. If $\nu(z_1) <\nu_2$, we may choose $z_2=\epsilon-z_1$, leading to the first kind of tangent described in Example \ref{ex:mot}. If $\nu(z_1)=\nu_2$, we may choose $z_2$ arbitrarily with valuation larger than $\nu_2$, leading to one of the second kinds of tangents. The other case is covered by taking $z_2$ of valuation $\nu_2$ and $z_1$ with valuation larger than $\nu_2$.
\end{ex}

\subsection{Tangents at vertices}\label{sec:vert}
We are now in a position to determine all tropical tangents at a vertex of $\trop(X)$ in the smooth case, and prove Theorem \ref{thm:vert}.
\begin{proof}[Proof of Theorem \ref{thm:vert}]
By Lemma \ref{lemma:geco}, the condition that $\langle E \rangle \cap \langle J \rangle =0$ is exactly the condition that $J^{+}=J\cup\{-1\}$ is $M^{(0)}$-consistent.
By Proposition \ref{prop:consistency} we know that any tropical tangent to $X$ at $p$ must be contained in the union in the statement of the theorem. 
	On the other hand, by taking the consistent sequence $(J^{+})$ with $L=\{0\}$, the lifting criteria hold and $I_1=\emptyset$. We may apply Proposition \ref{prop:lifting} to obtain the tangent $-p-s$ as long as $s_i>0$ for $i\in J$. Taking the closure, we may replace $s_i>0$ by $s_i\geq 0$.
\end{proof}

\section{Next Order Terms}\label{sec:NOT}
The situation of tropical tangents at vertices of $\trop(X)$ is especially tractable, since we may deal directly with the system $G^{(0)}$ of equations. For points $p$ in higher-dimensional strata of $\trop(X)$, we will be forced to deal with systems $G^{(m)}$ for $m>0$, which means in particular that we will need to better understand the structure of the sets $\A_j$ for $j>0$.
For our purposes, a good understanding of $\A_1$ will suffice for the most part. 

Fix a cell $P$ in $\trop(X)\subset \RR^{n}$. 
Take $p\in \RR^{n+1}$ to be any point in the relative interior of the cell $\widehat{P}$ of $\trop(\widehat{X})$ corresponding to $P$.

	For $v\in \A$ and $q\in\RR^{n+1}$, let $\phi_q(v)=\nu(c_v)-\langle q,v\rangle$. 
We set $\phi(v)=\phi_p(v)$, and 
\[\A_1'=\{u\in \A\ |\ u\ \textrm{not in affine span of}\ \A(p),\ \textrm{and}\ \phi(u)\ \textrm{minimal}\}.\]

\begin{lemma}\label{lemma:a1}
	Suppose that either $\trop(X)$ is smooth, or $n\leq 3$. Then for any $u\in \A_1'$, $\{u\}\cup \A_0$ lie in a common cell in the subdivision of $\Delta_X$.
\end{lemma}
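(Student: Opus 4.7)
The plan is to reduce the statement to a convex geometry question in the quotient $\RR^{n+1}/L_0$, where $L_0 = \mathrm{span}(\A_0 - \A_0)$. Fix $v_* \in \A_0$ and let $\pi \colon \RR^{n+1} \to \RR^{n+1}/L_0$ be the projection, writing $\bar v = \pi(v - v_*)$, so $\bar v = 0$ for $v$ in the affine span $L$ of $\A_0$. The statement that $\A_0 \cup \{u\}$ lies in a common cell of the subdivision is equivalent to the existence of an affine function $\ell \colon \RR^{n+1} \to \RR$ agreeing with $\nu(c_v)$ at each $v \in \A_0 \cup \{u\}$ and bounded above by $\nu(c_w)$ at each $w \in \A$. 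Writing $\ell(v) = \nu_0 + \langle p+y, v - v_*\rangle$ for some $y \in L_0^\perp$, agreement with $\nu(c_v)$ on $\A_0$ is automatic (since $y \perp L_0$), while the remaining conditions become
\[
\langle y, \bar u\rangle = \phi(u) - \nu_0, \qquad \langle y, \bar w\rangle \leq \phi(w) - \nu_0 \quad \text{for all } w \in \A\setminus L.
\]
Existence of such $y$ amounts to showing that $(\bar u, \phi(u) - \nu_0)$ lies on the lower convex hull of
\[
S := \{(\bar 0, 0)\} \cup \{(\bar w, \phi(w) - \nu_0) : w \in \A\setminus L\} \subset (\RR^{n+1}/L_0) \times \RR.
\]

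The plan is to argue this by contradiction. Suppose $(\bar u, \phi(u) - \nu_0)$ lies strictly above the lower hull of $S$. Then there exist $w_1,\ldots,w_m \in \A\setminus L$ and $\mu_i \geq 0$ with $\sum_i \mu_i \leq 1$, $\bar u = \sum_i \mu_i \bar{w_i}$, and $\sum_i \mu_i (\phi(w_i) - \nu_0) < \phi(u) - \nu_0$. Minimality of $\phi(u)$ on $\A\setminus L$ gives $\phi(w_i) \geq \phi(u)$ for each $i$, whence $(\sum_i \mu_i)(\phi(u) - \nu_0) < \phi(u) - \nu_0$ forces $\sum_i \mu_i < 1$; thus $\bar u$ lies in the relative interior of the cone from $\bar 0$ spanned by $\{\bar{w_i}\}$. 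In the smooth case, the star of $\A_0$ projects under $\pi$ to a \emph{unimodular} fan, and the existence of such a decomposition of the lattice vector $\bar u$ strictly inside one of its cones would contradict unimodularity; chasing this through the convex envelope $\tilde\phi$, one extracts a lattice point $w' \in \A\setminus L$ along the corresponding segment with $\phi(w') < \phi(u)$, contradicting minimality.

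In the case $n \leq 3$, the quotient $\RR^{n+1}/L_0$ has dimension $n+1-k$ where $k = \dim \conv \A_0 \geq 1$, hence at most $3$. One handles the (few) resulting possibilities directly: when the quotient is $1$-dimensional, $S$ lies on a half-line from the origin and the claim is immediate; when the dimension is $2$ or $3$, a short hull computation using the primitive vectors $\bar w$ and the heights $\phi(w) - \nu_0$ suffices, and one verifies that no strict convex combination of type above can exist without a lattice point in $\A\setminus L$ violating minimality of $\phi(u)$. The principal obstacle is the smooth case: converting the abstract lower-hull obstruction into a concrete lattice-theoretic contradiction via unimodularity of the link of $\A_0$, and using convexity of $\tilde\phi$ to locate the offending lattice point, is the step requiring care.
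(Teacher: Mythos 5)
Your reduction to a lower-hull statement in $\RR^{n+1}/L_0 \times \RR$, and the observation that minimality of $\phi(u)$ forces $\sum_i \mu_i < 1$, are both correct and closely mirror the paper's use of convexity. However, the argument does not close, and the step you flag as the ``principal obstacle'' is genuinely missing, not just requiring care. The claim that ``the existence of such a decomposition of the lattice vector $\bar u$ strictly inside one of its cones would contradict unimodularity'' is false as stated: any nonzero lattice point in the interior of a unimodular cone has such a decomposition (e.g.\ $(1,1)$ in the first quadrant), so nothing is contradicted. Moreover, your $w_1,\ldots,w_m$ are arbitrary elements of $\A\setminus L$, so the $\bar w_i$ need not be linearly independent and the coefficients $\mu_i$ need not be unique, which blocks any attempt to argue that \emph{the} coefficients of $\bar u$ must be integers.

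The paper's proof avoids both issues by making a sharper choice up front: it fixes a cell $C$ of the subdivision that contains $\A_0$ as a proper face and whose interior meets the segment from $u$ to the barycenter $b$ of $\conv\A_0$. This pins down a \emph{single} unimodular simplex (in the smooth case), so that the coefficients $\alpha_i$ in $u \equiv b + \sum_i \alpha_i (v_i - b) \pmod{L_0}$, where $v_i$ are the vertices of $C$ outside $\A_0$, are unique; unimodularity of $C$ (which affinely spans the lattice in the hyperplane containing $\Delta_X$) then forces the $\alpha_i$ to be integers, and positivity gives $\alpha_i \geq 1$. The convexity inequality, evaluated against the affine linear support function of $C$, gives $\sum_i \alpha_i(\phi(v_i)-\nu_0) < \phi(u)-\nu_0$, and since $\phi(v_i)-\nu_0 \geq 0$ and $\sum\alpha_i \geq 1$ this yields $\phi(v_i)<\phi(u)$ for some $i$ --- the needed contradiction. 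The $n\leq 3$ case is handled by a concrete Hilbert-basis argument: the Hilbert basis of a rational cone of dimension at most two lies in the convex hull of the origin and the primitive ray generators, which again lets one take the $\alpha_i$ integral. Your sketch waves at ``a short hull computation'' here but does not supply this input. In short: your conceptual framing is sound, but the crucial passage from $\sum\mu_i<1$ to a lattice-theoretic contradiction requires first restricting to the decomposition coming from a single cell $C$, and you have not done that.
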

\begin{proof}
	Let $b$ be the barycenter of $\conv \A_0$. Since $u$ is not in the affine span of $\A_0$, there is some cell $C$ in the subdivision of $\Delta_X$ containing $\A_0$ as a proper face which intersects the segment from $u$ to $b$ in its interior.
	Let $L$ be the vector space generated by $v-w$ for $v,w\in \A_0$.
	By projecting onto $\RR^{n+1}/L$, we see that there are $v_1,\ldots,v_k\in \A$ not contained in the affine span of $\A_0$, and $\alpha_i\in \QQ_{>0}$ such that 
	\[
		u=\sum_{i=1}^k \alpha_i(v_i-b)+b+w
		\]
for some $w\in L$.
	In fact, if $\trop(X)$ is smooth, $C$ is a simplex and any element of $\A$ is in the integral affine span of $C\cap \A$. It follows that we can take the $\alpha_i$ to be integral. Likewise, if $n \leq 3$, then one may similarly assume that the $\alpha_i$ are integral. This follows from the fact that the Hilbert basis of a one or two-dimensional rational cone is contained in the convex hull of the origin and its primitive generators.

	The subdivision of $\Delta_X$ is induced by the values $\nu(c_v)$ at $v\in \A$; this is the same as the subdivision induced by $\phi(v)$. Notice that $\phi(v)=\nu_0$ for any $v\in \A_0$, and $\phi(v)-\nu_0\geq 0$ for any $v\in \A$.

Suppose that $u$ is not in $C$.	For $\epsilon$ sufficiently small, $\epsilon u+(1-\epsilon)b$ is in $C$. For such $\epsilon$, convexity implies that
	\[
		\epsilon\phi(u)+(1-\epsilon)\nu_0>\epsilon \sum_i \alpha_i(\phi(v_i)-\nu_0)+\nu_0.
		\]
It follows that 
	\[\phi(u)-\nu_0>\sum_i \alpha_i(\phi(v_i)-\nu_0).\]
Since by the above we have seen that $\alpha_i\geq 1$ for some $i$, we obtain
	\[
		\phi(u)>\phi(v_i)
		\]
which contradicts $u\in \A_1'$. Thus, we must actually have $u\in C$, as desired.
\end{proof}

\begin{lemma}\label{lemma:prime}
Suppose that $\#I_1\geq \dim P$.
	Then $\A_1'=\A_1$.
\end{lemma}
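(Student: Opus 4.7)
\smallskip

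My approach is to observe that the inclusion $\A_1 \subseteq \A_1'$ is almost tautological: writing the expression $u_i^{(1)} = u_i^{(0)} - \sum_{j \in K_0} \alpha_{ij}^{(0)} u_j^{(0)}$ as $L_i(u)$ for an affine function $L_i \colon \RR^{n+1} \to \RR$ (with the convention $v_{-1}^{(0)} = 1$), the defining property of the $\alpha_{ij}^{(0)}$ is precisely that $L_i$ vanishes on $\A(p)$. Since $L_i$ is affine it therefore vanishes on the affine span $A := \mathrm{aff}(\A(p))$, so $L_i(u) \neq 0$ forces $u \notin A$. Once the two minima coincide, $\A_1$ and $\A_1'$ then automatically agree as sets.

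The entire content of the lemma is therefore the reverse inclusion, which I would reduce to the claim that \emph{every} $u \in \A \setminus A$ satisfies $L_i(u) \neq 0$ for some $i \in I_1$. Equivalently, setting $V := \bigcap_{i \in I_1} \ker L_i \subset \RR^{n+1}$, the claim becomes $\A \cap V \subseteq A$. Once this is proved, the minima defining $\A_1$ and $\A_1'$ are taken over the same subset of $\A$, so they coincide and the sets agree.

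The plan for proving the claim is a dimension count combined with the fact that $\A$ lies in the degree-$d$ hyperplane $H_d = \{v : \sum v_i = d\}$. The $L_i$ for $i \in I_1$ are linearly independent affine functions on $\RR^{n+1}$ (inherited from the independence of $1, x_0, \dots, x_n$ under the row operations defining them), so $\dim V = n + 1 - \#I_1$. Using the duality $\dim A = n - \dim P$ together with the hypothesis $\#I_1 \geq \dim P$, I get $\dim(V \cap H_d) = \dim V - 1 = n - \#I_1 \leq n - \dim P = \dim A$. Since $A \subseteq V \cap H_d$, a dimension comparison forces $V \cap H_d = A$, and then $\A \cap V \subseteq \A \cap H_d \cap V = \A \cap A$, as required.

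The main subtlety I would have to justify is the step $\dim(V \cap H_d) = \dim V - 1$, i.e., that $V$ is not contained in $H_d$. This in turn amounts to verifying that $\sum v_i - d$ is not in $\mathrm{span}\{L_i\}_{i \in I_1}$. Viewed as a combination of the affine functions $\{1, x_0, \dots, x_n\}$, each $L_i$ has support contained in $\{i\} \cup K_0 \subseteq J_0$, whereas $\sum v_i - d$ has a nonzero coefficient on every index in $I_0$. Thus as long as $J_0 \subsetneq I_0$, the relation $\sum v_i - d$ involves some index $i^* \in I_0 \setminus J_0$ and cannot lie in $\mathrm{span}\{L_i\}$. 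The strict inclusion $J_0 \subsetneq I_0$ is the only nontrivial point: it holds whenever the full rowspan of $M^{(0)}$ contains a standard basis vector (forcing $I_0$ itself to be inconsistent), which is automatic when $\A(p)$ is affinely independent, and in particular in the smooth case where this lemma is applied.
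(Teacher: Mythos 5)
Your proof is correct and follows essentially the same dimension count as the paper: identify the easy inclusion $\A_1\subseteq\A_1'$ as a consequence of the $L_i$ vanishing on $\mathrm{aff}(\A_0)$, and establish the reverse inclusion by showing that the common zero locus of the $L_i$, intersected with the degree-$d$ hyperplane, has dimension $\leq \dim\mathrm{aff}(\A_0)$, forcing equality with $\mathrm{aff}(\A_0)$.

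Where you go further than the paper is in justifying the claimed dimension. The paper's proof asserts outright that the solution space has dimension $n-\#I_1$, which requires the constraint $\sum v_i=d$ to be linearly independent from the $L_i$'s; it does not verify this (and also writes $\sum v_i=0$ where $\sum v_i=d$ is meant, or else intends the parallel linear space). You correctly isolate this as the crux, reduce it to the strict containment $J_0\subsetneq I_0$, and note that this holds once the rowspan of $M^{(0)}$ contains some standard basis vector --- automatic when $\A_0$ is affinely independent, hence in the smooth case where Lemma~\ref{lemma:prime} is applied (via Proposition~\ref{prop:a1}). This is a genuine refinement: without it, the dimension comparison could fail when $\#I_1=\dim P$. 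Your version thus fills a small gap left implicit in the paper, at the cost of a slightly longer write-up. One minor slip in your wording: $\sum v_i - d$ has nonzero coefficients on the indices $0,\dots,n$, not on $-1$; since $-1\in J_0$ always, the relevant condition is that $J_0$ misses some index in $\{0,\dots,n\}$, which is exactly $J_0\subsetneq I_0$, so your conclusion stands.
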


\begin{proof}
	We begin by noting that no element $u\in \A$ in the affine space of $\A_0$ can be in $\A_1$. Indeed, such $u$
	is in the affine span of $\A_0$, which implies that $u_i^{(1)}=0$ for $i\in I_1$.

	The affine span of $\A_0$ has dimension $n-\dim P$. On the other hand, the affine space of all vectors $v\in \ZZ^{n+1}$ such that
$\sum v_i=0$ and 
	\[v_i-\sum_{j\in K_m}\alpha_{ij}^{(0)}u_j=0\]
	for $i\in I_1$ has dimension $n-\#I_1$, and it contains the affine span of $\A_0$. By comparing dimensions, it follows that these two affine spaces are equal.
	Incidentally, this shows that we must always have $\#I_1\leq \dim P$.
	
	We conclude that if $u$ is not in the affine span of $\A_0$,  $u_i^{(1)}\neq 0$ for some $i\in I_{m+1}$. The claim follows.
\end{proof}

We will use the above two lemmas to better understand how $\A_1$ and $\nu_1$ vary as $p$ moves about in the cell $P$. In the following, we will assume that $\trop(X)$ is smooth. Then facets of $P$ are in bijection with $u\in \A\setminus \A_0$ such that $\{u\}\cup \A_0$ is a simplex in the triangulation of $\Delta_X$. For such $u$, the corresponding facet $Q$ is cut out by intersecting the affine span of $P$ with the image in $\RR^{n+1}/\RR$ of  
\[
	\{q\in \RR^{n+1}\ |\ \nu(c_u)-\langle q, u\rangle = \nu(c_v)-\langle q,v\rangle \}
	\]
where $v$ is any element of $\A_0$.

\begin{lemma}\label{lemma:distance}
Consider $u$ as above with associated facet $Q$ of $P$. 
	Let $\dd_Q$ be the lattice distance to $Q$ as defined in \S \ref{sec:surfacesintro}. For any $v\in \A_0$ and $q$ in the affine span of $P$, 
	\[\dd_Q(q)=\phi_q(u)-\phi_q(v).\]
\end{lemma}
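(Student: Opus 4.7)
The plan is to show that the two sides of the claimed identity are affine-linear functions on the affine span of $P$ that both vanish on $Q$, are positive on $P\setminus Q$, and agree on a primitive inward lattice direction, from which equality follows on all of $P$.

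\smallskip

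\noindent\textbf{Step 1: $\ell(q) := \phi_q(u) - \phi_q(v)$ as a well-behaved affine functional.} Expanding, $\ell(q) = (\nu(c_u) - \nu(c_v)) - \sum_{i=1}^n q_i(u_i - v_i)$, which is affine-linear in $q$. Since $\phi_q$ takes a common value on all of $\A_0$ when $q$ lies in the affine span of $P$, this restriction is independent of the choice of $v \in \A_0$. The description of $Q$ recalled just before the lemma shows $\ell|_Q = 0$, while for $q$ in the relative interior of $P$ we have $u \notin \A(q)$, forcing $\phi_q(u) > \phi_q(v)$ and hence $\ell(q) > 0$. Thus $\ell$ and $\delta_Q$ are both nonnegative affine-linear functions on the affine span of $P$ vanishing exactly on $Q$.

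\smallskip

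\noindent\textbf{Step 2: Reduction to a primitive lattice check.} Since both functionals vanish on $T_Q$ and descend to linear maps on the rank-one quotient $T_P/T_Q$, they must agree up to a positive scalar. To determine this scalar, it suffices to exhibit $w \in L_P := \ZZ^n \cap T_P$ whose image in $L_P/L_Q$ is an inward primitive generator, and at which $\ell$ evaluates to $1$ (after basepoint translation). This will match the defining normalization of the lattice distance $\delta_Q$.

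\smallskip

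\noindent\textbf{Step 3: Unimodularity supplies the right $w$.} Smoothness of $\trop(X)$ implies that the simplex $\tau = \conv(\{u\} \cup \A_0)$ in the induced triangulation of $\Delta_X$ is unimodular. Fix $v_1 \in \A_0$ (the outcome is independent by Step 1), and apply the projection $\pi\colon \ZZ^{n+1}\to\ZZ^n$ forgetting the $0$th coordinate. Unimodularity of $\tau$ means that $b_1 := \pi(u-v_1)$ together with $b_i := \pi(v_i-v_1)$ for $v_i\in\A_0\setminus\{v_1\}$ extend to a $\ZZ$-basis $\{b_1,\ldots,b_n\}$ of $\ZZ^n$. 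Let $\{b_1^*,\ldots,b_n^*\}\subset \ZZ^n$ be the dual basis with respect to the standard inner product (this is again a $\ZZ$-basis of $\ZZ^n$ because the change-of-basis matrix is unimodular). Since $b_1^*$ is orthogonal to $b_2,\ldots,b_k$, it lies in $T_P$, so $b_1^* \in L_P$; and since $L_P = \ZZ b_1^* \oplus \bigoplus_{j>k}\ZZ b_j^*$ with $L_Q = \bigoplus_{j>k}\ZZ b_j^*$, the image of $b_1^*$ generates $L_P/L_Q$.

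\smallskip

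\noindent\textbf{Step 4: Sign and conclusion.} Writing $\ell(q) = (\nu(c_u)-\nu(c_v)) - \langle q, b_1\rangle$ on the affine span of $P$, we compute $\ell(q_0 - b_1^*) - \ell(q_0) = \langle b_1^*, b_1\rangle = 1$. Hence $-b_1^*$ is the inward primitive generator of $L_P/L_Q$, and $\ell$ takes the value $1$ on it. This matches the normalization of $\delta_Q$ exactly, so $\ell = \delta_Q$ on the affine span of $P$, as claimed.

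\smallskip

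The one subtle point is Step 3: linking the combinatorial unimodularity of $\tau$ (a statement about the triangulation of $\Delta_X$) to the correct lattice normalization on the dual cell $P$. Once the dual basis $\{b_j^*\}$ is in play, the rest is a direct pairing computation.
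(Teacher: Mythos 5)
Your proof is correct and ultimately rests on the same key fact as the paper's argument—namely that smoothness of $\trop(X)$ makes $\tau = \conv(\{u\}\cup\A_0)$ unimodular, which controls the lattice normalization—but it reaches the conclusion from the tangent side of the duality rather than the normal side. The paper's proof is more direct: it identifies the primitive inward normal $v_Q$ as the image of $v-u$ in $\ZZ^{n+1}/L$ (with $L$ generated by differences of elements of $\A_0$), observes this image is primitive by unimodularity, and then evaluates $\langle q-q',v_Q\rangle$ in a single line using $\phi_{q'}(u)=\phi_{q'}(v)$. You instead note that $\ell$ and $\dd_Q$ are both affine functionals vanishing on $Q$ and positive on $P\setminus Q$, hence proportional with a positive scalar, and you pin that scalar to $1$ by building the dual basis $\{b_j^*\}$ and exhibiting $-b_1^*$ as an explicit primitive inward generator of $L_P/L_Q$ on which $\ell$ increments by $\langle b_1^*, b_1\rangle = 1$. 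This carries more bookkeeping than the paper's version but has the advantage of making the lattice normalization of $\dd_Q$ completely concrete. One point to be aware of: your appeal to the ``defining normalization'' of $\dd_Q$ (that it increases by $1$ per primitive inward lattice step) is precisely what the paper encodes by declaring $v_Q$ primitive in $\ZZ^{n+1}/L$, the dual lattice of $L_{\widehat P}$; since the definition in \S\ref{sec:surfacesintro} is stated loosely for lower-dimensional $P$, it is worth noting that the correct reading of ``primitive inward normal'' is as a primitive element of that dual lattice, which is exactly the content your Step 3 unwinds via the dual basis.
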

\begin{proof}
The primitive inward normal vector $v_Q$ to $Q$ lives in the quotient of $\ZZ^{n+1}$ by the sublattice $L$ generated by $v-w$, $v,w\in \A_0$.  
This is proportional to the projection of $v-u$ to $\ZZ^{n+1}/L$ for any $v\in \A_0$. Furthermore, since $\trop(X)$ is smooth, the projection of $v-u$ is in fact primitive, so this is exactly $v_Q$.

	Let $q'$ be any point in $Q$. Since $q'\in Q$, $\phi_{q'}(u)=\phi_{q'}(v)$.
We obtain
	\begin{align*}
		\dd_Q(q)&=
		\langle q-q',v_Q\rangle=
		\langle -q,-v_Q\rangle
		-\langle -q',-v_Q\rangle\\
		&=(\phi_{q}(u)-\phi_{q}(v))-(\phi_{q'}(u)-\phi_{q'}(v))\\
		&=\phi_{q}(u)-\phi_{q}(v).
	\end{align*}
\end{proof}

\begin{prop}\label{prop:a1}
	Assume that $\trop(X)$ is smooth, and $\#I_1\geq \dim P$. Let $p$ be in the relative interior of $P$. Then $\A_1$ consists of those $u\in\A$ corresponding to facets $Q$ of $P$ whose lattice distance is closest to $p$. Furthermore, $\nu_1-\nu_0$ is equal to the lattice distance of $p$ to any such facet.
\end{prop}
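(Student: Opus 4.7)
The plan is to reduce to $\A_1'$ via Lemma \ref{lemma:prime}, identify its elements with facets of $P$ via Lemma \ref{lemma:a1}, and then translate the minimality of $\phi$ into minimality of lattice distance via Lemma \ref{lemma:distance}.

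First, by Lemma \ref{lemma:prime} together with the hypothesis $\#I_1 \geq \dim P$, we have $\A_1 = \A_1'$, so it suffices to work with $\A_1'$. By definition, $\A_1'$ consists of those $u \in \A$ not in the affine span of $\A_0$ for which $\phi(u)$ is minimal. For any such $u$, Lemma \ref{lemma:a1} (using smoothness of $\trop(X)$) guarantees that $\{u\}\cup\A_0$ lies in a common cell of the induced subdivision of $\Delta_X$. Since $\trop(X)$ is smooth, this subdivision is a unimodular triangulation, and $\A_0$ is already a face of it; hence $\{u\}\cup\A_0$ is itself a simplex of the triangulation whose dimension is one greater than that of $\conv(\A_0)$. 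By the duality between the triangulation and $\trop(X)$, such a simplex corresponds exactly to a facet $Q$ of $P$, and $u$ is the unique vertex of this simplex not in $\A_0$.

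Next, fix any $v \in \A_0$, so that $\phi(v) = \nu_0$. For $u\in\A_1'$ with corresponding facet $Q$, Lemma \ref{lemma:distance} gives
\[
\dd_Q(p) \;=\; \phi_p(u) - \phi_p(v) \;=\; \phi(u) - \nu_0.
\]
Thus the function $u \mapsto \phi(u)$ on $\{u \in \A : u \notin \operatorname{aff}(\A_0),\ \{u\}\cup\A_0 \text{ a simplex}\}$ differs from $Q \mapsto \dd_Q(p)$ only by the additive constant $\nu_0$. Moreover, Lemma \ref{lemma:a1} ensures that any $u \in \A$ not in $\operatorname{aff}(\A_0)$ that is \emph{not} a vertex of such an adjacent simplex must have strictly larger $\phi$-value than some $u$ that is (by the convexity argument carried out in that proof). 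Therefore the $u$ minimizing $\phi$ among all non-affinely-spanned elements of $\A$ are precisely those corresponding to facets $Q$ of $P$ minimizing $\dd_Q(p)$, and
\[
\nu_1 - \nu_0 \;=\; \min_u\bigl(\phi(u) - \nu_0\bigr) \;=\; \min_Q \dd_Q(p),
\]
proving both assertions of the proposition.

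The main (and essentially only) conceptual step is the bijection between facets of $P$ and adjacent simplices $\{u\}\cup\A_0$ of the triangulation containing $\A_0$; once that is clear, Lemma \ref{lemma:distance} makes the identification of $\phi(u)-\nu_0$ with $\dd_Q(p)$ immediate. The hypothesis $\#I_1\geq\dim P$ is used only to invoke Lemma \ref{lemma:prime}; smoothness is used both to apply Lemma \ref{lemma:a1} and to guarantee that an adjacent simplex in the triangulation is in fact a unimodular simplex dual to a facet of $P$.
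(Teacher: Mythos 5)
Your proof is correct and follows essentially the same path as the paper's: reduce $\A_1$ to $\A_1'$ via Lemma~\ref{lemma:prime}, identify its elements with facets of $P$ via Lemma~\ref{lemma:a1} and the facet-simplex bijection stated just before Lemma~\ref{lemma:distance}, and translate minimality of $\phi$ into minimality of lattice distance via Lemma~\ref{lemma:distance}. The paper's version is much terser but uses exactly the same three lemmas in the same way.
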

\begin{proof}
	By Lemmas \ref{lemma:a1} and \ref{lemma:prime}, any element $u$ of $\A_1$ is such that $\{u\}\cup \A_0$ is contained in a simplex in the triangulation of $\Delta_X$, and thus corresponds to some facet of $P$. 
	Conversely, any facet of $P$ corresponds to an element $u$ of $\A$ not contained in the affine span of $\A_0$.

	For any point $x$ tropicalizing to $p$, the valuation $\nu(c_ux^u)$ is exactly $\phi(u)$. The claims now follow by Lemma \ref{lemma:distance}.
\end{proof}

\begin{ex}
Consider 
\[
f=tx_0^3+x_0^2x_1+x_0^2x_2+x_0x_1x_2+t^3x_1^2x_2+t^2x_2^2x_1+x_1x_2x_3.
\]
Then $X=V(f)$ is a cubic surface, and 
\[
	P=\conv \{0,3e_1,2e_2,3e_1+2e_2\}
\]
is a two-face in $\trop(X)$ with $\A_0=\{(1,1,1,0),(0,1,1,1)\}$.
The edges of $P$ correspond to the vectors $(2,1,0,0)$, $(2,0,1,0)$, $(0,2,1,0)$, and $(0,1,2,0)$ as pictured in Figure \ref{fig:a1}. We have drawn the subdivision of $P$ showing which edges have closest lattice distance. For example, at the point $p=e_1+e_2$, we obtain that 
\[
	\A_1(p)=\{(2,1,0,0),(2,0,1,0),(0,1,2,0)\},
\]
and $\nu_1-\nu_0=1$. Indeed, at this $p$,
\begin{align*}
\nu(tx_0^3)&=1&\qquad\nu(x_0^2x_1)&=-1\\
\nu(x_0^2x_2)&=-1&\qquad \nu(x_0x_1x_2)&=-2\\
\nu(t^3x_1^2x_2)&=0&\qquad \nu(t^2x_2^2x_1)&=-1\\
\nu(x_1x_2x_3)&=-2
\end{align*}
so $\nu_0=-2$ and $\nu_1=-1$.
Notice that in this example, for any $p$ in the interior of $P$, $(3,0,0,0)$ will never be in $\A_1(p)$.
\end{ex}
\begin{figure}
	\begin{tikzpicture}
\draw[fill,color1] (0,0) -- (3,0) -- (3,2) -- (0,2) -- (0,0);
\draw (0,0) -- (1,1) -- (0,2);
\draw (3,0) -- (2,1) -- (3,2);
\draw (1,1) -- (2,1);
\node [below ] at (1.5,0) {\scriptsize$(2,1,0,0)$};
\node [above ] at (1.5,2) {\scriptsize$(0,1,2,0)$};
\node [left ] at (0,1) {\scriptsize$(2,0,1,0)$};
\node [right ] at (3,1) {\scriptsize$(0,2,1,0)$};
\node [below right] at (1,1) {$p$};
\draw[fill] (1,1) circle [radius=0.05];
\end{tikzpicture}
	\caption{Edges of $P$ correspond to possible elements of $\A_1$}\label{fig:a1}
\end{figure}
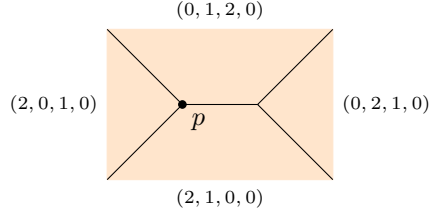

\section{Edges}\label{sec:edges}
We are now in a position to determine all tropical tangents at an edge of $\trop(X)$ in the smooth case, and prove Theorem \ref{thm:edge}.
\begin{proof}[Proof of Theorem \ref{thm:edge}]
Consider any tropical tangent 
\[
-p-\sum_i s_ie_i	
\]
to $p$. This determines a consistent sequence $(J_0,J_1,\ldots)$ by \ref{prop:consistency}. Since $E$ is an edge, $\#(J_0\setminus K_0)\leq 1$, which implies that $I_2=\emptyset$, so we only need to consider the sequence $(J_0,J_1)$. 
Since $E$ is an edge, either the rows of $M_{J_0}^{(0)}$ are linearly independent, or have a unique linear dependency:
\[
	0=\sum_{i\in Z} \alpha_iM_i^{(0)}
\]
with all $\alpha_i\neq 0$.

If the rows are linearly independent, then $K_0=J_0$ and $J_1=\emptyset$, and the only constraints on $s_i$ we obtain are exactly $s_i>0$ for $i\in J_0$.
If there is a linear dependency as above, then $K_0$ consists of every element of $J_0$ except for the final element $\ell$ of $Z$. Then $J_1$ is either empty, or consists of $\ell$. The latter is only possible if $\#\A_1>1$. In either case, we have
\[
z_\ell^{(1)}=z_\ell-\sum_{\substack{i\in Z\\i\neq \ell}} \frac{\alpha_i}{\alpha_\ell}z_i.
\]
The constraint $\nu(z_\ell^{(1)})= \nu_1$ in the case $J_1=\emptyset$  leads to the constraint that the minimum of \[\{\nu(z_i)\}_{i\in Z}\cup\{\nu_1\}\]
is obtained at least twice.	
Similarly, the constraint $\nu(z_\ell^{(1)})>\nu_1$ in the case $J_1\neq \emptyset$ leads to the constraint that the minimum of 
 \[\{\nu(z_i)\}_{i\in Z}\]
is obtained at least twice, or the minimum is at least $\nu_1$. We thus obtain the following constraints on the $s_i$.
Firstly, 
\begin{align*}
&\textrm{$s_i\geq 0$, with $s_i>0$ if and only if $i\in J_0$.}
\end{align*}
Secondly,
\begin{align}\label{eqn:ss}\begin{split}
&\textrm{$s_i=s_j\leq \nu_1-\nu_0$ for some $i\neq j\in Z$ and $s_i\leq s_k$ for all $k\in Z$; or}\\
\\
&\textrm{$s_i\geq \nu_1-\nu_0$ for all $i \in Z$, and}\\
&\textrm{$s_i=\nu_1-\nu_0$ for at least one $i\in Z$ unless $J_1\neq \emptyset$.}
\end{split}
\end{align}
If $(J_0,J_1)$ is not maximal, we may find a maximal consistent sequence $(\widetilde J_0,\widetilde J_1)$ such that $J_0\subset \widetilde J_0$, and $J_0\cup\
J_1\subset \widetilde J_0\cup \widetilde J_1$. 
For this new sequence $(\widetilde J_0,\widetilde J_1)$, the new constraints we would obtain on the $s_i$ (after relaxing strict inequalities to weak ones) are weaker than the constraints coming from $(J_0,J_1)$. We thus obtain that any tropical tangent to $p$ satisfies the constraint \eqref{eqn:ss}, and $s_i=0$ if $i\notin J_0$, for some maximal consistent sequence $(J_0,J_1)$. For $(J_0,J_1)$ maximal,  $J_1$ is already determined by $J_0$.

		By Lemma \ref{lemma:geco}, the condition that $\langle F \rangle \cap \langle J \rangle =\langle E \rangle \cap \langle J \rangle$ for all adjacent two-faces $F$ is exactly the condition that $J^{+}=J\cup\{-1\}$ is $M^{(0)}$-consistent.
Given such a maximal $J$, we set $J_0=J^{+}$. Recall that $J'$ is the minimal subset of $J$ such that $\langle E \rangle \subset \langle J'\rangle$ if such a subset exists, and $\emptyset$ otherwise. It is straightforward to check that $J'=Z\setminus\{-1\}$.

By Proposition \ref{prop:a1}, $\nu_1-\nu_0$ is the lattice distance $\dd(p)$ of $p$ to the nearest endpoint of $E$. Furthermore, $\#\A_1=2$ if and only if $p$ is the midpoint of $E$. Note that if $E$ has no endpoints, then it must be the case that $\A_1=\emptyset$, and we may view $\nu_1-\nu_0$ as being $\infty$.
The above discussion thus implies that any tropical tangent of $p$ is contained in the set described in the statement of the theorem.

It remains to show that every such point is in the closure of the set of tropical tangents. To that end, fix $J\in \cJ(E)$, and $s_j>0$ for $j\in J$ satisfying the hypotheses of the theorem. We take $J_0=J^{+}=J\cup\{-1\}$.
If $J'=\emptyset$, then it follows that $K_0=J_0$, so $I_1=\emptyset$.
In this case, the consistent sequence $(J_0)$ with $L=\{0\}$ satisfies the lifting criteria, and we may apply Proposition \ref{prop:lifting} to obtain the tangent $-p-\sum_{j\in J}s_je_j$. 
Suppose instead that $J'\neq \emptyset$, and let $J_1$ be the unique maximal consistent set for $M^{(1)}$. Note that if $\A_1=\emptyset$, we have $J_1=\emptyset$. 
Similar to above, the consistent sequence $(J_0,J_1)$ will satisfy the lifting criteria with $L=\{0,1\}$ if $\#\A_1=1$ or with $L=\{0\}$ if $\#\A_1=0,2$. Fix $z_i$ for $i\in J$ with $\nu(z_i)=\nu_0+s_i$. Given the constraints on $s_i$, we may choose these $z_i$ such that $\nu(z_\ell^{(1)})=\nu_1$ (if $p$ is not a midpoint) or $\nu(z_\ell^{(1)})>\nu_1$ (if $p$ is a midpoint). Applying Proposition \ref{prop:lifting}, we conclude that $-p-\sum_{j\in J}s_je_j$ is a tropical tangent to $p$.

We had assumed the constraint $s_i>0$, but by passing to the closure of the set of tropical tangents, we may relax this to $s_i\geq 0$. This shows that every point described in the statement of the theorem is in the closure of the set of tropical tangents.
\end{proof}

\section{Multiplicities}\label{sec:mult}
In this section, we describe a strategy for computing the multiplicities of the maximal cells in the tropical dual variety $\trop(X^*)$ for hypersurfaces of arbitrary dimension, and make these calculations explicit in the case of curves.  We refer the reader to \cite[\S 2.3]{lifting} for details on tropical multiplicities.

As usual, $X\subseteq\PP^n$ is a hypersurface and  $X^*\subseteq(\PP^n)^*$  its dual.
We consider the \emph{conormal variety}
\[
	W_X=\overline{\{(x,H)\in\PP^n\times(\PP^n)^*\ |\ x\in X\ \textrm{smooth},\ H\ \textrm{tangent to}\ X\ \textrm{at}\ x\}}.
\]
This comes with a projection $\pi:W_X\to X^*$.

We fix a cell $\sigma$ of $\trop(X^*)$, and are interested in computing its multiplicity.
Let $\sigma'$ be any cell of $\trop(W_X)$ that maps to $\sigma$ under $\trop(\pi)$. 
For such a cell $\sigma'$, consider any subvariety $Z\subset W_X$ of complimentary dimension such that $\trop(Z)$ is an affine linear space with multiplicity one which intersects $\sigma_W$ properly. We refer to such a $Z$ as a \emph{test variety}. Denote by $\eta(\sigma')$ the number of intersection points (counted with multiplicity) of $W_X$ and $Z$ which tropicalize to $\sigma'$. Likewise, denote by $\omega(\sigma')$ the tropical intersection multiplicity of $\trop(W_X)$ and $\Lambda$, where $\Lambda$ is the tropical variety whose support is the affine span of $\sigma'$, and which has constant multiplicity one.

\begin{lemma}\label{lem:weight}
Assume that $\defect(X)=0$ and that for any $\sigma_W$, the image of the lattice points in $\sigma_W$ generate the affine lattice of lattice points in $\sigma$. Then the multiplicity $\mult(\sigma)$ of $\sigma$ is the sum of 
\[
\eta(\sigma')/\omega(\sigma')
\]
 as  $\sigma'$ varies over  the cells of $\trop(W)$ mapping to $\sigma$. 
\end{lemma}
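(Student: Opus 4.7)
The plan is to compute $\mult(\sigma)$ by transferring the computation from $\trop(X^*)$ up to $\trop(W_X)$, where we have better intersection-theoretic control, and then pushing forward along $\trop(\pi)$. Concretely, since $\defect(X)=0$, the dual $X^*$ is a hypersurface of the expected dimension, and the projection $\pi:W_X\to X^*$ is birational (by biduality, both projections of $W_X$ are birational onto their images whenever those images are hypersurfaces). This means that for a generic point $p$ in the relative interior of $\sigma$, the fiber of $\pi$ over the unique component of $X^*$ meeting a small neighborhood of $p$ consists of a single reduced point, so counting points of $X^*$ (with multiplicities) near $p$ can be replaced by counting points of $W_X$ whose images tropicalize near $p$.

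The first step is to recall the operational definition of tropical multiplicity from \cite[\S 2.3]{lifting}: $\mult(\sigma)$ equals the number (with multiplicity) of intersection points of $X^*$ with a sufficiently generic test variety $Z_*$ of complementary dimension whose tropicalization is a trivially weighted affine linear space $\Lambda_*$ passing through $p$ and meeting $\trop(X^*)$ properly at $p$. The second step is to lift $Z_*$ under $\pi$: one chooses $Z$ in the ambient space of $W_X$ so that $\pi(Z)=Z_*$ (generically), $\trop(Z)$ is an affine linear space of multiplicity one, and $Z$ meets each $\sigma'$ mapping to $\sigma$ properly. By the birationality of $\pi$, the intersection points of $X^*\cap Z_*$ are in bijection (with multiplicities preserved) with the intersection points of $W_X\cap Z$, so
\[
\mult(\sigma)=\sum_{\sigma'\mapsto\sigma}\eta(\sigma').
\]

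The third step is to interpret each $\eta(\sigma')$ tropically via the Osserman--Payne lifting theorem (Theorem \ref{thm:lifting}): since $\trop(Z)$ meets $\trop(W_X)$ properly along the affine span of $\sigma'$, the sum of the weights of intersection points tropicalizing to the relative interior of $\sigma'$ equals $\mult(\sigma')\cdot\omega(\sigma')$, where $\omega(\sigma')$ is the tropical stable-intersection multiplicity. Therefore $\mult(\sigma')=\eta(\sigma')/\omega(\sigma')$, and substituting into the previous display yields the claimed formula, provided the passage from $\mult(\sigma')$ summed over $\sigma'$ to $\mult(\sigma)$ carries no lattice index. This is where the final hypothesis enters: the standard projection formula for tropical cycles says
\[
\mult(\sigma)=\sum_{\sigma'\mapsto\sigma}[\Lambda_\sigma:\trop(\pi)_*\Lambda_{\sigma'}]\cdot\mult(\sigma'),
\]
where $\Lambda_\sigma$ and $\Lambda_{\sigma'}$ are the affine lattices generated by the lattice points of $\sigma$ and $\sigma'$ respectively, and the hypothesis that the image of lattice points of $\sigma'$ generates $\Lambda_\sigma$ forces each index to equal $1$.

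The main obstacle is the lifting step: one must ensure that $Z$ can be chosen generically enough so that all intersection points of $W_X\cap Z$ occur in cells of $\trop(W_X)$ mapping into $\sigma$ (no extraneous contributions from cells mapping elsewhere), that the intersection is transverse at each such point, and that the multiplicity computation on $X^*$ is unaffected by singular behavior of $\pi$ over $\sigma$. Genericity of $Z$ together with the hypothesis $\defect(X)=0$ (which makes $\pi$ birational and thus an isomorphism over an open subset of $X^*$ meeting $\sigma$ generically) lets us arrange this; the Osserman--Payne framework then guarantees that proper intersection implies the numerical equality $\eta(\sigma')=\mult(\sigma')\omega(\sigma')$, closing the argument.
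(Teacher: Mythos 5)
Your final two ingredients are the correct ones, and they match the paper's proof: biduality ($W_X=W_{X^*}$) together with $\defect(X)=0$ gives that $\pi:W_X\to X^*$ is a generically finite map of degree one, so a pushforward/projection formula for tropical cycles plus the lattice hypothesis gives $\mult(\sigma)=\sum_{\sigma'\mapsto\sigma}\mult(\sigma')$ (the paper cites \cite[Theorem~C.1]{lifting} for exactly this), and the Osserman--Payne framework gives $\eta(\sigma')=\mult(\sigma')\cdot\omega(\sigma')$ (the paper cites \cite[Theorem~5.1.1]{lifting}, via \cite[Corollary~5.2.4]{lifting}). Combining these two statements immediately gives the claim, and that is the entire content of the paper's proof.

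However, the intermediate display in your second step,
\[
\mult(\sigma)=\sum_{\sigma'\mapsto\sigma}\eta(\sigma'),
\]
is not correct as stated, and it is internally inconsistent with the rest of your argument: together with $\mult(\sigma')=\eta(\sigma')/\omega(\sigma')$ and the target identity $\mult(\sigma)=\sum\eta(\sigma')/\omega(\sigma')$, it would force $\omega(\sigma')=1$ for every $\sigma'$, which the lemma does not assume (the whole point of dividing by $\omega(\sigma')$ is to allow for non-transversal test varieties). The "birationality preserves intersection counts" reasoning does not produce that display: $\eta(\sigma')$ counts intersection points of $W_X$ with a test variety $Z$ living in the ambient $\PP^n\times(\PP^n)^*$, and this count depends on $Z$ through $\omega(\sigma')$; there is no canonical matching with points of $X^*\cap Z_*$ unless $Z$ is taken to be the preimage of $Z_*$ under the second projection, and even then the relation picks up precisely the tropical intersection multiplicities you later call $\omega(\sigma')$. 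Similarly, the sentence "substituting into the previous display yields the claimed formula" does not parse: substituting $\eta(\sigma')=\mult(\sigma')\omega(\sigma')$ into your first display gives $\mult(\sigma)=\sum\mult(\sigma')\omega(\sigma')$, not $\sum\eta(\sigma')/\omega(\sigma')$. You should discard the lift-$Z_*$-to-$Z$ detour entirely; once you have the projection formula $\mult(\sigma)=\sum\mult(\sigma')$ and the Osserman--Payne equality $\mult(\sigma')=\eta(\sigma')/\omega(\sigma')$, the lemma follows and nothing else is needed.
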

\begin{proof}
	By projective duality, $W_X$ coincides with the conormal variety $W_{X^*}$ for $X^*$, see \cite[\S1.2.A]{tevelev}. Since $\defect(X)=0$, $X^*$ is also a hypersurface, so the map $\pi:W_X \to X^*$ is an isomorphism over the smooth locus of $X^*$. In particular, $\pi$ is a generically finite map of degree one.

	We now apply \cite[Theorem C.1]{lifting}, which states that (under the assumptions we have made on the lattice points of $\sigma_W$) the multiplicity $\mult(\sigma)$ of $\sigma$ is equal to the sum of the multiplicities $\mult(\sigma')$ of the $\sigma'$.
By \cite[Theorem 5.1.1]{lifting}, $\eta(\sigma')$ is the sum of the tropical intersection multiplicities of $\trop(W_X)$ and $\trop(Z)$ at the unique point in $\sigma_W\cap\trop(Z)$. But this tropical intersection multiplicity is just the tropical intersection multiplicity of $\omega(\sigma')$ of $\Lambda$ with $\trop(Z)$ times the multiplicity $\mult(\sigma')$ of $\sigma'$.
\end{proof}

Fix now some $\sigma'\subset \trop(W_X)$ projecting to $\sigma$. We wish to compute $\eta(\sigma')/\omega(\sigma')$.
Coordinates for the torus of $\PP^n\times (\PP^n)^*$ are given by 
\[x_0/x_i,\ldots,x_n/x_i,y_0/y_j,\ldots,y_n/y_j\]
for some choice of $i$ and $j$, omitting $x_i/x_i$ and $y_j/y_j$. Equations for \[W_X\cap (\KK^*)^{(n-1)+(n-1)}\] can be obtained from any system $G^{(m)}$ by de-homogenizing with respect to $x_j$. If we instead wish to work with coordinates $x_0,\ldots,x_n,z_0,\ldots,z_n$, we may alternatively obtain equations for 
$W_X\cap (\KK^*)^{(n-1)+(n-1)}$ from $G^{(m)}$ by setting $x_i=1$.

Assume now that the $n+3$ equations we obtained for $W_X$ have binomial lowest order terms on $\sigma'$, as do $n-1$ equations cutting out $Z$. Considering the differences of the exponent vectors for each binomial $h$, we obtain $2n+2$ elements $w_h$ of $\ZZ^{n+1}\times \ZZ^{n+1}$. 
Let $A$ be the $(2n+2)\times (2n+2)$ matrix whose columns are indexed by the $2n+2$ above binomials, and the column $h$ consists of the vector $w_h$.

\begin{prop}\label{prop:weights}
If the matrix $A$ has full rank, then the quantity $\eta(\sigma')$ equals $|\det(A)|$.
\end{prop}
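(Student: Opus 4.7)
The plan is to reduce Proposition \ref{prop:weights} to the classical fact that the number of solutions in an algebraic torus of a system of binomial equations is controlled by the determinant of the exponent matrix.

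First, I would invoke the Osserman--Payne framework together with the computation of $\eta(\sigma')$ via initial forms. Concretely, pick a rational weight vector $w$ in the relative interior of $\sigma'$. By hypothesis, all $2n+2$ generators of the ideal $I$ cutting out $W_X\cap Z$ in the ambient torus of $\PP^n\times(\PP^n)^*$ have binomial initial forms with respect to $w$. Combining Lemma~\ref{lem:weight}'s use of \cite[Theorem 5.1.1]{lifting} with Corollary~\ref{cor:lifting}, one identifies $\eta(\sigma')$ with the length of the zero-dimensional scheme cut out by $\mathrm{in}_w(I)$ inside the torus, that is, the number of $\KK^*$-valued points counted with multiplicity.

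Second, I would analyze the binomial system. Each generator, viewed as an element of the Laurent polynomial ring, can (after clearing a unit monomial) be written as
\[
	x^{w_h}=\alpha_h
\]
for some $\alpha_h\in\KK^*$, where $w_h\in\ZZ^{n+1}\times\ZZ^{n+1}$ is the difference of the two exponent vectors of the binomial. Assembling the $w_h$ as columns produces the matrix $A$, and the system defines the fiber of the torus homomorphism
\[
	\phi\colon(\KK^*)^{2n+2}\longrightarrow (\KK^*)^{2n+2},\qquad \phi(x)_h=x^{w_h},
\]
whose induced map on character lattices is exactly $A^{\top}$.

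Third, since $\det A\neq 0$, the map $\phi$ is a surjective isogeny whose kernel is a finite subgroup of the torus of order $|\det A|$. Because $\KK$ has characteristic zero, $\phi$ is étale, so every fiber is reduced and consists of exactly $|\det A|$ points. Hence $V(\mathrm{in}_w(I))$ is reduced of length $|\det A|$, giving $\eta(\sigma')=|\det A|$.

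The main obstacle is the first step: one must pin down which ambient torus is being used (the $2n+2$ exponents suggest working in the homogeneous torus $(\KK^*)^{n+1}\times(\KK^*)^{n+1}$ rather than in a dehomogenized chart) and verify that the $2n+2$ binomial equations really cut out the local intersection scheme of $W_X$ and $Z$ near points tropicalizing to $\sigma'$, so that the length of the initial scheme equals $\eta(\sigma')$ rather than some multiple of it. Once this bookkeeping is set up carefully --- using the properness of the intersection of $\trop(W_X)$ with $\trop(Z)$ along $\sigma'$ and the binomial hypothesis on initial forms --- steps two and three reduce to a standard computation in toric algebra.
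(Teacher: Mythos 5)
Your proof reaches the right conclusion and the core idea is sound, but it takes a genuinely different route from the paper. The paper's argument goes: invoke \cite[Corollary 5.2.4]{lifting} to express $\eta(\sigma')$ as the mixed volume $(2n+2)!\cdot V(Q_1,\ldots,Q_{2n+2})$ of the Newton segments $Q_h=\conv\{0,w_h\}$, and then use the classical fact (cf.\ Fulton) that the mixed volume of line segments equals the absolute value of the determinant of the matrix of their direction vectors. Your argument instead bypasses mixed volumes entirely: you recognize the initial binomial system as a fiber of a torus isogeny $\phi$ whose induced map on characters is $A^{\top}$, and use that in characteristic zero the isogeny is \'etale with kernel of order $|\det A|$, so every fiber is reduced of exactly that cardinality. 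This is a cleaner, more structural argument and sidesteps the mixed-volume computation, at the price of needing to know that the fiber is actually reduced (your \'etale observation) rather than relying on the multiplicity bookkeeping built into the mixed-volume statement.

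The one place your write-up should be tightened is your first step. You cite \cite[Theorem 5.1.1]{lifting} together with Corollary~\ref{cor:lifting} as the justification that $\eta(\sigma')$ equals the length of $V(\mathrm{in}_w(I))$. But as used in the proof of Lemma~\ref{lem:weight}, \cite[Theorem 5.1.1]{lifting} produces the \emph{tropical intersection multiplicity} at $\sigma_W\cap\trop(Z)$, and Corollary~\ref{cor:lifting} is a lifting statement that gives existence of points, not a length formula. Neither reference directly asserts the equality you want between $\eta(\sigma')$ and the length of the initial degeneration. That equality is indeed a standard part of the Osserman--Payne framework (the initial degeneration over the residue field is the special fiber of a flat family whose generic fiber counts the algebraic intersection points), but it needs a precise pointer, and is in fact exactly what \cite[Corollary 5.2.4]{lifting} packages, just phrased in the language of mixed volumes rather than lengths. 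So you should either cite the appropriate initial-degeneration result in \cite{lifting} directly, or be explicit that your step 1 is using the same result as the paper's proof, after which your isogeny computation replaces its mixed-volume computation. With that adjustment the argument is correct.
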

\begin{proof}
	By \cite[Corollary 5.2.4]{lifting}, the intersection multiplicity $\eta(\sigma')$ equals $(2n+2)!\cdot V(Q_1,Q_2,\ldots,Q_{2n+2})$, where $V$ stands for the mixed volume and $Q_i$ is the convex hull of $0$ and the vector $w_i$ from above. But since the $Q_i$ are one-dimensional, \cite[Section 5.4]{fulton}, implies that  
	\begin{align*}
	(2n+2)!\cdot &V(Q_1,Q_2,\ldots,Q_{2n+2}) =  \vol(Q_1+Q_2+\ldots+Q_{2n+2}) = |\det (A)|.
	\end{align*}
\end{proof}

We illustrate how Proposition \ref{prop:weights} and Lemma \ref{lem:weight} apply in the case of a plane curve $X$ of degree at least two.
We will always make use of the modified system of equations used in the proof of Proposition \ref{prop:lifting}.

\begin{ex}[Non-standard edges]\label{ex:nonstd}
	Let $E$ be an edge of $\trop(X)$ which is not parallel to $e_0,e_1,e_2$. By Theorem \ref{thm:edge}, $E$ contributes exactly $\sigma=-E$ to $\trop(X^*)$. In fact, the unique cell of $\trop(W_X)$ mapping onto $-E$ in $\trop(X^*)$ is $\sigma'=\{(p,-p)\ |\ p\in E\}$. This implies in particular that the lattice hypothesis in Lemma \ref{lem:weight} is satisfied.

On this cell of $W_X$, the lowest order terms for equations of $W_X$ are given by
\begin{align*}
c_ux^u+c_vx^v&=\HOT\\
\alpha_0x^u-z_0&=\HOT\\
\alpha_1x^u-z_1&=\HOT\\
\alpha_2x^u-z_2&=\HOT\\
x_0-1&=0
\end{align*}
where $u,v$ are the elements of $\A_0$ and $\alpha_i$ are non-zero constants.
We choose the test variety $Z=V(x^az^b-x^bz^a)$ for any $a,b\in \ZZ^{n+1}$ satisfying 
\[
	\det \left(\begin{array}{c c c}
		u-v& e_0 & a-b\end{array}\right)=1.
\]
This necessarily exists, since $u-v$ is primitive.

We then obtain the matrix 
\[
A=\left(\begin{array}{c c c c c c}
		u-v& u & u & u & e_0 & a-b\\
		0 & -e_0 & -e_1 & -e_2 & 0 &  b-a\end{array}\right).
\]
Performing invertible column operations, we obtain
\[
\left(\begin{array}{c c c c c c}
		u-v& u & u & u & e_0 & a-b\\
		0 & -e_0 & -e_1 & -e_2 & 0 & 0 \end{array}\right).
\]
which is easily seen to have determinant one. Thus, $\eta(\sigma')=1$. It follows that $\omega(\sigma')=1$, so $\mult(\sigma)=1$.
\end{ex}

\begin{ex}[Vertices]\label{ex:vert}
	Let $p$ be a vertex of $\trop(X)$ with no edge adjacent to $p$ parallel to $e_0$. By Theorem \ref{thm:vert}, $p$ contributes the cell $\sigma=-p-\RR_{\geq 0}\cdot e_0$ to $\trop(X^*)$, and the cell $\sigma'=(p,-p)-\RR_{\geq 0}\cdot (0,e_0)\subset \trop(W_X)$ maps to it.

In this case, we choose 
$Z=V(x_0z_2 - x_2z_0)$, leading to the matrix 
\[
A=\left(\begin{array}{c c c c c c}
		u-w& v-w & w & w & e_0 & e_0-e_2\\
		0 & 0 & -e_1 & -e_2 & 0 & e_2-e_0 \end{array}\right).
\]
Here $u,v,w$ are the elements of $\A(p)$. Since $p$ is a smooth vertex of $\trop(X)$, 
\[
	|\det \left(\begin{array}{c c c}
		u-v& v-w & e_0\end{array}\right)|=1.
\]
so $|\det(A)|=1$ as well. Similar to Example \ref{ex:nonstd}, we obtain that the contribution of $\sigma'$ to $\mult(\sigma)$ is $1$.
\end{ex}

\begin{ex}[Standard edges]\label{ex:std}
	Let $E$ be an edge of $\trop(X)$ which is parallel to $e_0$, and  bounded in direction $-e_0$ with endpoint $q$. Let $q_\midp$ be the midpoint of $E$ if the edge is bounded, otherwise we treat it as a point at infinity.
	By Theorem \ref{thm:edge}, $E$ contributes exactly $\sigma=-E-\RR_{\geq 0}\cdot e_0$ to $\trop(X^*)$. If $E$ is bounded, there are two corresponding cells of $\trop(W_X)$:
	\begin{align*}
		\sigma'=	\{ (p,-p-(p-q)\ |\ p\ \textrm{between}\ q,q_\midp\}\\
		\sigma''=(q_\midp,-q_\midp-(q_\midp-q))-\RR_{\geq 0}\cdot (0,e_0)
	\end{align*}
If $E$ is unbounded, only the first cell $\sigma'$ exists. In both cases, the lattice hypothesis in Lemma \ref{lem:weight} are satisfied.

We will again choose 
$Z=V(x_0z_2 - x_2z_0)$.
Consider first $\sigma'$. Let $u,v$ be the elements of $\A_0$, and $w$ the unique element of $\A_1$, corresponding to the endpoint $q$ of $E$.
Our system of equations for $W_X$ is
\begin{align*}
c_ux^u+c_vx^v&=\HOT\\
\alpha_0x^w-z_0&=\HOT\\
\alpha_1x^u-z_1&=\HOT\\
\alpha_2x^u-z_2&=\HOT\\
x_0-1&=0
\end{align*}
leading to the matrix 
\[
A=\left(\begin{array}{c c c c c c}
		u-v& w & u & u & e_0 & e_0-e_2\\
		0 & -e_0 & -e_1 & -e_2 & 0 & e_2-e_0 \end{array}\right).
\]
Applying invertible column operations, we obtain
\[
\left(\begin{array}{c c c c c c}
		u-v& w & u & u & e_0 & (u-w)+(e_0-e_2)\\
		0 & -e_0 & -e_1 & -e_2 & 0 & 0 \end{array}\right).
\]
But then 
\[
|\det A|= |\det
\left(\begin{array}{c c c }
	u-v & e_0 & e_0-e_2
		\end{array}\right)
+
\det\left(\begin{array}{c c c }
	u-v & e_0 & u-w
		\end{array}\right)
|=1+1=2
\]
so $\eta(\sigma')=2$. 
On the other hand, it is straightforward to verify that
the tropical variety $\Lambda$ intersects $\sigma'$ tropically transversally, implying that $\omega(\sigma')=1$. We conclude that $\mult(\sigma')=2$.

Considering instead $\sigma''$, we obtain the matrix
\[
A=\left(\begin{array}{c c c c c c}
		u-v& w-w' & u & u & e_0 & e_0-e_2\\
		0 &  0 & -e_1 & -e_2 & 0 & e_2-e_0 \end{array}\right)
\]
where $w,w'$ are the two elements of $\A_1(q_\mid)$.
Since $w$ and $w'$ correspond to the endpoints of $E$ and $\trop(X)$ is tropically smooth, we obtain 
\[
|\det A|=|\det\left(\begin{array}{c c c }
	u-v & w-w' & e_0
		\end{array}\right)|=2.
\]
Again, $\omega(\sigma')=1$ and we conclude that $\mult(\sigma')=2$.

\end{ex}
\section{Curves}\label{sec:curves}
We are now able to prove our Theorem \ref{thm:dualcurve}, describing the tropicalization of the dual of a plane curve.

\begin{proof}[Proof of Theorem \ref{thm:dualcurve}]
	The description of the underlying set $\trop(X^*)$ follows from Theorems \ref{thm:vert} and \ref{thm:edge}. Indeed, these theorems imply that the set described in the statement of the theorem is contained in $\trop(X^*)$. Furthermore, any plane $H$ with dual coordinates in $(\KK^*)^2$ which is tangent at a point $x\in X\cap (\KK^*)^2$ must be contained in this set. The only tangents we must still deal with are those which are tangent at points in the boundary of $X$. But these will form a zero-dimensional set in $\trop(X^*)$, so since the set described in the theorem is closed in the Euclidean topology and $\trop(X^*)$ is a tropical curve, the union we describe must equal $\trop(X^*)$.

It remains to calculate the multiplicities. But after applying Lemma \ref{lem:weight}, this follows from Examples \ref{ex:nonstd}, \ref{ex:vert}, and \ref{ex:std}.
\end{proof}

We next consider the situation of the Newton polygon of $X^*$. To begin with, we assume that  $\trop(X)$ is a smooth tropical curve:

\begin{prop}\label{prop:newton}
Let $\Delta_X\subset\RR^2$ be the Newton polygon of $X$, and assume that $\trop(X)$ is a smooth tropical curve. Label the vectors of $\Delta_X$ by $v_1,v_2,\ldots,v_m$ in counterclockwise orientation, omitting all edges parallel to $w_0=(-1,1),w_1=(0,-1),w_2=(1,0)$. For $i=0,1,2$, let $\sigma_i$ be the sum of edge vectors parallel to $w_i$. Then the edge vectors of the Newton polygon $\Delta_{X^*}$ of $X^*$ are exactly 
\[
-v_m,-v_{m-1},\ldots,-v_1
\]
along with 
\[
\vol(\Delta_X)\cdot w_i-\sigma_i
\]
where $\vol(\Delta_X)$ is the normalized lattice volume of $\Delta_X$.
\end{prop}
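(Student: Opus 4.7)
The plan is to read off the Newton polygon $\Delta_{X^*}$ from the description of $\trop(X^*)$ given by Theorem~\ref{thm:dualcurve}, invoking the classical duality: the CCW edge vectors of a Newton polygon are in bijection with the directions of unbounded rays of the associated tropical curve, with an unbounded ray of primitive direction $d$ and total multiplicity $m$ producing the CCW edge vector $m R_{90}(d)$, where $R_{90}$ is counterclockwise $90^\circ$ rotation. A direct check gives $R_{90}(-e_i) = w_i$, and each CCW boundary edge $v$ of $\Delta_X$ of lattice length $\ell$ corresponds to a family of unbounded rays of $\trop(X)$ of primitive direction $d$ and total multiplicity $\ell$, with $v = \ell \cdot R_{90}(d)$.

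For each non-standard CCW edge $v_i$ of $\Delta_X$, the associated family of rays of $C = \trop(X)$ has some primitive direction $d_i$, and the corresponding rays $-E$ of $\trop(X^*)$ arising from case~(1) of Theorem~\ref{thm:dualcurve} assemble into a single edge $\ell_i R_{90}(-d_i) = -v_i$ of $\Delta_{X^*}$ (where $\ell_i$ is the lattice length of $v_i$). This directly yields the list $-v_1, \dots, -v_m$.

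For the standard directions, it remains to show that the total multiplicity $m_i$ of rays of $\trop(X^*)$ in direction $-e_i$ satisfies $m_i w_i = \vol(\Delta_X) w_i - \sigma_i$. Writing $\sigma_i = (\ell_i^+ - \ell_i^-) w_i$, where $\ell_i^\pm$ is the lattice length of the boundary edge of $\Delta_X$ in direction $\pm w_i$, this reduces to
\[
	m_i = \vol(\Delta_X) + \ell_i^- - \ell_i^+.
\]
To compute $m_i$ I would use the duality between $C$ and the Newton subdivision: vertices of $C$ correspond to triangles, and edges of $C$ parallel to $e_i$ correspond to subdivision edges parallel to $w_i$. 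Let $T = \vol(\Delta_X)$ (equal to the number of triangles, since smoothness implies the subdivision is unimodular), let $T_i$ be the number of triangles having an edge parallel to $w_i$, and let $I_i$ be the number of internal subdivision edges parallel to $w_i$. Case~(2) contributes $T - T_i$ to $m_i$. Case~(3) contributes $2(I_i + \ell_i^-)$: the $C$-edges parallel to $e_i$ and bounded on the $-e_i$ side are precisely the $I_i$ bounded internal ones, together with the $\ell_i^-$ semi-infinite rays emanating from the boundary edge of $\Delta_X$ in direction $-w_i$ (these point in direction $+e_i$). The standard double count $T_i = 2I_i + \ell_i^+ + \ell_i^-$ then yields the desired formula.

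The main difficulty is the orientation bookkeeping: aligning the sign conventions in the ray/edge-vector duality and, crucially, distinguishing the two sides $\pm w_i$ of $\Delta_X$ so that only the $-w_i$ boundary segments contribute to case~(3). Once these conventions are settled, the double count is immediate.
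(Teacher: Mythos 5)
Your proof is correct and follows essentially the same route as the paper: read off the unbounded rays of $\trop(X^*)$ from Theorem~\ref{thm:dualcurve}, convert to Newton-polygon edge vectors via the standard $90^\circ$-rotation duality, and count the $-e_i$-rays by dualizing to the unimodular Newton subdivision. Your decomposition $m_i=(T-T_i)+2(I_i+\ell_i^-)$ with the double count $T_i=2I_i+\ell_i^++\ell_i^-$ is exactly the paper's $2\alpha_i+\beta_i$ with $2\alpha_i=\#\cS'-\kappa_i$ and $\beta_i=\#\cS''$, just spelled out slightly more explicitly.
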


\begin{proof}
The edges of $\Delta_{X^*}$ are determined by the unbounded edges of $\trop(X^*)$. 
Let $\cS$ be the regular unimodular triangulation of $\Delta_X$ induced by the valuations of the coefficients of $f$. The tropical curve $\trop(X)$ is dual to this triangulation.

We use the explicit description of $\trop(X^*)$ from Theorem \ref{thm:dualcurve} to see: the number of rays in direction $e_i$ (counted with multiplicity) is given by $2\cdot \alpha_i+\beta_i$ where $\alpha_i$ is the number of edges of $\trop(X)$ parallel to $e_i$ excluding rays in direction $-e_i$, and $\beta_i$ is the number of vertices of $\trop(X)$ not having an edge parallel to $e_i$.
We can reinterpret $2\cdot \alpha_i$ as the number of vertices of $\trop(X)$ having an edge parallel to $e_i$, plus the number of rays of $\trop(X)$ in direction $e_i$, less the number of rays of $\trop(X)$ in direction $-e_i$.

Let $\cS'$ consist of those triangles in $\cS$ with an edge orthogonal to $e_i$, and $\cS''=\cS\setminus \cS'$. Then $\#\cS''=\beta_i$, and $\#\cS'$ is the number of vertices of $\trop(X)$ having an edge parallel to $e_i$.
Let $\kappa_i=\sigma_i/w_i$; this is the number of rays of $\trop(X)$ in direction $-e_i$, less the number of rays of $\trop(X)$ in direction $e_i$.
Then
\[2\alpha_i=\#\cS'-\kappa_i.\]
Since $\vol(\Delta_X)=\#\cS=\#\cS'+\#\cS''$, the formula for the length of the edge of $\Delta_{X^*}$ in direction $w_i$ follows.

For $u\notin\{e_0,e_1,e_2\}$, 
the number of rays in direction $u$ (counted with multiplicity) is given by the number of rays of $\trop(X)$ in direction $-u$. The description of the other edge vectors of $\Delta_{X^*}$ now follows.
\end{proof}

\begin {ex}[Example \ref{ex:dualCurveExample} continued]
	We illustrate the proof of Proposition \ref{prop:newton} using the plane curves $X_1$ and $X_2$ from Example \ref{ex:dualCurveExample}. The Newton polygons of $X_1^*$ and $X_2^*$ agree and were pictured in Figure \ref{fig:NewtonPolygon}.
 
	In Figure \ref{fig:SubdividedNewtonPolygons}, we show the subdivided Newton polygons of $X_1$ and $X_2$. The shaded regions form the set $\cS'$ with respect to $e_1$, and the unshaded regions form the set $\cS''$. We see that for $X_1$, $\beta_1=2$, and $2\alpha_1+\kappa_1=2$, and $\kappa_1=0$. Hence, we obtain that the length of the edge in direction $w_1$ in $\Delta_{X_1^*}$ is $4$. On the other hand, for $X_2$ we have $\beta_1=4$, and $2\alpha_1+\kappa_1=0$, and $\kappa_1=0$. This also leads to an edge of length $4$; in both cases, $\beta_1+2\alpha_1+\kappa_1$ is $4$, the area of $\Delta_{X_i}$.
\end{ex}

  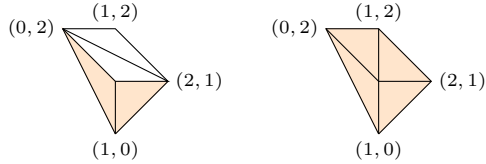
\begin{figure}[h]
\centering
\begin{tikzpicture}[scale=.7]
	\draw [fill,color1] (1,0) -- (2,1) -- (1,1) -- (0,2) -- (1,0);
\draw (1,0)--(2,1)--(1,2)--(0,2)--(1,0);
\draw (1,0)--(1,1)--(0,2);
\draw (1,1)--(2,1)--(0,2);
{\scriptsize
\node [below] at (1,0) {$(1,0)$};
\node [right] at (2,1) {$(2,1)$};
\node [above] at (1,2) {$(1,2)$};
\node [left] at (0,2) {$(0,2)$};
}

\begin{scope}[shift={(5,0)}]
\draw [fill,color1] (1,0)--(2,1)--(1,2)--(0,2)--(1,0);
\draw (1,0)--(2,1)--(1,2)--(0,2)--(1,0);
\draw (1,0) -- (1,2);
\draw (0,2) -- (1,1) -- (2,1);

{\scriptsize
\node [below] at (1,0) {$(1,0)$};
\node [right] at (2,1) {$(2,1)$};
\node [above] at (1,2) {$(1,2)$};
\node [left] at (0,2) {$(0,2)$};
}
\end{scope}
\end{tikzpicture}
\caption{The subdivided Newton polygons of $X_1$, and  $X_2$.}
\label{fig:SubdividedNewtonPolygons}
\end{figure}

To generalize Proposition \ref{prop:newton} to curves $X$ with $\trop(X)$ not smooth, we need the following lemma:

\begin{lemma}\label{lemma:constant}
	Let $X$ be an irreducible hypersurface in $\PP^n$, and let $\HH$ be the space of all hypersurfaces in $\PP^n$ with Newton polytope $\Delta_X$. Then there is a non-empty Zariski open subset $U\subset \HH$ such that for $Y\in U$, $\Delta_{Y^*}$ does not depend on $Y$.
\end{lemma}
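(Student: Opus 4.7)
The plan is to stratify $\HH$ by the value of the Newton polytope $\Delta_{Y^*}$ and show that one stratum contains a non-empty Zariski open subset.

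First, I would view $\HH$ as an irreducible quasi-projective variety: by construction it is a Zariski open subset of the projective space $\PP(V)$, where $V$ is the vector space spanned by monomials $x^u$ with $u \in \A := \Delta_X \cap \ZZ^{n+1}$. Over $\HH\times\PP^n\times(\PP^n)^*$ we have the natural equations cutting out the universal conormal variety (the incidence condition $f_Y(x)=0$ together with $\lambda y_i = x_i f_{i,Y}(x)$), whose coefficients are linear in the coordinates $c_u$ of $\HH$.

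Second, by classical elimination theory applied to these equations, there is a non-empty Zariski open subset $U_0 \subset \HH$ on which $Y^*$ is a hypersurface of constant degree $d^*$, cut out by a polynomial $g_Y \in \KK[y_0,\ldots,y_n]$ whose coefficients (up to a common scalar) depend polynomially on the coefficients $c_u$ of $f_Y$. This yields a morphism
\[
\varphi : U_0 \to \PP(V_{d^*}), \qquad Y \mapsto [g_Y],
\]
where $V_{d^*}$ is the space of homogeneous degree-$d^*$ forms in $y_0,\ldots,y_n$.

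Third, for each exponent $w$ with $|w| = d^*$, the locus $Z_w := \{Y \in U_0 : [y^w]\,g_Y = 0\}$ is the $\varphi$-preimage of a coordinate hyperplane in $\PP(V_{d^*})$, hence Zariski closed in $U_0$. For each such $w$ there are two alternatives: either $Z_w = U_0$ (so $y^w$ never appears in any $g_Y$), or $Z_w$ is a proper closed subset. Because there are only finitely many monomials of degree $d^*$, the complement
\[
U := U_0 \setminus \bigcup_{w :\, Z_w \neq U_0} Z_w
\]
is a non-empty Zariski open subset of $\HH$. For every $Y \in U$, the support of $g_Y$ equals $\{w : Z_w \neq U_0\}$, independently of $Y$, so $\Delta_{Y^*} = \conv\{w : Z_w \neq U_0\}$ is constant on $U$.

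The main obstacle is the second step: producing a defining equation for $Y^*$ whose coefficients are polynomial (up to a common scalar) in the coefficients of $f_Y$. This is handled by classical elimination theory applied to the equations of the universal conormal variety, or alternatively by the Chow form construction associated to the universal family over $\HH$. Once the morphism $\varphi$ is in hand, the remainder of the argument is a routine constructibility observation.
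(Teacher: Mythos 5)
Your argument is essentially the paper's. Both proofs eliminate $x$ from the conormal/tangency equations to obtain a universal dual polynomial that varies polynomially over $\HH$, and then observe that the vanishing locus of each coefficient is Zariski closed, so that the support---and hence the Newton polytope---is constant on the complement of finitely many proper closed subsets. The only divergence is in presentation: the paper works at the generic point, taking $g=\sum\alpha_u x^u$ with indeterminate coefficients over $\KK(\alpha_u)$, forming the irreducible generator $h$ of the elimination ideal, and invoking Noether's irreducibility theorem to conclude that the specialization $h(a_u)$ remains irreducible (and so genuinely cuts out $Y^*$) on a dense open subset; you phrase the same content in family language, as a morphism $\varphi:U_0\to\PP(V_{d^*})$. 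One place to sharpen your write-up: your second step (``classical elimination theory gives a defining polynomial $g_Y$ varying polynomially'') is indeed the crux, but naive elimination only produces a polynomial \emph{vanishing} on $Y^*$, which a priori could be a proper power of the defining equation or pick up extraneous factors for special $Y$; the appeal to Noether's theorem on irreducibility of specializations (or the equivalent flatness/Chow-form argument you allude to) is precisely what rules this out, and it is worth stating explicitly rather than folding into the phrase ``elimination theory.''
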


\begin{proof}
Let 
\[
	g=\sum_{u\in \Delta_X\cap \ZZ^{n+1}} \alpha_ux^u\in \KK(\alpha_u\ |\ u\in\Delta_X\cap \ZZ^{n+1})
\]
where the $\alpha_u$ are indeterminates. The coefficients $\alpha_u$ may be specialized to $a_u\in \KK$ obtain the equation of any hypersurface $Y\in \HH$. 

Consider the ideal 
\[
	I=\left\langle \frac{\partial g}{\partial x_0}-y_0,\frac{\partial g}{\partial x_1}-y_1,\ldots,
	\frac{\partial g}{\partial x_n}-y_n\right\rangle\subset \KK(\alpha_u)[x_0,\ldots,x_n,y_0,\ldots,y_n].
\]
Then $I\cap  \KK(\alpha_u)[y_0,\ldots,y_n]$ is a principal ideal, say with irreducible generator $h$, and the projective dual of $V(g)$ (working over $\overline{\KK(\alpha_u)}$) is exactly $V(h)$. 

For any $Y\in \HH$, the ideal of $Y^*$ will contain $h$, after we specialize the $\alpha_u$ to the coefficients $a_u$ defining $Y$. In particular, the specialization $h(a_u)$ will cut out $Y^*$ whenever this specialization is irreducible and non-zero. But the locus of $a_u\in \KK$ for which $h(a_u)$ is irreducible and non-zero is Zariski open \cite{noether}. Hence, there is a Zariski open subset $U'\subset \HH$ for which, given $Y\in U$, the equation for $Y^*$ is $h(a_u)$. Within $U'$, there is a Zariski open subset $U$ for which the Newton polytope of the specializations $h(a_u)$ are the same as the Newton polytope of $h$. The claim of the lemma now follows.
\end{proof}

Corollary \ref{cor:newton} now follows:
\begin{proof}[Proof of Corollary \ref{cor:newton}]
	Consider the set $U\subset \HH$ from Lemma \ref{lemma:constant}. The set of all $Y\in \HH$ with $\trop(Y)$ smooth is dense in $\HH$, hence it must intersect $U$. Thus, for $X$ sufficiently generic with respect to $\Delta_X$, $X$ is in $U$, and $\Delta_{X^*}$ will be the same as $\Delta_{Y^*}$ for some $Y\in \HH$ with $\trop(Y)$ smooth. But we may apply Proposition \ref{prop:newton} to obtain the description of $\Delta_{Y^*}$.
\end{proof}

From the Newton polygon $\Delta_{X^*}$, we may read off the degree of $X^*$. We first illustrate this with several examples.

\begin{ex}[Generic plane curves]
	If $X$ is a generic plane curve of degree $d$, its Newton polygon is $d\cdot \Delta$ where $\Delta$ is the standard simplex. Furthermore, it is sufficiently generic with respect to its Newton polygon. By Corollary \ref{cor:newton}, the Newton polygon $\Delta_{X^*}$ has edges in directions $w_0=(-1,1)$, $w_1=(0,-1)$, and $w_2=(1,0)$, each of length
\[
\vol(\Delta_X)-\sigma_i/w_i=d^2-d.
\]
Hence, $\Delta_{X^*}=(d^2-d)\cdot \Delta$, and we recover the classical fact that a generic degree $d$ plane curve has dual of degree $d(d-1)$.
\end{ex}

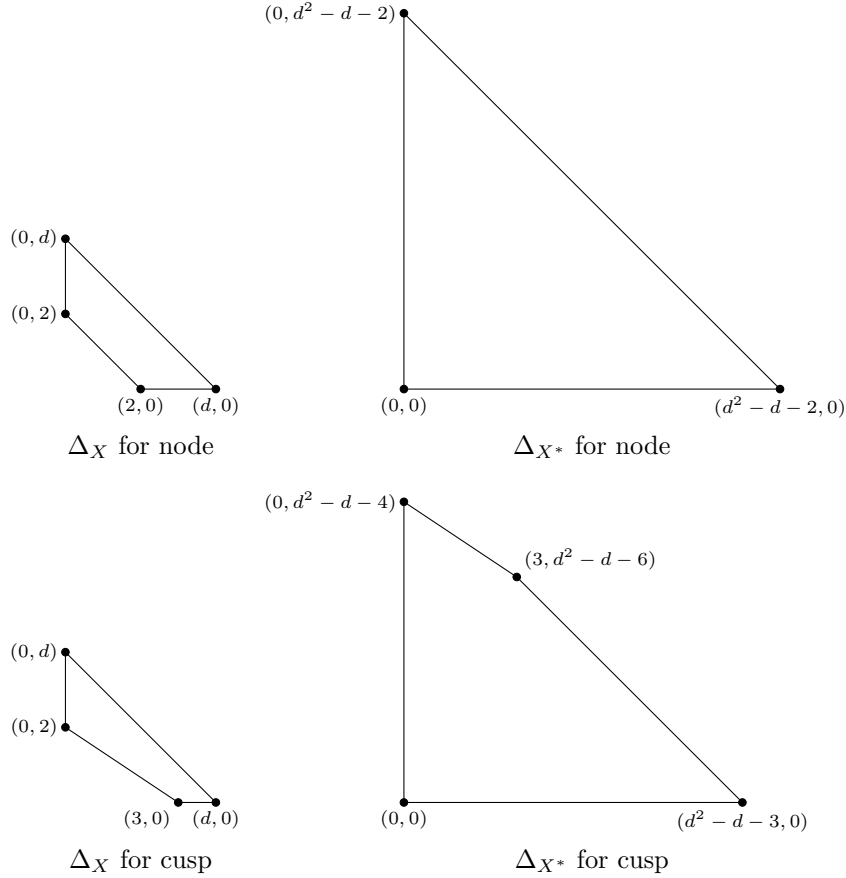
\begin{figure}
	\begin{tikzpicture}[scale=.5]
		\begin{scope}
		\draw (2,0) -- (0,2) -- (0,4) -- (4,0) -- (2,0);
	\draw[fill] (2,0) circle [radius=0.1];
	\draw[fill] (0,2) circle [radius=0.1];
	\draw[fill] (4,0) circle [radius=0.1];
	\draw[fill] (0,4) circle [radius=0.1];
	{\scriptsize
	\node [left] at (0,2) {$(0,2)$};
	\node [left] at (0,4) {$(0,d)$};
	\node [below] at (4,0) {$(d,0)$};
	\node [below] at (2,0) {$(2,0)$};}
	\node [below] at (2,-1) {$\Delta_{X}$ for node};
	\end{scope}
	\begin{scope}[shift={(9,0)}]
		\draw (10,0) -- (0,0) -- (0,10) -- (10,0);
	\draw[fill] (10,0) circle [radius=0.1];
	\draw[fill] (0,10) circle [radius=0.1];
	\draw[fill] (0,0) circle [radius=0.1];
	{\scriptsize
	\node [left] at (0,10) {$(0,d^2-d-2)$};
	\node [below] at (10,0) {$(d^2-d-2,0)$};
	\node [below] at (0,0) {$(0,0)$};}
	\node [below] at (5,-1) {$\Delta_{X^*}$ for node};
	\end{scope}
	\begin{scope}[shift={(0,-11)}]
		\draw (3,0) -- (0,2) -- (0,4) -- (4,0) -- (3,0);
	\draw[fill] (3,0) circle [radius=0.1];
	\draw[fill] (0,2) circle [radius=0.1];
	\draw[fill] (4,0) circle [radius=0.1];
	\draw[fill] (0,4) circle [radius=0.1];
	{\scriptsize
	\node [left] at (0,2) {$(0,2)$};
	\node [left] at (0,4) {$(0,d)$};
	\node [below] at (4,0) {$(d,0)$};
	\node [below left] at (3,0) {$(3,0)$};}
	\node [below] at (2,-1) {$\Delta_{X}$ for cusp};
	\end{scope}
	\begin{scope}[shift={(9,-11)}]
		\draw (0,0) -- (9,0) -- (3,6) --  (0,8) -- (0,0);
	\draw[fill] (9,0) circle [radius=0.1];
	\draw[fill] (3,6) circle [radius=0.1];
	\draw[fill] (0,8) circle [radius=0.1];
	\draw[fill] (0,0) circle [radius=0.1];
	{\scriptsize
	\node [left] at (0,8) {$(0,d^2-d-4)$};
	\node [above right] at (3,6) {$(3,d^2-d-6)$};
	\node [below] at (9,0) {$(d^2-d-3,0)$};
	\node [below] at (0,0) {$(0,0)$};}
	\node [below] at (5,-1) {$\Delta_{X^*}$ for cusp};
	\end{scope}
	\end{tikzpicture}
	\caption{Newton polygons for cusps and nodes}\label{fig:node}
\end{figure}
\begin{ex}[Nodes, cusps, and Pl\"uckers formula]\label{ex:pluecker}
Let $X$ be a generic plane curve of degree $d$ with an isolated nodal or cuspidal singularity at the origin. After changing coordinates, we can assume that its Newton polygon is
\[
\conv \{(2,0),(d,0),(0,d),(0,2)\}
\] 
in the nodal case or 
\[
\conv \{(3,0),(d,0),(0,d),(0,2)\}
\] 
in the cuspidal case. 
By Corollary \ref{cor:newton}, we obtain that  $\Delta_{X^*}$ has edges
\[
(d^2-d-2)(1,0),(d^2-d-2)(-1,1),(d^2-d-2)(0,-1)
\]
in the nodal case and
\[
(d^2-d-3)(1,0),(d^2-d-6)(-1,1),(-3,2),(d^2-d-4)(0,-1)
\]
in the cuspidal case. See Figure \ref{fig:node}.
Thus, in the nodal case, $\Delta_{X^*}=(d^2-d-2)\Delta$, and $X^*$ has degree $d(d-1)-2$. 
In the cuspidal case, the smallest dilate of $\Delta$ in which $\Delta_{X^*}$ fits is $(d^2-d-3)\Delta$, so 
$X^*$ has degree $d(d-1)-1$.

Combining this with the fact that the decrease to degree of $X^*$ from each singularity of $X$ is local (see e.g. \S 9 of \cite{brieskorn}, especially Lemma 3), we recover Pl\"ucker's classical formula that every node of $X$ lowers the degree of the dual curve by $2$, and every cusp by $3$.
\end{ex}

Generalizing the previous example, we can describe how a large class of singularities contribute to the decrease in degree of the dual curve:

\begin{lemma}
	\label{lemma:dd}
Let $X$ be an irreducible projective plane curve of degree $d$ which is generic with respect to its Newton polygon, and with  $Q=(0:0:1)$ the only singular point.
Let $A$ be the area of $d\Delta\setminus \Delta_X$. Let $\eta$ be the multiplicity of $Q$ in $X$, and $\tau$ the intersection multiplicity of the union of the coordinate axes with $X$ at $Q$. 
Then the degree of $X^*$ is $d^2-d-\delta$, where 
\[
\delta=A+\eta-\tau.
\]
\end{lemma}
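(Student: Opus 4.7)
The plan is to combine two classical inputs: Teissier's formula for the dual degree of a plane curve in terms of local singularity data, and Kouchnirenko's theorem expressing the Milnor number of a Newton non-degenerate plane curve singularity in terms of its Newton polygon. Since $Q$ is the unique singular point of $X$, Teissier's formula reduces to
\[
\deg X^* \;=\; d(d-1) - (\mu_Q + \eta - 1),
\]
where $\mu_Q$ denotes the local Milnor number at $Q$. It therefore suffices to prove $\mu_Q = A - \tau + 1$, for then $\delta = \mu_Q + \eta - 1 = A + \eta - \tau$ as claimed.

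To compute $\mu_Q$, I would dehomogenize at $Q$ by setting $u = x_0/x_2$ and $v = x_1/x_2$, and form the local polynomial $g(u,v) = f(u,v,1)$. The linear change of coordinates
\[
(a_1, a_2) \;\longmapsto\; (\alpha, \beta) := (d - a_1 - a_2,\; a_1)
\]
carries $\Delta_X$ to a polygon $\widetilde\Delta_X$ in the $(\alpha, \beta)$-plane, sending the corner $(0, d)$ of $d\Delta$ dual to $Q$ to the origin; this $\widetilde\Delta_X$ is the Newton polygon of $g$ at the origin. Genericity of $X$ ensures that $g$ is Newton non-degenerate, and the "only singularity at $Q$" hypothesis combined with genericity ensures that $g$ is convenient (its Newton polygon meets both coordinate axes). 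Kouchnirenko's theorem then yields
\[
\mu_Q \;=\; 2 V_2 - V_1 + 1,
\]
where $V_2$ is the Euclidean area of the deficit region $\Gamma_- = \RR_{\geq 0}^2 \setminus (\widetilde\Delta_X + \RR_{\geq 0}^2)$, and $V_1$ is the sum of the two axis intercepts of $\widetilde\Delta_X$.

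The identification $V_1 = \tau$ is straightforward: the $v$-adic order of $g(0,v)$ and the $u$-adic order of $g(u,0)$ are exactly $\mathrm{mult}_Q(V(x_0) \cap X)$ and $\mathrm{mult}_Q(V(x_1) \cap X)$, whose sum is $\tau$ since $V(x_2)$ does not pass through $Q$. The identification $2 V_2 = A$ is more delicate. The hypothesis that $Q$ is the only singular point of $X$, combined with the generic position of $X$, forces $\Delta_X$ to fill out $d\Delta$ everywhere except near the corner dual to $Q$; equivalently, $\widetilde\Delta_X$ contains the two vertices of $d\Delta^{(\alpha, \beta)}$ other than the origin. The only gap between $\widetilde\Delta_X$ and $d\Delta^{(\alpha, \beta)}$ therefore sits at the origin and coincides set-theoretically with $\Gamma_-$, whence $A$ (the normalized area of $d\Delta \setminus \Delta_X$) equals $2 V_2$ (twice the Euclidean area of $\Gamma_-$).

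Assembling the pieces, $\mu_Q = A - \tau + 1$, so $\delta = \mu_Q + \eta - 1 = A + \eta - \tau$, yielding $\deg X^* = d^2 - d - \delta$. The main obstacle is the geometric step $2 V_2 = A$: translating the algebraic hypotheses of genericity and single-point singularity into the combinatorial statement that $\Delta_X$ has no "extra" cutoffs from $d\Delta$ away from the $Q$-corner. The remainder of the argument is a direct assembly of Teissier's and Kouchnirenko's formulas.
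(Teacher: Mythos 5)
Your proposal is mathematically correct but takes a fundamentally different route from the paper, and in fact reverses the logical direction the paper is deliberately setting up. The paper proves Lemma \ref{lemma:dd} \emph{combinatorially}, by invoking its own Corollary \ref{cor:newton} (the tropically-derived description of $\Delta_{X^*}$), reading off $\deg X^*$ from the resulting Newton polygon $\Delta_{X^*}$, and then doing a short area/vertex bookkeeping: writing the chain of vertices of $\Delta_X$ near the missing corner as $(a_1,b_1),\ldots,(a_m,b_m)$, locating the index $k$ where the edge slopes pass through $-1$, and identifying $a_k+b_k=\eta$ and $a_m+b_1=\tau$. No local singularity theory enters the argument. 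You instead take Teissier's dual-degree formula and Kouchnirenko's Milnor-number theorem as black boxes, reduce to showing $\mu_Q=A-\tau+1$, and verify that via the Newton-number formula $\nu = 2V_2 - V_1 + 1$. That is a valid derivation: your identifications $V_1=\tau$ (axis intercepts equal local intersection multiplicities with the coordinate lines through $Q$) and $2V_2=A$ (the deficit region is exactly $d\Delta\setminus\Delta_X$ because the ``only singularity at $Q$'' hypothesis forces $\Delta_X$ to contain the other two corners of $d\Delta$) are right, and the latter uses the same implicit assumption on the shape of $\Delta_X$ that the paper's proof uses when it takes the vertex chain to run from $(0,b_1)$ to $(a_m,0)$ with $(d,0),(0,d)\in\Delta_X$.

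The important caveat is structural rather than mathematical. Immediately after this lemma, in Remark \ref{rem:df}, the paper \emph{uses} Lemma \ref{lemma:dd} together with Kouchnirenko's theorem to \emph{recover} Teissier's formula for sufficiently generic, Newton non-degenerate singularities. Your proof inputs Teissier's formula as a given, so if it were substituted for the paper's proof, Remark \ref{rem:df} would become circular and the lemma would lose its role as a tropically-obtained corroboration of a classical result. What the paper's approach buys is independence from the classical singularity-theory pipeline: $\deg X^*$ drops out directly from $\Delta_{X^*}$ via the tropical duality of Theorem \ref{thm:dualcurve} and Corollary \ref{cor:newton}. What your approach buys is brevity and transparency for a reader already fluent in Milnor numbers and Newton numbers, but it forfeits exactly the payoff the authors are after. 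You have, in effect, written out the backward direction of the equivalence the paper sketches in Remark \ref{rem:df}.
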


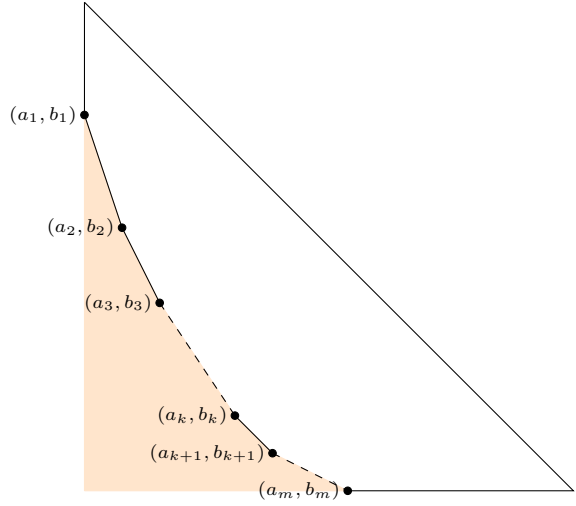
\begin{figure}
	\begin{tikzpicture}[scale=.5]
%\draw [fill,color5] (0,0) -- (13,0) -- (0,13) -- (0,0);
\draw [fill,color1] (0,0) -- (0,10) -- (1,7) -- (2,5) -- (4,2) -- (5,1) -- (7,0) -- (0,0);
\draw (0,13) -- (0,10) -- (1,7) -- (2,5);
\draw [dashed] (2,5) -- (4,2);
\draw (4,2) -- (5,1);
\draw [dashed] (5,1) -- (7,0);
\draw (7,0) -- (13,0) -- (0,13);
\draw[fill] (0,10) circle [radius=0.1];
\draw[fill] (1,7) circle [radius=0.1];
\draw[fill] (2,5) circle [radius=0.1];
\draw[fill] (4,2) circle [radius=0.1];
\draw[fill] (5,1) circle [radius=0.1];
\draw[fill] (7,0) circle [radius=0.1];
{\scriptsize
\node [left] at (0,10) {$(a_1,b_1)$};
\node [left] at (1,7) {$(a_2,b_2)$};
\node [left] at (2,5) {$(a_3,b_3)$};
\node [left] at (4,2) {$(a_k,b_k)$};
\node [left] at (5,1) {$(a_{k+1},b_{k+1})$};
\node [left] at (7,0) {$(a_m,b_m)$};
}\end{tikzpicture}

	\caption{The Newton polygon of an isolated singularity}\label{fig:deg}
\end{figure}

\begin{proof}
Consider the Newton polygon $\Delta_X$ of $X$; it has vertices $(d,0)$, $(0,d)$, and 
\[(a_1,b_1),\ldots, (a_m,b_m), \qquad 0=a_1<a_2<\ldots <a_m;\ b_1>b_2>\ldots>b_m=0.
\]
See Figure \ref{fig:deg} for an illustration.
Let $k$ be the index such that $(b_{i+1}-b_i)/(a_{i+1}-a_i)$ is smaller than $-1$ if and only if $i<k$.  Note that the area $A$ mentioned above is simply $d^2$ less the area of $\Delta_X$; it is the area of the shaded region in the figure. Using our description of $\Delta_{X^*}$ from Corollary \ref{cor:newton}, it is apparent that $(0,0)$ is a vertex of $\Delta_{X^*}$. Furthermore, the degree of $X^*$ is given by the length of the edge in direction $(1,0)$ of $\Delta_{X^*}$, plus $b_1-b_k+a_1-a_k$.
But this is the same as 
\begin{align*}
d^2-A-(d-a_m)+(b_1-b_k+a_1-a_k)\\
=d^2-d-(A-a_m-b_1+a_k+b_k).
\end{align*}
We conclude the proof by noting that $a_k+b_k$ is the multiplicity of $Q$ in $X$, and $a_m+b_1$ is the intersection multiplicity at $Q$ of the union of the coordinate axes with $X$.
\end{proof}

\begin{rem}
	Recall that the \emph{Newton polyhedron} of $X$ at $Q=(0:0:1)$ is the Minkowski sum of $\Delta_X$ with the positive orthant of $\RR^2$. Hence, the quantity $A$ in Lemma \ref{lemma:dd} may be interpreted as the area of the complement in $\RR_{\geq 0}^2$ of the Newton polyhedron of $X$ at $Q$.
\end{rem}

\begin{rem}\label{rem:df}
	Let $X$ be any integral plane curve of degree $d$.
	A generalization of Pl\"ucker's formula for the degree of the dual curve $X^*$ is the following formula due to Teissier \cite{lnm}:
\[
	\deg X^*=d(d-1)-\sum_{Q\in \sing X} (\mu(X,Q)+\eta(X,Q)-1).
\]
Here, the sum is taken over all singular points $Q$ of $X$, $\mu(X,Q)$ denotes the Milnor number of $(X,Q)$, and $\nu(X,Q)$ is the multiplicity of $X$ at $Q$. See also \cite{teissier:07a} for details.

This can be related to our Lemma \ref{lemma:dd} as follows. 	
Let $V(g)\subset \KK^2$ be an integral affine plane curve with singularity at the origin. Recall \cite{kouchnirenko} that the \emph{Newton number} of $V(g)$ at $0$ is 
\[A-\tau+1\]
where $A$ is the area of the complement of the Newton polyhedron of $g$ in the positive orthant, and $\tau$ is the intersection number of $V(g)$ with the sum of the coordinate axes. 
Kouchnirenko has shown that as long as $g$ is non-degenerate with respect to its Newton boundary, the Newton number is equal to $\mu(V(g),0)$, the Milnor number of $V(g)$ at $0$ \cite[Theorem II]{kouchnirenko}.

We will say that a plane curve singularity $(X,Q)$ is \emph{sufficiently generic} if there is some plane curve $X'$ with single singularity $Q'$ such that the germs $(X,Q)$ and $(X',Q')$ are isomorphic, and $X'$ is sufficiently generic with respect to its Newton polygon for Corollary \ref{cor:newton} to apply.
Suppose that our curve $X$ is such that each singularity $(X,Q)$ is sufficiently generic, and is non-degenerate with respect to the Newton boundary. Lemma \ref{lemma:dd} implies that each singularity $Q$ of $X$ decreases the degree of $X^*$ by $\delta=A+\eta-\tau$. The quantity $A-\tau+1$ is the Newton number of $(X,Q)$, so by Kouchnirenko's result, it follows that each singularity $Q$ decreases the degree of $X^*$ by $\mu(X,Q)-1+\eta(X,Q)$. We thus recover Teissier's formula, assuming that the singularities of $X$ are sufficiently generic and non-degenerate with respect to the Newton boundary.
\end{rem}

\section{Surfaces}\label{sec:surfaces}
\subsection{Setup and simplest case}\label{face:1}

We now work towards a description of $\trop(X^*)$ when $X$ is a surface in $\PP^3$ with a smooth tropicalization. By Theorems \ref{thm:vert} and \ref{thm:edge}, we  only have to deal with points contained in the relative interior of a two-face $F$. For such a point $p$, the corresponding matrix $M^{(0)}$ will only have two columns. This means that there is a unique $M^{(0)}$-consistent set $J_0$: it will consist of $\{-1\}$ and all indices $i$ such that $M_i^{(0)}$ is proportional to $(1,1)$. By Lemma \ref{lemma:geco}, this set $J_0$ will be of the form $\{-1\}\cup J$, where $J$ is the maximal subset of $\{0,\ldots,n\}$ such that $\langle J\rangle$ is contained in $\langle F \rangle$. There are three cases, which we deal with separately: $\#J=0$, $\#J=1$, and $\#J=2$.

The case $\#J=0$ is exactly the situation of Proposition \ref{prop:face}, which we now prove:
\begin{proof}[Proof of Proposition \ref{prop:face}]
	Since $J_0=\{-1\}$, 
	Proposition \ref{prop:consistency} implies that the only possible tropical tangent to $p$ is $-p$. This is in fact a tropical tangent:	
	the maximal consistent sequence $(J_0)$ satisfies the lifting criteria with $L=\{0\}$, and $I_1=\emptyset$, so Proposition \ref{prop:lifting} applies. This guarantees that $-p$ is a tropical tangent to $p$ as desired.
\end{proof}

\subsection{Face in one standard direction}\label{face:2}
The next case to consider is when $\#J=1$. This is exactly the situation of Proposition \ref{prop:face2}. Here, we have fixed a two-face $F$ with $e_i$ contained in $\langle F \rangle$. 
For any $p$ in the relative interior of our two-face $F$, let $\phi(p)=\nu_1(p)-\nu_0(p)$. Then $\phi$ is a piecewise linear concave function.

Fix $p$ in the relative interior of $F$.
\begin{lemma}\label{lemma:nonempty}
The set $\A_1$ is not empty.
\end{lemma}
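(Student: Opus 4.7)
My plan is to unpack the definition of $\A_1$ at $p$, reduce non-emptiness to a containment statement about the Newton polytope $\Delta_X$, and then invoke the standing hypotheses on $X$.

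The first step is to pin down $J_0$, $K_0$, $I_1$, and the relevant scalar. Because $\trop(X)$ is smooth and $F$ is a two-face, its dual cell in the triangulation of $\Delta_X$ is an edge, and so $\A_0=\{u,v\}$ with $M^{(0)}$ having exactly two columns. A short check (using $\sum_j(u_j-v_j)=0$ to compare lifts in $\RR^{n+1}$ with $\langle F\rangle\subset \RR^n$) shows that $M_k^{(0)}\propto (1,1)=M_{-1}^{(0)}$ if and only if $u_k=v_k$, if and only if $e_k\in\langle F\rangle$. The hypothesis on $F$ then forces the unique maximal $M^{(0)}$-consistent set to be $J_0=\{-1,i\}$: the rowspan of $M^{(0)}_{J_0}$ is the line through $(1,1)$ and thus contains no standard basis vector, while no other row lies on that line. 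Consequently $K_0=\{-1\}$, $I_1=\{i\}$, and the single coefficient $\alpha_{i,-1}^{(0)}$ equals the common value $\alpha:=u_i=v_i$, so $w_i^{(1)}=w_i-\alpha$ for every $w\in\A$.

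The second step is a direct reading of the definition of $\A_1$ from \S\ref{sec:consistency}: the set $\A_1$ is empty if and only if no $w\in\A$ satisfies $w_i\neq\alpha$, equivalently the Newton polytope $\Delta_X$ lies in the affine hyperplane $\{w_i=\alpha\}\subset \RR^{n+1}$.

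The final step is to exclude this containment using the standing hypotheses on $X$. If $\alpha=0$, then $f$ does not involve the variable $x_i$ at all, contradicting our assumption that $X$ is not a cone. If $\alpha\geq 1$, then $x_i^{\alpha}$ divides $f$, so irreducibility of $f$ forces $f$ to be a scalar multiple of $x_i$ and thus of degree one, contradicting $d>1$. In either case we reach a contradiction, and $\A_1\neq\emptyset$. The only step requiring real care is the identification of the scalar $\alpha_{i,-1}^{(0)}$ with the common $i$-th coordinate of the elements of $\A_0$; once that bookkeeping is settled, the geometric contradiction is immediate.
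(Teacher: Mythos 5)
Your proof is correct and follows essentially the same route as the paper: you reduce $\A_1 = \emptyset$ to the containment of $\A$ in the hyperplane $\{w_i = \alpha\}$, and then derive a contradiction with $X$ not being a cone (if $\alpha = 0$) or with irreducibility and $d > 1$ (if $\alpha \geq 1$). The extra bookkeeping identifying $J_0$, $K_0$, $I_1$, and $\alpha_{i,-1}^{(0)}$ is accurate but already established in the setup of the section, so the paper simply observes the same dichotomy directly from $e_i \in \langle F \rangle$ forcing all elements of $\A_0$ to share their $i$th coordinate.
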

\begin{proof}
	Since $e_i\in \langle F \rangle$, every element $u\in \A_0$ has the same $i$th coordinate, say $\alpha$. If $\A_1$ is empty, then every $u\notin \A_0$ must also have $\alpha$ as its $i$th coordinate. Then $\A$ is contained the hyperplane where all $i$th coordinates are equal to $\alpha$ which implies that either $X$ is not irreducible, or $X$ is a cone. 
\end{proof}

\begin{lemma}\label{lemma:prep}
Suppose that $\trop(X)$ has generic valuations.
	The closure of the set of tropical tangents to $p$ are
\[
-p-s_ie_i
\]
for any $s_i\geq \phi(p)$, with $s_i=\phi(p)$ unless $\#\A_1>1$.
\end{lemma}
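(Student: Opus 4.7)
The plan is to follow the template of the proof of Theorem~\ref{thm:edge}: establish necessity via Propositions~\ref{prop:consistency} and~\ref{prop:a1}, then establish sufficiency via Proposition~\ref{prop:lifting}.

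For necessity, since $i$ is the unique index with $e_i\in\langle F\rangle$, Lemma~\ref{lemma:geco} singles out $J_0=\{-1,i\}$ as the unique maximal $M^{(0)}$-consistent set, so every maximal consistent sequence begins with this $J_0$. Proposition~\ref{prop:consistency} then forces $s_j=0$ for every $j\in I_0\setminus J_0$, so any tropical tangent has the form $-p-s_ie_i$. I would next pass to $M^{(1)}$, a single row indexed by $i$ with columns indexed by $\A_1$ (nonempty by Lemma~\ref{lemma:nonempty}). Since $z_{-1}^{(0)}=0$, the row operation simply gives $z_i^{(1)}=z_i$, and the left-hand side $g_i^{(1)}$ has valuation at least $\nu_1$, so using Proposition~\ref{prop:a1} (which identifies $\nu_1-\nu_0$ with $\phi(p)$) we obtain $s_i\geq\phi(p)$. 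When $\#\A_1=1$, the rowspan of any nonempty $J_1\subset\{i\}$ would consist of a nonzero scalar multiple of the unique column, and hence contain a standard basis vector; thus $J_1=\emptyset$ is forced, and Proposition~\ref{prop:consistency} at level $m=1$ yields $s_i^{(1)}=0$, i.e.~$s_i=\phi(p)$.

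For sufficiency, the strategy is to apply Proposition~\ref{prop:lifting} to two different consistent sequences. To realize any $s_i>\phi(p)$ when $\#\A_1>1$, I would use $(J_0,\{i\})$ with $L=\{0\}$, taking $K_0'=\{-1,k\}$ for any $k\neq i$ with $M_k^{(0)}$ not proportional to $(1,1)$; such $k$ exists precisely because $J=\{i\}$ is maximal. Choosing $z_i$ with $\nu(z_i)=\nu_0+s_i>\nu_1$ then delivers the desired tangent $-p-s_ie_i$. To realize $s_i=\phi(p)$ when $\#\A_1=1$, I would instead use $(J_0,\emptyset)$ with $L=\{0,1\}$ and $K_1'=\{i\}$, fixing $z_i$ with $\nu(z_i)=\nu_1$. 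The boundary point $s_i=\phi(p)$ in the case $\#\A_1>1$ is captured by the Euclidean closure of the points with $s_i>\phi(p)$, which is exactly the closure operation built into the statement.

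The principal technical obstacle I anticipate is verifying the dimension criterion~(3) of Proposition~\ref{prop:lifting}: the two elements of $\A_0$, together with either a single element of $\A_1$ (in the $\#\A_1=1$ case) or a chosen difference $u-v$ for $u\neq v\in\A_1$ (in the $\#\A_1>1$ case), must generate a subspace of $\RR^{n+1}$ of dimension $3$. In the first case this is immediate from the smoothness hypothesis, which makes $\A_0\cup\{u\}$ a unimodular simplex. In the second case, linear independence of $\A_0\cup\{u-v\}$ amounts to $u-v$ not being parallel to the difference of the two elements of $\A_0$; I expect this to follow from the generic valuations hypothesis by choosing the pair $u,v$ appropriately among the facets of $F$ realizing the minimum lattice distance, and this is the step that will require care in the full proof.
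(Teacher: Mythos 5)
Your proof is correct and follows the same overall strategy as the paper: necessity via Proposition~\ref{prop:consistency}, sufficiency via Proposition~\ref{prop:lifting}. The one substantive departure is your choice of $L$ in the lifting step: $L=\{0\}$ when $\#\A_1>1$ and $L=\{0,1\}$ when $\#\A_1=1$, whereas the paper uniformly uses $L=\emptyset$. Both choices work and are in fact interchangeable here. With $L=\emptyset$, criterion~(3) asks that $u_0-v_0$ (writing $\A_0=\{u_0,v_0\}$) be independent from a difference $u-v$, $u\neq v\in\A_1$; with your $L=\{0\}$ it asks that $\{u_0,v_0,u-v\}$ span a $3$-dimensional space. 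These are equivalent because all exponent vectors in $\A$ share the same coordinate sum $d>0$, so any relation $u-v=au_0+bv_0$ forces $a+b=0$ and hence $u-v\in\RR(u_0-v_0)$. The needed independence follows from generic valuations for \emph{every} pair $u\neq v\in\A_1$, not only an appropriately chosen one: a dependency would make $\nu(c_u)-\langle q,u\rangle=\nu(c_v)-\langle q,v\rangle$ hold on all of $F$, contradicting the genericity hypothesis. So the step you anticipated as the main obstacle requires no case analysis on facets.

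One small slip to flag: you invoke Proposition~\ref{prop:a1} to identify $\nu_1-\nu_0$ with $\phi(p)$. But $\phi(p)$ is \emph{defined} to equal $\nu_1(p)-\nu_0(p)$ in the setup paragraph preceding this lemma, so no proposition is needed; moreover Proposition~\ref{prop:a1} in fact does not apply in this setting, since its hypothesis $\#I_1\geq\dim P$ fails here ($\#I_1=1<2=\dim F$), a point the paper raises explicitly just after the lemma when it relates $\phi$ to $\delta$. Your bound $s_i\geq\phi(p)$ nevertheless follows directly from Proposition~\ref{prop:consistency} together with the definition of $\phi$, so the necessity argument stands once the stray citation is removed.
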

\begin{proof}
Since $J_0=\{-1,i\}$, we have $I_1=\{i\}$. There is a unique maximal consistent $J_1$: this will be $J_1=\emptyset$ if $\#\A_1=1$ and $J_1=\{i\}$ if $\#\A_1>1$. By Proposition \ref{prop:consistency}, any tropical tangent at $p$ must have the form claimed.

On the other hand, we claim that the maximal consistent sequence $(J_0,J_1)$ satisfies the lifting criteria for $L=\emptyset$. Indeed, if $J_1=\emptyset$ the second and third criteria are straightforward to verify. If instead $J_1=\{i\}$, the second criteria is still straightforward. For the third criterion, we must show that for any two elements $u\neq v\in \A_1$, $u-v$ is not proportional to the difference of the two distinct elements of $\A_0$. 
If $u-v$ is proportional to the difference of the two distinct elements of $\A_0$, then 
\[
0=\nu(c_u)-\langle q,u\rangle - (\nu(c_v)-\langle q,v\rangle)
\]
for all $q\in F$. But this contradicts $\trop(X)$ having generic valuations. We conclude that $(J_0,J_1)$ satisfies the lifting criteria.

Note that $z_{i}^{(1)}=z_i$, so Proposition \ref{prop:lifting} guarantees the existence of all tangents with $s_i=\phi(p)$ (if $\#\A_1=1$) $s_i>\phi(p)$ (if $\#\A_1>1$). Taking the closure, we obtain the inequality $s_i \geq \phi(p)$ in the latter situation.
\end{proof}

In order to complete the proof of Proposition \ref{prop:face2}, we will need to interpret $\phi$ in terms of $\delta$.
The difficulty is that since $\dim F=2$, Proposition \ref{prop:a1} does not apply verbatim.  However, Lemmas \ref{lemma:a1} and \ref{lemma:distance} may be applied to conclude that $\A_1'$ (as opposed to $\A_1$) consists of those $u\in \A$ corresponding to facets $Q$ of $F$ whose lattice distance is closest to $p$. When is an element  $u$ of $\A_1'$ actually in $\A_1$? This will be the case if and only if $u_i^{(1)}\neq 0$, or equivalently, the facet $Q$ of $F$ corresponding to $u$ does not have $\langle Q \rangle=\langle e_i \rangle$.
\begin{proof}[Proof of Proposition \ref{prop:face2}]
	To start, notice that we cannot have $E_S=\emptyset$, since in that case, $\A_0=\A$, contradicting Lemma \ref{lemma:nonempty}.
Suppose first that not every element of $E_S$ is parallel to $e_i$. Then the set $\A_1$ consists of those $u\in \A$ corresponding to the elements of $E_S$ not parallel to $e_i$, and for $p$ in the relative interior of $S$ it follows that $\phi(p)=\dd(p)$. 
The claim now follows from Lemma \ref{lemma:prep}.

Next, we assume that $E_S$ consists only of elements parallel to $e_i$ and that the recession cone of $S$ doesn't contain $e_i$ or $-e_i$. Then for any point $q$ in the relative interior of $S$, the line segment $(q+\langle e_i \rangle )\cap S$ has two distinct endpoints on which $\phi$ takes the same values as $\dd$. Furthermore, since the endpoints of this segment correspond to different sets $\A_1$, there must be some point $q'$ on this segment for which $\#\A_1(q')>1$. Together with the concavity of $\phi$, Lemma \ref{lemma:prep} implies that the closure of the tropical tangents along this segment are exactly
\begin{equation}\label{eqn:eee}
	\{-p-\dd(p)e_i\ |\ p\in (q+\langle e_i \rangle )\cap S  \}-\RR_{\geq 0}\cdot e_i.
\end{equation}

If instead the recession cone of $S$ contains $e_i$ (but not $-e_i$), then for any point $q$ in the relative interior of $S$, the ray $(q+\langle e_i \rangle )\cap S$ has exactly one endpoint, on which $\phi$ takes the same values as $\dd$. Since $\phi\geq 0$ and this ray has recession cone $e_i$, again Lemma \ref{lemma:prep} implies that the closure of the tropical tangents along this ray are exactly as in \eqref{eqn:eee}.
Letting the point $q$ vary within the relative interior of $S$ in both of these cases, we obtain the description of $S^*$ from the proposition.

Finally, suppose that $-e_i$ is in the recession cone of $F$. We cannot have $e_i$ in the recession cone of $F$, otherwise the Newton polytope $\Delta_X$ will be contained in a hyperplane orthogonal to $e_i$, implying that $X$  is reducible or a cone.
For $u\in \A_0$, $u_i$ is some constant, say $\lambda$. For the $u\in \A$ corresponding to any face of $F$ parallel to $e_i$, we also have $u_i=\lambda$. Furthermore, since $-e_i$ is in the recession cone of $F$, we must have $u_i\geq \lambda$ for all $u\in \A$. We conclude that $\lambda=0$, since otherwise $X$ is not irreducible.

For any $u\in \A$ corresponding to an edge of $F$ not parallel to $e_i$, we must have $u_i=1$, otherwise $\trop(X)$ is not smooth. Furthermore, any $u\in \A_1$ must have $u_i\geq 1$. For $p$ in the relative interior of $F$, consider the ray $(p+\langle e_i \rangle )\cap F$. Near its endpoint $q$, $\A_1$ consists only of some set of $u\in \A$ which correspond to edges of $F$. It follows that  $\A_1(q-\lambda e_i)$ is constant for any $\lambda >0$, since for $u\in \A_1$ near $q$, $u_i=1$, and as $\lambda$ varies, only the $i$th coordinate matters.
This implies that $-p-\phi(p)e_i=-q$. 

It remains to see when $\#\A_1(p)>1$. But this is exactly when $q$ is a vertex of $F$ not adjacent to any edge of $F$ parallel to $e_i$. Lemma \ref{lemma:prep} now 
implies the third claim of the proposition. 
\end{proof}

\subsection{Face in two standard directions}\label{face:3}
We finally arrive in the case $\#J=2$. This is the situation of Proposition \ref{prop:face3}. Here, we have fixed a two-face $F$ with $\langle F \rangle=\langle e_i,e_j\rangle$ for some $i\neq j$. Then $J=\{i,j\}$, and $J_0=\{-1,i,j\}$, from which follows that $I_1=\{i,j\}$. It is straightforward to understand $\A_1$ and $\nu_1-\nu_0$: since $\#I_1=\dim F=2$, Proposition \ref{prop:a1} applies, so $\A_1$ consists of some $u\in \A$ corresponding to facets of $F$, and $\nu_1-\nu_0=\dd$.

Since $\nu_1-\nu_0=\dd$ and elements of $\A_1$ always correspond to facets of $F$, the set $\A_1$ is constant on the relative interior of any $S\in\cS_F$. Furthermore, since $\trop(X)$ has generic valuations, for any $S\in \cS_F$, $\dim S=3-\#\A_1=3-\#E_S$. 
By Proposition \ref{prop:consistency}, for $p$ in the relative interior of $S$, any tropical tangent must belong to 
\begin{equation}\label{eqn:tan}
-p-\dd(p)(e_i+e_j)-\sum_{\ell\in J_1}\RR_{\geq 0}\cdot e_\ell
\end{equation}
with $J_1$ an $M^{(1)}$-consistent set. In some cases, we will show that this set is the closure of all tropical tangents; in other cases, we will derive additional conditions.

\begin{proof}[Proof of Proposition \ref{prop:face3}]
First we suppose that $\dim S=0$, which implies that $\#E_S=\#\A_1=3$. Two columns in $M^{(1)}$ are proportional if and only if the corresponding edges in $E_S$ are parallel. It follows that the $2\times 3$ matrix $M^{(1)}$ must have full rank two. It is straightforward to check $\{i\}$ (or $\{j\}$) is consistent if and only if no two edges of $E_S$ are parallel to $e_i$ (or $e_j$, respectively). Likewise, $\{i,j\}$ is consistent if and only if no two edges of $E_S$ are parallel. 
In all cases, taking $J_1$ to be a maximal consistent set leads to a sequence $(J_0,J_1)$ which satisfies the lifting conditions for $L=\emptyset$. Indeed, this follows from the fact that each element of $\A_1$ belongs to a simplex in the triangulation of $\Delta_X$ containing $\A_0$. Thus, after possibly taking the Euclidean closure to account for non-maximal consistent sequences, the set of all possible tropical tangents is exactly that of \eqref{eqn:tan}. To summarize, these are exactly the tangents of the form 
\begin{equation}\label{eqn:tan2}
-p-\dd(p)(e_i+e_j)-\lambda_i\cdot e_i-\lambda_j\cdot e_j
\end{equation}
for $\lambda_i,\lambda_j\geq 0$ with at most one of $\lambda_i,\lambda_j$ non-zero if $E_S$ contains any parallel edges, and $\lambda_\ell=0$ if two edges of $E_S$ are parallel to $e_\ell$.

Next we suppose that $\dim S=1$, which implies $\#E_S=\#\A_1=2$. The rank of $M^{(1)}$ is two if and only if the two edges in $E_S$ are not parallel. Otherwise, the rank of $M^{(1)}$ is one. In the hypotheses of the proposition, we have assumed that the two edges in $E_S$ 
are not parallel. Since $M^{(1)}$ is a full rank $2\times 2$ matrix, $\{i,j\}$ cannot be an $M^{(1)}$-consistent set. On the other hand, it is straightforward to check that $\{i\}$ (or $\{j\}$) is consistent if and only if no edge of $E_S$ is parallel to $e_i$ (or $e_j$, respectively).
Any $(J_0,J_1)$ with $J_1$ maximal consistent satisfies the lifting criteria for $L=\emptyset$.
 Thus, after possibly taking the Euclidean closure to account for non-maximal consistent sequences, the set of all possible tropical tangents is exactly that of \eqref{eqn:tan}. 
We conclude that in this situation, the tropical tangents are exactly of the form
of \eqref{eqn:tan2}
for $\lambda_i,\lambda_j\geq 0$ with at most one of $\lambda_i,\lambda_j$ non-zero, and $\lambda_\ell=0$ if one edge of $E_S$ is parallel to $e_\ell$.

Finally, we suppose that $\dim S=2$, and the edge $E$ of $E_S$ is not parallel to $e_i$ or $e_j$. Then
$J_1=\emptyset$ is the unique $M^{(1)}$-consistent set, so $(J_0,J_1)$ is the unique consistent sequence. This satisfies the lifting criteria for $L=\emptyset$, so everything appearing in \eqref{eqn:tan} is a tropical tangent. Since however, $J_1=\emptyset$, there is only one tropical tangent: the tangent of \eqref{eqn:tan2} for $\lambda_i=\lambda_j=0$.

Putting together the above analysis, we arrive at the claim of Proposition \ref{prop:face3}.
\end{proof}

We will prove Propositions \ref{prop:face5} and \ref{prop:face4} together. These situations are more difficult, because the consistent sequences involved have a $J_2$ term.
We will assume that $i<j$.
For a fixed point $p$ in the relative interior of $S$, we will initially split things up into six cases, which are differentiated by the dimension of $S$ ($1$ or $2$), the behaviour of the edges in $E_S$, and the size of $\#\A_2$. We list them in the table below, omitting some cases that may be obtained by interchanging $i$ and $j$.
For each case, we list the unique maximal choice of $J_1$, the resulting $K_1$, $I_2$, and $J_2$, and then a choice of the set $L$ to be used for verifying the lifting criteria:

\vspace{.5cm}
\begin{center}
\begin{tabular}{|l|l| c| c c c c |c|}
	\cline{3-8}
	\multicolumn{2}{c|}{} &$\#\A_2$& $J_1$ & $K_1$ & $I_2$ & $J_2$ & $L$\\
	\hline
A.0&	$\dim S=1$, parallel edges in $E_S$,& $0$ & $\{i,j\}$ & $\{i\}$ & $\{j\}$ & $\emptyset$ & $\emptyset$\\
\cline{1-1}\cline{3-8}
A.1& not parallel to $e_i,e_j$ & $1$ & $\{i,j\}$ & $\{i\}$ & $\{j\}$ & $\emptyset$ & $\{j\}$\\
\cline{1-1}\cline{3-8}
A.2&		& $2$ & $\{i,j\}$ & $\{i\}$ & $\{j\}$ & $\{j\}$ & $\emptyset$\\

\hline
B.1&\multirow{2}{*}{$\dim S=1$, edges in $E_S$ parallel to $e_j$} & $1$ & $\{i,j\}$ & $\{i\}$ &  $\{j\}$& $\emptyset$&$\emptyset$\\
\cline{1-1}\cline{3-8}
B.2&& $2$ & $\{i,j\}$ & $\{i\}$ & $\{j\}$ & $\{j\}$&$\emptyset$\\
\hline
C.1&\multirow{2}{*}{$\dim S=2$, edge in $E_S$ parallel to $e_j$} & $1$ & $\{j\}$ & $\emptyset$ &  $\{j\}$& $\emptyset$&$\emptyset$\\
\cline{1-1}\cline{3-8}
C.2&& $2$ & $\{j\}$ & $\emptyset$ & $\{j\}$ & $\{j\}$&$\emptyset$\\
\hline
\end{tabular}
\end{center}
\vspace{.5cm}

In cases A.0, A.1 and A.2, $z_j^{(2)}=z_j-\alpha z_i$ for some non-zero $\alpha$. In all other cases, $z_j^{(2)}=z_j$. Applying Proposition \ref{prop:consistency}, we see that any tropical tangent to $p$ must be of the form
\[
-p-\dd(p)(e_i+e_j)-\lambda_i\cdot e_i-\lambda_j\cdot e_j
\]
for $\lambda_i,\lambda_j\geq 0$ satisfying the following constraints:
\begin{enumerate}[({Case} A)]
	\item Either $0\leq \lambda_i=\lambda_j\leq \nu_2-\nu_1$, or $\lambda_i,\lambda_j\geq \nu_2-\nu_1$ with either $\lambda_i=\nu_2-\nu_1$ or $\lambda_j=\nu_2-\nu_1$ unless $\#\A_2=2$. Note that if $\A_2=\emptyset$, we use the convention that $\nu_2=+\infty$, which implies that $\lambda_i=\lambda_j$.\label{casea}
	\item  $\lambda_j\geq \nu_2-\nu_1$ with equality unless $\#\A_2=2$. \label{caseb}
	\item  $\lambda_i=0$, and $\lambda_j\geq \nu_2-\nu_1$ with equality unless $\#\A_2=2$. \label{casec}
 \end{enumerate}
The closure for the set of tropical tangents to $p$ is in fact this entire set, since we may again apply Proposition \ref{prop:lifting}. The maximal consistent sequences $(J_0,J_1,J_2)$ satisfy the lifting criteria for our choice of $L$ in every case; this follows from $\trop(X)$ having generic valuations.
Note that in case \ref{casea}, we may choose $z_i=z_i^{(1)}$ and $z_j^{(2)}$ as we wish (subject to conditions on their valuations), which also allows us to choose $z_j$ as we wish. For case \ref{casea}.0, we have $z_j^{(2)}=0$, which implies that $z_j$ is already determined from $z_i$. 

It remains to check that in each case, as $p$ ranges over the relative interior of $S$, the closure of the set of tropical tangents equals the claimed set $S^*$ from the proposition. 

\begin{proof}[Proof of Proposition \ref{prop:face4}]
 If $S$ has no endpoints, then we must have $\A_2=\emptyset$, so we are in case \ref{casea}.0, and we obtain that $S^*$ consists of $-p-\lambda_ie_i-\lambda_je_j$ for $\lambda_i=\lambda_j\geq 0$ and $p\in S$. We will subsequently assume that $S$ has endpoints.

Near an endpoint $q$ of $S$, $\A_2$ consists of the element $u\in \A$ corresponding to the edge of $E_q$ not in $E_S$. This implies that as $p$ approaches the endpoints of $S$, $\nu_2-\nu_1$ approaches $0$.
If $S$ is bounded and hence has two distinct endpoints, then the set $\A_2$ must change over the course of $S$, so at some point $p$ in the interior of $S$, $\A_2(p)$ consists of two elements. Using our analysis of situation A and B above, a straightforward convexity argument shows that if $S$ is bounded, we obtain exactly the set claimed in the proposition.

Assume instead that $S=q+\RR_{\geq 0}\cdot v$ for $v=\alpha_ie_i+\alpha_je_j$. 
Then there is some $p$ in the interior of $S$ for which $\#\A_2(p)=2$ if and only if $S$ is purely primitive. Indeed, since $\trop(X)$ is smooth, there is \emph{some} edge $E$ of $F$ such that $\dd_E(q+v)-\dd_E(q)=1$. Furthermore, $(\nu_2(q+v)-\nu_0(q+v))-(\nu_2(q)-\nu_0(q))\geq 1$, so on all but a bounded portion of $S$, $\nu_2-\nu_0$ equals some such $\dd_E$. If $S$ is purely primitive, then $\nu_2-\nu_0=\dd_E$ all along $S$, otherwise $\nu_2-\nu_0$ is not linear, so there must be some point for which $\#\A_2>1$.

We must now differentiate based on the direction of $v$. Take for instance $\alpha_i=\alpha_j=-1$. Then $-p-\dd(p)(e_i+e_j)-(\nu_2-\nu_1)(e_i+e_j)=-q-\dd(q)(e_1+e_2)-\beta(e_i+e_j)$ for some $\beta\geq 0$ for all $p\in S$, with $\beta=0$ if $S$ is purely primitive. If $S$ is purely primitive, we are always in case \ref{casea}.1, and obtain exactly the tangents $-p-\dd(p)(e_i+e_j)$ for $p\in S$ and $-q-\dd(q)(e_i+e_j)-\lambda_ie_i-\lambda_je_j$ with $\lambda_i\lambda_j=0$. If $S$ is not purely primitive, then for some $p$ we are in case \ref{casea}.2, and we may drop the condition $\lambda_i\lambda_j=0$.

If instead $\alpha_i=0$ and $\alpha_j=-1$, then $-p-\dd(p)(e_i+e_j)-(\nu_2-\nu_1)\cdot e_j=-q-\dd(q)(e_1+e_2)-\beta e_j$ for some $\beta\geq 0$ for all $p\in S$, with $\beta=0$ if $S$ is purely primitive. If $S$ is purely primitive, we are always in case \ref{caseb}.1, and obtain exactly the tangents $-q-\dd(q)(e_i+e_j)-\lambda_ie_i$. If $S$ is not purely primitive, then for some $p$ we are in case \ref{caseb}.2, and we will 
obtain tangents $-q-\dd(q)(e_i+e_j)-\lambda_ie_i-\lambda_je_j$. A similar analysis holds if $\alpha_i=-1$ and $\alpha_j=0$. 

For the remaining directions of $v$ we are always in case \ref{casea}. Let 
\[\phi_1(p)=-p-\dd(p)(e_i+e_j)\] and \[\phi_2(p)=-p-(\nu_2-\nu_0)(e_i+e_j).\] 
Then $S^*$ consists of  
\[\conv \{\phi_k(p)\ |\ p\in S,\ k=1,2\},\]
along with
\[\phi_2(p)-\lambda_i e_i-\lambda_j e_j\]
for all $p\in S$, with $\lambda_i,\lambda_j\geq 0$ and $\lambda_i\lambda_j=0$ for given $p$ unless $\phi_2$ is non-linear at $p$.
 Suppose for example that $\alpha_i=-1$ and $\alpha_j<-1$.
 Then the ray $\phi_2(q+\gamma v)$ for $\gamma$ sufficiently large is parallel to $e_j$. In fact, if $S$ is purely primitive, then this is true for all $\gamma\geq 0$, and the ray is the ray in direction $e_j$ with endpoint $-q-\dd(q)(e_i+e_j)$. The description of $S^*$ in the proposition follows in this case. If $S$ is not purely primitive, then since for for $p$ in $S$ we obtain $\phi_2(p)-\RR_{\geq 0}e_i-\RR_{\geq 0}e_j$, the description also follows in this case. 
For the remaining directions of $v$, a similar straightforward analysis always leads to the set $S^*$ claimed in the proposition. We leave the details to the reader. 
\end{proof}
\begin{proof}[Proof of Proposition \ref{prop:face5}]
We now  find ourselves in case \ref{casec} from above.
First, assume that $E$ is bounded. 
 Then for any point $q$ in the relative interior of $S$, the line segment $S_q=(q+\langle e_i \rangle )\cap S$ has two distinct endpoints. Near an endpoint $v$ of $S_q$, $\A_2$ consists of the element $u\in \A$ corresponding to an edge $Q$ of $F$ for which $\dd_Q(v)=\dd$. This implies that as $p$ approaches the endpoints of $S$, $\nu_2-\nu_1$ approaches $0$. Furthermore, the set $\A_2$ must change over the course of $S_q$, so at some point in the interior of $S$, $\A_2$ consists of two elements. A convexity argument shows that the closure of the set of tropical tangents agrees with the $S^*$ from the proposition statement. 

If $E$ is unbounded but $-e_j$ is not in the recession cone of $F$, then $E$ must be unbounded in direction $e_j$. It is no longer the case that we necessarily find points for which $\A_2$ consists of more than one element. Nonetheless, since $S$ is unbounded in direction $e_j$, a convexity argument still shows that we get the desired set $S^*$. 

Finally, suppose that $-e_j$ is in the recession cone of $F$. 
Similar to at the end of the proof of Proposition \ref{prop:face2}, we must have that for $u\in \A_0$, $u_j=0$, as well as for those $u\in \A$ corresponding to any face of $F$ parallel to $e_j$.
For any $u\in \A$ corresponding to an edge of $F$ not parallel to $e_j$, we must have $u_j=1$, otherwise $\trop(X)$ is not smooth. Furthermore, any $u\in \A_2$ must have $u_j\geq 1$. For $p$ in the relative interior of $F$, consider the ray $(p+\langle e_j \rangle )\cap F$.

Near its endpoint $q$, $\A_2$ consists only of some set of $u\in \A$ which correspond to edges of $F$. It follows that  $\A_2(q-\lambda e_j)$ is constant for any $\lambda >0$, since for $u\in \A_2$ near $q$, $u_i=1$, and as $\lambda$ varies, only the $i$th coordinate matters.
This implies that $-p-\dd(p)(e_i+e_j)-(\nu_2-\nu_1)=-q-\dd(q)(e_i+e_j)$. 

It remains to see when $\#\A_2(p)>1$. But this is exactly when $q$ is an interior vertex of $S'$.
The claim of the proposition now follows from the description of tangents in case \ref{casec} above.
\end{proof}

\subsection{Putting it all together}\label{face:4}
Our analysis of tropical tangents to points $p\in \trop(X)$ yields a complete description of $\trop(X^*)$:

\begin{proof}[Proof of Theorem \ref{thm:surface}]
	We argue as in the proof of Theorem \ref{thm:dualcurve} that the closure of the union of all tropical tangents is $\trop(X^*)$.
	Indeed, the only points in $\trop(X^*)$ we might miss are  those which are tangent at points in the boundary of $X$. But these will form a set in $\trop(X^*)$ of codimension at least one. Removing it from $\trop(X^*)$ and taking the closure again yields $\trop(X^*)$.
\end{proof}

We now finally show Theorem \ref{thm:face}:
\begin{proof}[Proof of Theorem \ref{thm:face}]
If $J=\emptyset$, then the claim follows directly from Proposition \ref{prop:face}.
If $J=\{i\}$, then we are in the situation of Proposition \ref{prop:face2}, and the hypothesis of the theorem guarantees that $-e_i$ is not in the recession cone of $F$. Applying the first two parts of the proposition, it is straightforward to verify that $F^*=-F-\RR_{\geq 0}\cdot e_i$.

Finally, suppose that $J=\{i,j\}$. We are now in the situations of Propositions \ref{prop:face3}, \ref{prop:face5}, and \ref{prop:face4}. By assumption, no non-zero element of the positive hull of $-e_i$ and $-e_j$ is in the recession cone of $F$. This means in particular that we may exclude the third case of Proposition \ref{prop:face3}, the second case of Proposition \ref{prop:face5}, and most cases of Proposition \ref{prop:face4}. 

It is straightforward to check that for $p$ in the boundary of $F$, $-p$ is in $F^*$. Furthermore, 
using a convexity argument and the assumption on the recession cone of $F$, it follows that $-F$ is in $F^*$. 
Consider any point $-q$ in the interior of $-F$. We claim that $-q-\RR_{\geq 0}\cdot e_i\in F^*$. Indeed, if $F$ has $e_i$ in its recession cone, the claim is immediate. Otherwise, let $-p_1$ and $-p_2$ be the endpoints of the segment $(-q+\langle e_i \rangle)\cap -F$. Then again by the assumption on the recession cone of $F$ and the fact that $\dd(p)=0$ on the boundary of $F$, there is a piecewise linear path $\gamma:[0,1]\to F$  with endpoints $p_1$ and $p_2$ such that 
\[
-\gamma(t)-\dd(\gamma(t))(e_i+e_j)
\]
is contained in $(-q+\langle e_i \rangle)$ for any $t\in [0,1]$. At some point, $\gamma$ must cross a one dimensional cell $S$ of the subdivision $\cS_F$ induced by $\dd$, say at the point $p$. If the edges of $E_S$ are parallel, Proposition \ref{prop:face4} applies, and we obtain that 
\[
	-p-\dd(p)(e_i+e_j)-\RR_{\geq 0}\cdot e_i
\]
is contained in $F^*$, but $-p-\dd(p)$ lies in $(-q+\langle e_i \rangle)$, and the claim follows. If instead the edges of $E_S$ are not parallel, Proposition \ref{prop:face3} applies. The claim again follows, as long as one edge of $E_S$ is not parallel to $e_i$. But in this case, we may move into the neighboring two-dimensional cell $S'$ whose edge is parallel to $e_i$ to again obtain the desired tropical tangents (using Proposition \ref{prop:face5}).

A similar argument shows that $-q-\RR_{\geq 0}\cdot e_j\in F^*$. Now, to show that
\[F^*=-F-\RR_{\geq 0}\cdot e_i-\RR_{\geq 0}\cdot e_j\]
we again consider several cases. If $F$ is unbounded, the claim already follows from the above discussion, so we may assume that $F$ is bounded. This implies that $\cS_F$ must contain a zero-dimensional cell $S$. If no two edges of $E_S$ are parallel, then Proposition \ref{prop:face3} tells us that 
\begin{equation}\label{lasteq}
	-q-\dd(q)(e_i+e_j)-\RR_{\geq 0}\cdot e_i-\RR_{\geq 0}\cdot e_j
\end{equation}
is contained in $F^*$ for $q=S$, which together with the above discussion implies the claim.

Suppose instead that two edges of $E_S$ are parallel. Then moving to the neighboring one-dimensional cell $S'$ with the two edges of $E_{S'}$ parallel, we may apply Proposition \ref{prop:face5}. Since $F$ is bounded, we again obtain tangents as in \eqref{lasteq} for a point $q$ arbitrarily close to $S$. Again, the claim follows. 
\end{proof}

\section{Future Directions}\label{sec:conclusion}

\subsection{Tropical projective duality, and singular curves}
One of the striking features of projective duality is that it is an involution: given $X\subset \PP^n$, we have $(X^*)^*=X$. It is tempting to try to extend our descriptions of tropical dual curves and surfaces to give an involution on the set of tropical plane curves, and on the set of tropical surfaces in $\RR^3$. Here, we will focus on the situation of plane curves.

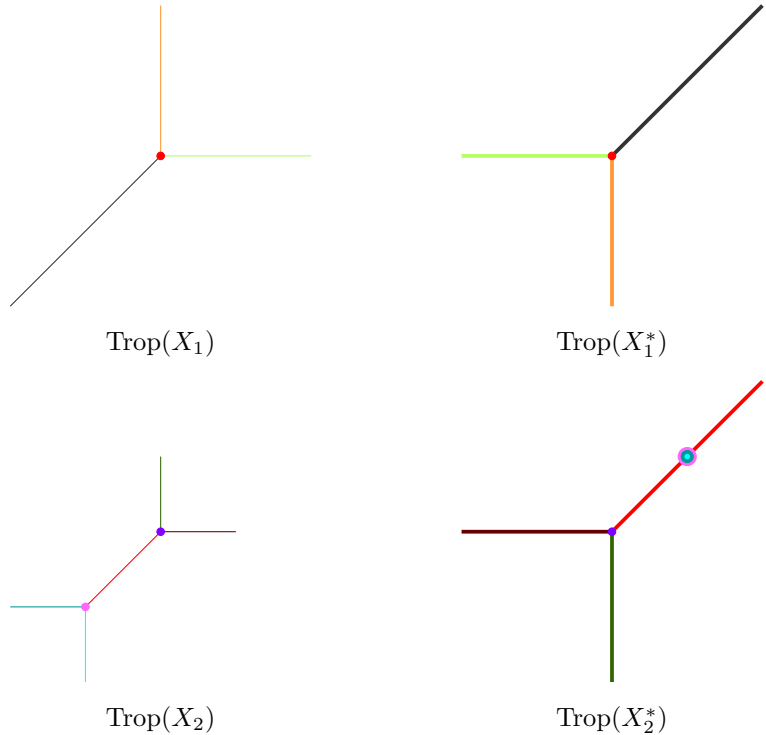
\begin{figure}
\begin{tikzpicture}

\begin{scope}
\draw[color1a] (-2,-2) -- (0,0);
\draw[color2] (0,0) -- (2,0);
\draw[color3] (0,0) -- (0,2);

\draw[fill, color4] (0,0) circle [radius=0.05];
\node at (0,-2.5) {$\trop(X_1)$};
\end{scope}

\begin{scope}[shift={(6,0)}]
\draw[line width=.5mm,color1a] (2,2) -- (0,0);
\draw[line width=.5mm,color2] (0,0) -- (-2,0);
\draw[line width=.5mm,color3] (0,0) -- (0,-2);

\draw[fill, color4] (0,0) circle [radius=0.05];
\node at (0,-2.5) {$\trop(X_1^*)$};
\end{scope}

\begin{scope}[shift={(0,-5)}]
\draw[color4] (-1,-1) -- (0,0);
\draw[color5] (0,0) -- (1,0);
\draw[color6] (0,0) -- (0,1);
\draw[color7] (-1,-1) -- (-2,-1);
\draw[color8] (-1,-1) -- (-1,-2);

\draw[fill, color9] (0,0) circle [radius=0.05];
\draw[fill, color10] (-1,-1) circle [radius=0.05];
\node at (0,-2.5) {$\trop(X_2)$};
\end{scope}
\begin{scope}[shift={(6,-5)}]
\draw[line width=.5mm,color4] (2,2) -- (0,0);
\draw[line width=.5mm,color5] (0,0) -- (-2,0);
\draw[line width=.5mm, color6] (0,0) -- (0,-2);

\draw[fill, color9] (0,0) circle [radius=0.05];
\draw[fill, color10] (1,1) circle [radius=.12];
\draw[fill, color7] (1,1) circle [radius=0.08];
\draw[fill, color8] (1,1) circle [radius=0.03];
\node at (0,-2.5) {$\trop(X_2^*)$};
\end{scope}
\end{tikzpicture}
\caption{Tropical quadrics with the same tropical dual curve}\label{fig:curvex}
\end{figure}

The first difficulty we face is that the map that takes a smooth tropical curve $\trop(X)$ to its dual $\trop(X^*)$ is not injective: both tropical quadrics $\trop(X_1)$ and $\trop(X_2)$ pictured in the left of Figure \ref{fig:curvex} have the same dual, pictured on the right. As in Example \ref{ex:dualCurveExample}, thick lines represent edges of multiplicity two, and the colors on the right correspond to the contributing pieces on the left. The issue here is that the rays emanating from the bottom left vertex of $\trop(X_2)$ both are contracted to points in $\trop(X_2^*)$. 

To remedy this non-injectivity, we may decorate $\trop(X^*)$ by marking each point $-p$ of $\trop(X^*)$ for which $p$ is a vertex of $\trop(X)$ with edges emanating in directions $-e_0$, $-e_1$, and $-e_2$. We call $\trop(X^*)$ with these markings the \emph{decorated tropical dual} of $\trop(X)$.
\begin{prop}\label{prop:injectivity}
The map which assigns to any tropical curve $\trop(X)$ its decorated tropical dual is injective. In other words, we may recover any smooth tropical curve $\trop(X)$ from its decorated tropical dual.
\end{prop}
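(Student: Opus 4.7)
The plan is to describe an explicit reconstruction procedure that recovers a smooth tropical plane curve $\trop(X)$ from its decorated tropical dual, in three steps: the vertex set, the non-standard edges, and the standard edges.

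First, the vertices of $\trop(X)$ are recovered by negating the marked points. Second, by Theorem \ref{thm:dualcurve}(1), the cells of $\trop(X^*)$ in non-standard directions (not parallel to any of $e_0, e_1, e_2$) are exactly the sets $-E$ for non-standard edges $E$ of $\trop(X)$; each such cell is either a bounded segment between two marked vertices or an unbounded ray emanating from a single marked vertex, and negating recovers each $E$.

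Third, for each vertex $p$ and each $i \in \{0,1,2\}$ we determine which of three mutually exclusive cases holds at $p$: (a) $p$ has no adjacent edge parallel to $e_i$; (b) $p$ has an adjacent edge in direction $+e_i$; or (c) $p$ has an adjacent edge in direction $-e_i$. The exclusivity of (b) and (c) follows from balancing together with the smoothness of $\trop(X)$, since two edges at a trivalent vertex in opposite standard directions would force the third primitive direction to vanish. Let $m_-$ and $m_+$ denote the multiplicities that $\trop(X^*)$ assigns to a small initial segment of the ray from $-p$ in directions $-e_i$ and $+e_i$ respectively, and set $\Delta m_i = m_- - m_+$. Cells of $\trop(X^*)$ that merely pass through $-p$ contribute equally to $m_-$ and $m_+$, so $\Delta m_i$ equals the total multiplicity of cells of $\trop(X^*)$ that initiate at $-p$ in direction $-e_i$. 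By Theorem \ref{thm:dualcurve}(2),(3), such initiating cells consist of a multiplicity-$1$ ray from condition (2), present iff $p$ has no edge parallel to $e_i$, together with a multiplicity-$2$ ray from condition (3) applied to an edge with $-e_i$-endpoint $p$, present iff $p$ has an edge in direction $+e_i$. Hence $\Delta m_i \in \{1, 2, 0\}$ distinguishes cases (a), (b), (c) respectively.

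In cases (b) and (c) the direction of the standard edge at $p$ is now known; to decide whether this edge is bounded or unbounded, we observe that an edge of the polyhedral complex $\trop(X)$ cannot contain another vertex in its interior, so the edge terminates at the nearest vertex of $\trop(X)$ in the appropriate direction if such a vertex exists, and is otherwise unbounded. This criterion uses only the vertex set recovered in the first step, and in case (b) it is consistent with case (c) applied at the other endpoint. The main obstacle is the case analysis underlying the formula for $\Delta m_i$; the crucial point is that taking the difference $m_- - m_+$ cancels all contributions from rays passing through $-p$ that originate at other vertices, leaving only the initiating contributions enumerated directly by Theorem~\ref{thm:dualcurve}.
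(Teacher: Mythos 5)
The first step of your reconstruction does not hold: the decoration does \emph{not} mark every vertex of $\trop(X)$. A point $-p$ of $\trop(X^*)$ is marked only when $p$ is a vertex of $\trop(X)$ all three of whose incident edges emanate in the anti-standard directions $-e_0,-e_1,-e_2$; these are precisely the vertices whose entire local contribution to $\trop(X^*)$ collapses to a single point (as in Figure~\ref{fig:curvex}, the motivating example for the decoration, where only the bottom-left vertex of $\trop(X_2)$ receives a marking while the origin does not). Consequently your second step is not merely imprecise but internally contradictory: an endpoint $p$ of a non-standard edge $E$ has, by assumption, at least one non-standard edge, so $p$ can never have all three edges anti-standard, and hence $-p$ is never marked --- the segments $-E$ never run between marked vertices. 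Since your third step (the $\Delta m_i$ count) and your final boundedness criterion both presuppose that the vertex set of $\trop(X)$ is already in hand, the argument never actually recovers that set, which is the real content of the proposition.

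The $\Delta m_i=m_--m_+$ device is a nice quantitative observation and, in isolation, would correctly distinguish cases (a), (b), (c) at a point already known to be the negative of a vertex of $\trop(X)$. But most vertices of $\trop(X)$ must be located as vertices of the polyhedral complex $\trop(X^*)$ itself, and one must separate these from artifacts such as transversal crossings of two unrelated edges of $\trop(X^*)$. The paper handles this via a local case analysis at each vertex of $\trop(X^*)$: discard every line passing through the vertex (removing transversal-crossing contributions) and match what remains against a short list of possible dual vertex types, with the markings filling in exactly for those vertices whose contribution to $\trop(X^*)$ is a single point and would otherwise be invisible. If you replaced your first two steps by such an identification of the vertex set, your $\Delta m_i$ computation could then supply a clean quantitative route to the local picture of $\trop(X)$ at each recovered vertex; as written, however, the argument presupposes what it is supposed to establish.
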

\begin{proof}
The claim of the proposition follows by analyzing the local contribution to $\trop(X^*)$ of the portion of $\trop(X)$ surrounding any vertex $p$. Here, we differentiate among different cases based on the number of standard directions $e_0,e_1,e_2$ emanating from $p$, the number of anti-standard directions $-e_0,-e_1,-e_2$, and the number of non-standard directions.
Figure \ref{figure:local} lists the $5$ cases which occur, and gives an example of what $\trop(X)$ and $\trop(X^*)$ may look like in a neighborhood of the vertex. As before, thick edges denote edges of multiplicity two. The circle in the final case denotes the marking on the decorated tropical dual.

We observe that the vertex $-p$ of $\trop(X^*)$ is different in each of the five cases. Also observe that none of these vertices has a line passing through it.
Hence, to recover $\trop(X)$ from $\trop(X^*)$, we may proceed as follows: 
\begin{enumerate}
	\item For each vertex $v$ of $\trop(X^*)$, remove any lines through the vertex;
	\item Match the remaining local picture around $v$ to one of the five dual vertex types;
	\item Recover the local picture of $\trop(X)$ at $-v$. 
\end{enumerate}
\end{proof}

\begin{figure}
	\begin{tabular}{|c |c |c |c |c| p{3cm}|}
\hline
\# std. & \# anti-std. & \# non-std. & $\trop(X)$ & $\trop(X^*)$ & dual vertex type\\
\hline
&&&&&\\
3 & 0 & 0 &
\begin{tikzpicture}[scale=.4]
\draw (0,0) -- (-1,-1);
\draw (0,0) -- (0,1);
\draw (0,0) -- (1,0);
\end{tikzpicture}
&
\begin{tikzpicture}[scale=.4]
\draw [line width=.5mm] (0,0) -- (1,1);
\draw [line width=.5mm] (0,0) -- (0,-1);
\draw [line width=.5mm] (0,0) -- (-1,0);
\end{tikzpicture}
&
trivalent, all edges multiplicity two
\\
&&&&&\\
\hline
&&&&&\\
1 & 1 & 1 &
\begin{tikzpicture}[scale=.4]
\draw (0,0) -- (1,-1);
\draw (0,0) -- (0,1);
\draw (0,0) -- (-1,0);
\end{tikzpicture} &
\begin{tikzpicture}[scale=.4]
\draw [line width=.5mm] (0,0) -- (0,-1);
\draw  (0,0) -- (1,1);
\draw  (0,0) -- (-1,1);
\end{tikzpicture}
&
trivalent, one edge multiplicity two
\\
&&&&&\\
\hline
&&&&&\\
1 & 0 & 2 &
\begin{tikzpicture}[scale=.4]
\draw (0,0) -- (.5,-1);
\draw (0,0) -- (0,1);
\draw (0,0) -- (-1,1);
\end{tikzpicture} &
\begin{tikzpicture}[scale=.4]
\draw [line width=.5mm] (0,0) -- (0,-1);
\draw  (0,0) -- (1,-1);
\draw  (0,0) -- (-.5,1);
\draw  (0,0) -- (-1,0);
\draw  (0,0) -- (1,1);
\end{tikzpicture}
&five-valent
\\
&&&&&\\
\hline
&&&&&\\
0 & 1 & 2 &
\begin{tikzpicture}[scale=.4]
\draw (0,0) -- (-.5,1);
\draw (0,0) -- (0,-1);
\draw (0,0) -- (1,-1);
\end{tikzpicture} &
\begin{tikzpicture}[scale=.4]
\draw  (0,0) -- (-1,1);
\draw  (0,0) -- (.5,-1);
\draw (0,0) -- (-1,0);
\draw (0,0) -- (1,1);
\end{tikzpicture}
&four-valent
\\
&&&&&\\
\hline
&&&&&\\
0 & 0 & 3 &
\begin{tikzpicture}[scale=.4]
\draw (0,0) -- (1,1);
\draw (0,0) -- (0,-1);
\draw (0,0) -- (-1,0);
\end{tikzpicture} &
\begin{tikzpicture}[scale=.4]
\draw[fill] (0,0) circle [radius=0.15];
\end{tikzpicture}
&
marked
\\
&&&&&\\
\hline
\end{tabular}
\caption{Local contributions of vertices to $\trop(X^*)$}\label{figure:local}
\end{figure}

The second difficulty for viewing tropical duality as an involution on plane curves is that even when $\trop(X)$ is a smooth tropical curve, $\trop(X^*)$ is typically not smooth. However, our Theorem \ref{thm:dualcurve} describing the tropical dual curve is only applicable to smooth tropical curves. 
Thus, it would be very interesting to extend this theorem to include singular plane curves. Any such extension would have to take into account some information on the singularities of $X$, since for tropically singular $\trop(X)$, $\trop(X^*)$ is not determined by $\trop(X)$. One possible approach to account for singularities would be by defining a tropical version of Chern-Mather classes. In fact, we believe that the markings in our decorated tropical dual may be interpreted in this fashion.

\subsection{Bitangents}
Bitangents of algebraic plane curve are in bijection with ordinary nodes of its projective dual. Pl\"ucker's formula, together with the fact that a general plane curve $X$ of degree $d$ has $3d(d-2)$ flexes, implies that $X$ has \[\frac{(d+3)d(d-2)(d-3)}{2}\] bitangents. Similarly, we believe that tropical bitangents may be detected via projective duality. For example, let $X$ be a generic plane curve whose tropicalization is depicted on the left of Figure \ref{fig:bitangents}; the tropicalization of its dual $X^*$ is depicted on the right, where as usual, thick lines represent edges of multiplicity $2$. 
The point $p$ where two branches of $\trop(X^*)$ meet transversally corresponds to a tropical bitangent line $\Lambda$ of $\trop(X)$, depicted as a dashed line. By  \cite[Theorem 3.1]{LM17}, $\Lambda$ lifts to $4$ bitangents of $X$, which is exactly the intersection multiplicity of the two branches of $\trop(X^*)$ at $p$. We believe that this is more than just a coincidence.

\begin{conj}
Let $X$ be a hypersurface in $\PP^n$ such that $\trop(X)$ is smooth, and suppose that  $n$ faces of $\trop(X^*)$ meet transversally with multiplicity $k$ at a point $p$. Then there  are $k$ distinct $n$-tangent hyperplanes $H$ with  $\trop(H)=p$. 
\end{conj}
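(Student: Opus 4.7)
The plan is to reformulate the conjecture using the $n$-fold fibered product of the conormal variety, and then apply Osserman-Payne lifting. Since $\dim\trop(X^{*})=n-1$, the hypothesis forces $\defect(X)=0$, so the conormal projection $\pi\colon W_{X}\to X^{*}$ is birational. Form the $n$-fold fibered product
\[
Y \;=\; \underbrace{W_{X}\times_{X^{*}}\cdots\times_{X^{*}}W_{X}}_{n\ \text{factors}},
\]
with projection $\rho\colon Y\to X^{*}$, and let $Y^{\circ}\subseteq Y$ be the open subvariety where the $n$ projections of $Y$ to $\PP^{n}$ give pairwise distinct points of $X$. Points of $Y^{\circ}$ are exactly ordered tuples $(x_{1},\dots,x_{n},H)$ with $H$ tangent to $X$ at each of the distinct smooth points $x_{i}$; hence the conjecture amounts to showing that $\#\bigl(\rho^{-1}(H)\cap Y^{\circ}\bigr)/S_{n}=k$ summed over $H$ with $\trop(H)=p$.

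First I would analyze which cells of $\trop(Y^{\circ})$ map to $p$ under $\trop(\rho)$. Because $\pi$ is birational and $\trop(X)$ is smooth, for each facet $\sigma_{i}$ of $\trop(X^{*})$ incident to $p$ there is a unique top-dimensional cell $\tau_{i}\subseteq\trop(W_{X})$ that maps birationally onto $\sigma_{i}$ (this uses the explicit description of $\trop(W_{X})$ in terms of the systems $G^{(m)}$ from Section \ref{sec:consistency}, which records the tangency cell $P_{i}\subseteq\trop(X)$ via the other projection). Each unordered choice of an $n$-tuple $\{\tau_{1},\dots,\tau_{n}\}$ of distinct such cells then determines a cell $T\subseteq\trop(Y^{\circ})$, namely the tuples $(y_{1},\dots,y_{n})$ with $y_{i}\in\tau_{i}$ and $\trop(\pi)(y_{i})=\trop(\pi)(y_{j})$ for all $i,j$. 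The transversality hypothesis implies $\trop(\rho)(T)=\sigma_{1}\cap\cdots\cap\sigma_{n}=\{p\}$, so $T$ maps to $p$ as a zero-dimensional tropical cycle.

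Next I would compute its multiplicity. Because the $\sigma_{i}$ meet transversally with total multiplicity $k$, a stable intersection computation together with the fact that each $\trop(\pi)|_{\tau_{i}}$ has degree one shows that the contribution of $T$ to the intersection cycle in $\trop(X^{*})$ at $p$ equals precisely $k$. Applying the Osserman-Payne lifting theorem (Theorem \ref{thm:lifting}) to $\rho\colon Y\to X^{*}$ restricted to the off-diagonal component containing $T$, together with Corollary \ref{cor:lifting} iterated over the $n$ transverse facets, one obtains $k\cdot n!$ ordered algebraic preimages in $Y^{\circ}$ tropicalizing into $T$, i.e., $k$ genuine $n$-tangent hyperplanes $H$ with $\trop(H)=p$.

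The main obstacle will be the ``off-diagonal lifting'' step: one must show that the $n$ cells $\tau_{i}$ truly parameterize tangencies at distinct points of $X$, so that the intersection lifts into $Y^{\circ}$ rather than getting absorbed into the diagonal component of $Y$. When $\trop(X)$ is smooth the cells $P_{i}\subseteq\trop(X)$ corresponding to distinct $\sigma_{i}$ can coincide, in which case one must use the fact that distinct lifts $x_{i}\in X$ with $\trop(x_{i})\in P_{i}$ generically exist and contribute separately. A secondary difficulty is that the fibered product $Y$ may be non-reduced along its diagonal, so that tropical multiplicities in $\trop(Y)$ need to be computed with the same techniques developed in Section \ref{sec:mult} (via Proposition \ref{prop:weights} applied to an expanded system of equations $G^{(m)}$ built simultaneously at all $n$ points). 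Verifying these steps in full generality appears to require a careful tropical intersection-theoretic analysis of $\rho$ along the locus of multi-tangencies, which is the technical heart of the conjecture.
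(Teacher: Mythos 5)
Your proposal is a genuinely different strategy from the one the paper actually employs, and it aims for far more generality than the paper attempts. The paper only proves the conjecture for plane curves that are generic in the sense of \cite[Assumption 3.3]{LM17}, and the proof is short and concrete: it identifies the tropical bitangent line $\Lambda$ dual to the transverse intersection point $p$, uses the explicit description of $\trop(X^*)$ from Theorem \ref{thm:dualcurve} to show $\Lambda$ is tangent to $\trop(X)$ at two distinct points, and then cites \cite[Theorem 3.1]{LM17} for the fact that $\Lambda$ lifts to exactly four algebraic bitangents, matching the intersection multiplicity. Your strategy instead builds an $n$-fold fibered product of the conormal variety $W_X$ over $X^*$, restricts to the off-diagonal locus, and tries to lift via Theorem \ref{thm:lifting} and Corollary \ref{cor:lifting}. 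If this could be pushed through, it would cover all dimensions $n$, which would be a real advance over the paper.

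However, your proposal is not a proof; it is a roadmap with acknowledged gaps, and those gaps are exactly where the difficulty lives. The two obstructions you flag at the end are genuine and unresolved. First, the Osserman--Payne theorem, as stated and as used throughout this paper, applies to intersections of subvarieties of a torus, and the assertion that ``a stable intersection computation\ldots shows the contribution equals $k$'' conflates stable intersection of the ambient facets of $\trop(X^*)$ with the tropicalization of the fibered product $Y$ --- these are not the same in the presence of a non-trivial diagonal. Second, even establishing that $T$ lies in $\trop(Y^\circ)$ rather than being absorbed into the diagonal requires an argument, and your suggestion that ``distinct lifts generically exist and contribute separately'' is precisely what needs to be proven, since distinct facets $\sigma_i$ of $\trop(X^*)$ can arise from the same cell of $\trop(X)$ (indeed in the paper's Figure \ref{fig:bitangents} both branches at $p$ come from different shift directions at nearby tangency data). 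The paper sidesteps all of this by working in $n=2$ only and invoking \cite{LM17}, which already handles the lifting multiplicity at a tropical bitangent by a case analysis at the level of $\trop(X)$. So your approach is interesting and plausibly the right framework for a general proof, but as written it has not closed the gap any further than the paper's own remark that this is ``the technical heart of the conjecture,'' and it does not recover even the plane-curve case without the missing off-diagonal lifting argument.
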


We prove the conjecture for plane curves that are generic in the sense of \cite[Assumption 3.3]{LM17}.
\begin{proof}[Proof sketch for generic plane curves]
Let $\Lambda$ be the tropical line dual to $p$. 
In order to use \cite[Theorem 3.1]{LM17}, we need to show that $\Lambda$ is tangent to $\trop(X)$ at two distinct points. 
Let $e',e''$ be the direction vectors of the two branches of $\trop(X^*)$ at $p$. Since $\trop(X)$ is assumed to be smooth, at least one the vectors, say $e'$,  must be in a standard direction. From the construction of the dual curve, $\Lambda$ is a shifted tangent of some point $q'$ along the direction $e'$. If $e''$ is in a standard direction as well, then $\Lambda$ is tangent to a point $q''$, shifted in direction $e''$ (similarly to Figure \ref{fig:bitangents}). Otherwise, $e''$ is not in a standard direction, and $\Lambda$ is tangent to $\trop(X)$ at $q''=-p$. In any case, the two tangency points $q'$ and $q''$ are distinct.

There are two cases to consider: either $e''$ is in a standard direction and $\Lambda$ is shifted from $q''$, or $e''$ is in a non-standard direction and $\Lambda$ is not shifted. In each of these cases, it is straightforward to check that the intersection multiplicity coincides with the lifting multiplicity from loc.~cit.

\end{proof}
\noindent We leave further investigation of this phenomenon for future work.

\begin{figure}
\centering
\begin{tikzpicture}[scale=.7]

%{\tiny \node [above left] at (0,0) {$(0,0)$};}
%\draw[fill] (0,0) circle [radius=0.05];

\draw (-3,-1) -- (-1,1) -- (0,1) -- (1,0) -- (1,-1) -- (-1,-3);
\draw (-1,1) -- (-1,2);
\draw (0,1) -- (0,2);
\draw (1,0) -- (2,0);
\draw (1,-1) -- (2,-1);

\node at (0,-4) {$\trop(X)$};

%% Bitangent line

\draw [blue,dashed] (1,1)--(-3,1);
\draw [blue,dashed] (1,1) -- (1,-3);
\draw [blue,dashed] (1,1)--(2,2);
{\tiny\node [blue, above left] at (1,1) {$\Lambda$};}

\begin{scope}[shift={(10,0)}, xscale=-1,yscale=-1]
\node at (0,4) {$\trop(X^*)$};
{\tiny \node [above left] at (1,1) {$p$}; }

\draw [line width=2] (-3,-1) -- (-1,1); 
\draw [line width=2] (-1,1) -- (3,1); 
\draw [line width=2] (-1,1) -- (-1,3); 
\draw [line width=2] (0,1) -- (0,3); 

\draw [line width=2] (1,0) -- (3,0); 
\draw [line width=2] (1,0) -- (1,-1); 
\draw [line width=2] (1,0) -- (1,3); 
\draw [line width=2] (1,-1) -- (3,-1); 
\draw [line width=2] (1,-1) -- (0,-2); 

\draw [] (0,1) -- (-2,-1); 
\draw [] (0,1) -- (1,0); 
\draw [] (1,0) -- (-1,-2); 

\draw[color=blue, fill=blue] (1,1) circle [radius=0.15];

\end{scope}

\end{tikzpicture}   

\caption{A bitangent of a tropical curve corresponding to a node of the dual.}
\label{fig:bitangents}

\end{figure}
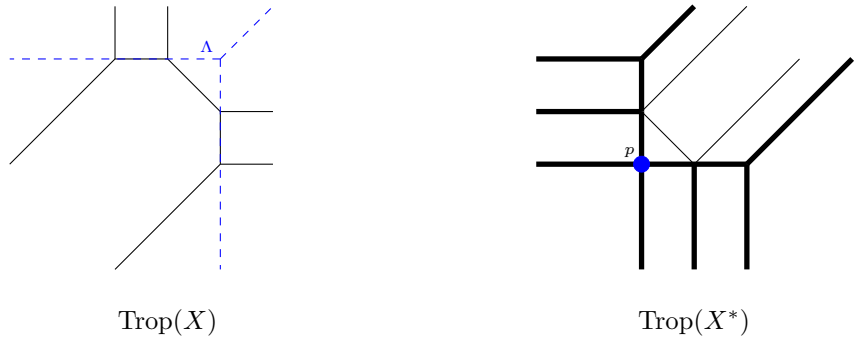

\subsection{Beyond surfaces}
It is natural to try to extend our results beyond the case of curves and surfaces. However, moving from curves to surfaces already led to a much more complicated result. We anticipate that for higher dimensional hypersurfaces, things become even more complicated.

Nonetheless, one could still try to apply the framework of \S \ref{sec:consistency} and \S \ref{sec:lifting} to describe the tropical dual of a hypersurface.
In general, looking at $k$-cells in $\trop(X)$ may require that we consider consistent sequences with $k+1$ terms in them, which will require understanding $\A_i$ and $\nu_i$ for $i\leq k$. While we have good control of $\A_1$ through \S\ref{sec:NOT}, understanding even $\A_2$ can be much more subtle. In the case of surfaces we managed to describe $\trop(X^*)$ without a complete understanding of $\A_2$; for $2$-faces in higher dimensional hypersurfaces, this will not be possible.

Instead of working with $\trop(X)$, one could instead take as input the tropicalization of the polynomial $f$. This would perhaps make understanding $\A_i$ and $\nu_i$ more straightforward. However, there is an even more fundamental difficulty:
Proposition \ref{prop:consistency} gives bounds on the valuations of not the $z_i$ (and thus the $y_i$), but on the linear forms $z_i^{(k)}$. One may use these bounds to deduce bounds on the $z_i$ as well (as we have done for curves and surfaces) but in higher dimensions, situations can occur in which the resulting inequalities on the $s_i$ give a subset of $\RR^n$ which has dimension $n$. This means that these inequalities alone do not always give sufficient criteria for the existence of a tropical tangent.

\subsection{Higher codimension}
Instead of increasing the dimension of $X$, it is natural to try to generalize our results by increasing the \emph{codimension} of $X$. On the one hand, the system of equations describing the conormal variety becomes more complicated. On the other hand, if we e.g.~restrict our attention to the case where $X$ is a curve, the combinatorial structure of $\trop(X)$ is still quite manageable. 

A natural starting point would be the case of space curves. This would be especially interesting from the point of view of tropical duality as an involution, since as we saw in Example \ref{ex:cone}, even for a smooth tropical surface $\trop(X)$ in $\RR^3$, $\trop(X^*)$ might be a curve.

\subsection{Multiplicities}
In \S\ref{sec:mult}, we have described a framework for calculating the multiplicities of $\trop(X^*)$, and applied it in the case of plane curves. It would be very interesting to extend this to a description of the multiplicities in the case with $X\subset \PP^3$  a surface. 

Armed with our description of $\trop(X^*)$ along with the multiplicities, one would be able to describe the Newton polytope of $X^*$. We believe that a (more complicated) version of the statement of Corollary \ref{cor:newton} should also hold for surfaces. Lemma \ref{lemma:constant} applies to hypersurfaces in arbitrary dimension, so given any hypersurface $X$ which is sufficiently generic with respect to its Newton polytope $\Delta$, there is some rule which will produce the Newton polytope of $X^*$; it is just a question of describing this rule explicitly.

\bibliographystyle{alpha}

\bibliography{paper}
\end{document}